\newtheorem{theorem}{Theorem}[section]
\newtheorem{proposition}[theorem]{Proposition}
\newtheorem{lemma}[theorem]{Lemma}
\newtheorem{corollary}[theorem]{Corollary}
\newtheorem{proof}{\textmd{\textit{Proof.}}}
\newtheorem{remark}[theorem]{Remark}
\newtheorem{example}[theorem]{Example}
\newtheorem{definition}[theorem]{Definition}
\newtheorem{step}{Step}
\newcommand{\qedd}{\hfill \Box}
\newcommand{\ve}{\varepsilon}
\newcommand{\del}{\partial}
\newcommand{\lra}{\longrightarrow}
\newcommand{\e}{\mathrm{e}}
\newcommand{\N}{\ensuremath{\mathbb{N}}}
\newcommand{\Z}{\ensuremath{\mathbb{Z}}}
\newcommand{\R}{\ensuremath{\mathbb{R}}}
\newcommand{\Sph}{\ensuremath{\mathbb{S}}}
\newcommand{\cC}{\ensuremath{\mathcal{C}}}
\newcommand{\cG}{\ensuremath{\mathcal{G}}}
\newcommand{\cI}{\ensuremath{\mathcal{I}}}
\newcommand{\cL}{\ensuremath{\mathcal{L}}}
\newcommand{\cP}{\ensuremath{\mathcal{P}}}
\newcommand{\fm}{\ensuremath{\mathfrak{m}}}
\newcommand{\fv}{\ensuremath{\mathfrak{v}}}
\newcommand{\fL}{\ensuremath{\mathfrak{L}}}
\newcommand{\bs}{\ensuremath{\mathbf{s}}}
\newcommand{\bJ}{\ensuremath{\mathbf{J}}}
\def\esssup{\mathop{\mathrm{ess\, sup}}}
\def\id{\mathop{\mathrm{id}}\nolimits}
\def\vol{\mathop{\mathrm{vol}}\nolimits}
\def\diam{\mathop{\mathrm{diam}}\nolimits}
\def\supp{\mathop{\mathrm{supp}}\nolimits}
\def\Ric{\mathop{\mathrm{Ric}}\nolimits}
\def\CD{\mathop{\mathrm{CD}}\nolimits}
\def\CDD{\mathop{\mathrm{CDD}}\nolimits}
\def\RCD{\mathop{\mathrm{RCD}}\nolimits}
\def\Ent{\mathop{\mathrm{Ent}}\nolimits}
\def\Image{\mathop{\mathsf{Image}}\nolimits}
\def\Lip{\mathop{\mathsf{Lip}}\nolimits}
\def\Dom{\mathop{\mathsf{Dom}}\nolimits}
\newcommand{\rev}[1]{\overleftarrow{#1}}
\title{Needle decompositions and isoperimetric inequalities in Finsler geometry}
\author{Shin-ichi Ohta\thanks{Department of Mathematics, Kyoto University,
Kyoto 606-8502, Japan ({\sf sohta@math.kyoto-u.ac.jp});
Supported in part by JSPS Grant-in-Aid for Scientific Research (KAKENHI) 15K04844.}}
\date{}
\begin{document}

\maketitle

\vspace{-7mm}
\begin{center}
{\it  Dedicated to Professor Takao Yamaguchi
on the occasion of his 60th birthday}
\end{center}
\smallskip

\begin{abstract}
Klartag recently gave a beautiful alternative proof of the isoperimetric inequalities
of L\'evy--Gromov, Bakry--Ledoux, Bayle and E.~Milman on weighted Riemannian manifolds.
Klartag's approach is based on a generalization of the localization method
(so-called needle decompositions) in convex geometry,
inspired also by optimal transport theory.
Cavalletti and Mondino subsequently generalized the localization method,
in a different way more directly along optimal transport theory,
to essentially non-branching metric measure spaces satisfying the curvature-dimension condition.
This class in particular includes reversible (absolutely homogeneous) Finsler manifolds.
In this paper, we construct needle decompositions of non-reversible
(only positively homogeneous) Finsler manifolds,
and show an isoperimetric inequality under bounded reversibility constants.
A discussion on the curvature-dimension condition $\CD(K,N)$
for $N=0$ is also included, it would be of independent interest.
\medskip

\noindent
{\it Mathematics Subject Classication (2010)}: 53C60, 49Q20
\newline
{\it Key Words and Phrases}: Finsler geometry, Ricci curvature, localization, isoperimetric inequality
\end{abstract}

\renewcommand{\contentsname}{{\large Contents}}
{\small
\tableofcontents
}

\section{Introduction}

In a recent paper \cite{Kl}, Klartag gave a beautiful alternative proof of the isoperimetric inequalities
of L\'evy--Gromov (\cite{Lev1,Lev2}, \cite[Appendix~C]{Gr}),
Bakry--Ledoux~\cite{BL}, Bayle~\cite{Bay} and E.~Milman~\cite{Misharp,Mineg}
on weighted Riemannian manifolds with lower Ricci curvature bounds.
His idea, a generalization of the deep \emph{localization method}
\`a la Payne--Weinberger~\cite{PW}, Gromov--Milman~\cite{GM},
Kannan--Lov\'asz--Simonovits~\cite{LS,KLS} in convex geometry,
is reducing the inequality to those on geodesics (called \emph{needles})
forming a geodesic foliation of the space.
Then we apply the one-dimensional isoperimetric inequality intensively studied
in \cite{Misharp,Mineg}.
A crucial point of Klartag's argument is that it does not depend on
the deep regularity theory of isoperimetric minimizers in geometric measure theory
(due to Almgren, Federer, Morgan et al, see \cite{Mo}),
that had played an irreplaceable role in the study of isoperimetric inequalities
under lower Ricci curvature bounds.
This technique also provides (geometric) functional inequalities
such as the Brunn--Minkowski inequality.

Let us briefly explain how to construct a needle decomposition
associated with a $1$-Lipschitz function $\varphi$ on a Riemannian manifold $M$
in the manner we will adopt in \S \ref{sc:needle}.
We call a unit speed geodesic $\gamma:I \lra M$, on a closed interval $I \subset \R$,
a \emph{transport ray} if
\[ \varphi\big( \gamma(t) \big) -\varphi \big( \gamma(s) \big)=t-s \qquad
 \text{for all}\ s,t \in I\ \text{with}\ s<t. \]
Decompose $M$ as $M=\bm{D}_{\varphi} \sqcup \bm{T}_{\varphi} \sqcup \bm{B}_{\varphi}$,
where for $x \in \bm{D}_{\varphi}$ there is no non-constant transport ray including $x$,
and for $x \in \bm{T}_{\varphi}$ (resp.\ $x \in \bm{B}_{\varphi}$)
we have exactly one (resp.\ more than two) transport ray passing through $x$.
The \emph{transport set} $\bm{T}_{\varphi}$ is of our main interest.
The set $\bm{R}_{\varphi}$ of non-constant transport rays
can be regarded as a quotient space $\bm{R}_{\varphi}=\bm{T}_{\varphi}/\sim$.
Equip $M$ with a weighted measure $\fm$ satisfying $\Ric_N \ge K$
($K \in \R$ and $N \in (-\infty,0] \cup [\dim M,\infty]$,
see Definition~\ref{df:wRic} for the definition of $\Ric_N$).
Pushing $\fm|_{\bm{T}_{\varphi}}$ forward to the measure $\fv$ on $\bm{R}_{\varphi}$,
we have a disintegration $\fm|_{\bm{T}_{\varphi}}=\bm{\mu}_{\gamma} \,\fv(d\gamma)$.
Here $\bm{\mu}_{\gamma}$ is regarded as a measure on the domain of
the geodesic $\gamma \in \bm{R}_{\varphi}$,
and enjoys the same curvature bound $\Ric_N \ge K$.
Now the analysis of $(M,\fm)$ is reduced to
the one-dimensional analysis of $\bm{\mu}_{\gamma}$
via the integration with respect to $\fv$.

As stressed in \cite{Kl}, the above construction is closely related to
optimal transport theory for the $L^1$-cost function $c(x,y):=d(x,y)$.
We especially refer to \cite{BC,Ca,Ca2} for studies in metric measure spaces.
In fact, the construction in \cite{BC,Ca,Ca2} applies to less smooth spaces than Klartag's approach
(for instance, it seems difficult to extend
Whitney's extension theorem and the $\cC^{1,1}$-calculus in \cite{Kl}
to non-smooth metric measure spaces).
Developing in this way, Cavalletti and Mondino~\cite{CM} generalized
the localization method to essentially non-branching metric measure spaces 
satisfying Lott--Sturm--Villani's \emph{curvature-dimension condition} $\CD(K,N)$
for $K \in \R$ and $N \in (1,\infty)$
(precisely, the slightly weaker
\emph{reduced curvature-dimension condition} $\CD^*(K,N)$ is enough).
The class of essentially non-branching $\CD^*(K,N)$-spaces includes
limits of (weighted) Riemannian manifolds, finite-dimensional Alexandrov spaces,
metric measure spaces satisfying the \emph{Riemannian curvature-dimension condition}
(see \cite{AGSrcd,AGMR,EKS}),
and reversible Finsler manifolds (with appropriate lower curvature bounds).
For all of these spaces, the isoperimetric inequality obtained in \cite{CM} as an application
had been previously unknown.
Some functional inequalities are also studied in the subsequent paper \cite{CM2}.

The aim of this article is to further extend the localization method to
\emph{non-reversible} Finsler manifolds.
A Finsler manifold $(M,F)$ is a couple of a manifold $M$
and a non-negative $\cC^{\infty}$-function $F:TM \lra [0,\infty)$
giving a Minkowski norm on each tangent space $T_xM$
(see \S \ref{ssc:Fgeom} for the precise definition).
We say that $F$ (or $(M,F)$) is \emph{reversible} if $F(-v)=F(v)$ for all $v \in TM$.
The reversibility is equivalent to the symmetry of the associated distance function:
$d(y,x)=d(x,y)$ for all $x,y \in M$.
In many situations, non-reversible Finsler manifolds behave
equally well as reversible ones.
For instance, when we equip $(M,F)$ with a positive $\cC^{\infty}$-measure $\fm$ on $M$,
the weighted Ricci curvature bound $\Ric_N \ge K$ is equivalent to
the curvature-dimension condition $\CD(K,N)$
(see \cite{Oint} for $N \in [\dim M,\infty]$, \cite{Oneg} for $N \in (-\infty,0)$,
and \S \ref{sc:CD} in this paper for $N=0$).

For needle decompositions,
although it is unclear whether Klartag's original construction is extended to Finsler manifolds,
the more abstract way due to Cavalletti et al~\cite{BC,Ca,CM} is available.

\begin{theorem}[Needle decompositions]\label{th:needle}
Let $(M,F)$ be a connected, forward and backward complete,
$n$-dimensional $\cC^{\infty}$-Finsler manifold
of $\del M=\emptyset$ and $n \ge 1$,
endowed with a positive $\cC^{\infty}$-measure $\fm$ on $M$.
\begin{enumerate}[{\rm (i)}]
\item
Given a $1$-Lipschitz function $\varphi:M \lra \R$, we have a decomposition
$M=\bm{D}_{\varphi} \sqcup \bm{T}_{\varphi} \sqcup \bm{B}_{\varphi}$
such that $\fm(\bm{B}_{\varphi})=0$ and that the set $\bm{R}_{\varphi}$
of non-constant transport rays is identified with a quotient $\bm{T}_{\varphi}/\sim$.
Moreover, there exists a measure $\fv$ on $\bm{R}_{\varphi}$ satisfying
\[ \fm|_{\bm{T}_{\varphi}}=\bm{\mu}_{\gamma} \,\fv(d\gamma), \]
where $\bm{\mu}_{\gamma}$ is a probability measure on $\Image(\gamma) \subset M$
for $\fv$-almost all $\gamma \in \bm{R}_{\varphi}$.

\item
For $\fv$-almost every $\gamma \in \bm{R}_{\varphi}$,
we have $\supp\bm{\mu}_{\gamma}=\Image(\gamma)$ and
$\bm{\mu}_{\gamma}$ is absolutely continuous with respect to
the one-dimensional Lebesgue measure $\fL^1$ on $\Dom(\gamma) \subset \R$
$($identified with $\Image(\gamma))$ with a continuous density function $\rho_{\gamma}$,
namely $\bm{\mu}_{\gamma}=\rho_{\gamma} \cdot \fL^1|_{\Dom(\gamma)}$.

\item
If in addition $(M,F,\fm)$ satisfies $\Ric_N \ge K$ for $K \in \R$ and $N \in (-\infty,0] \cup [n,\infty]$
$($with $N \neq 1$ when $n=1)$,
then for $\fv$-almost every $\gamma \in \bm{R}_{\varphi}$
the density function $\rho_{\gamma}$ satisfies
\[ \rho_{\gamma}\big( (1-\lambda)s+\lambda t \big)
 \ge \left\{ \bm{\sigma}_{K/(N-1)}^{(1-\lambda)}(t-s) \rho_{\gamma}(s)^{1/(N-1)}
 +\bm{\sigma}_{K/(N-1)}^{(\lambda)}(t-s) \rho_{\gamma}(t)^{1/(N-1)} \right\}^{N-1} \]
if $N \in (-\infty,0] \cup [n,\infty);$ and
\[ \log\rho_{\gamma}\big( (1-\lambda)s+\lambda t \big)
 \ge (1-\lambda) \log\rho_{\gamma}(s) +\lambda \log\rho_{\gamma}(t)
 +\frac{K}{2}(1-\lambda)\lambda (t-s)^2 \]
if $N=\infty;$
both for all $a<s<t<b$ with $a,b \in \Dom(\gamma)$ and $\lambda \in (0,1)$.
\end{enumerate}
\end{theorem}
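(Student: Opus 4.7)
The plan is to adapt the optimal-transport-based localization strategy of Bianchini--Cavalletti~\cite{BC} and Cavalletti--Mondino~\cite{CM}, developed for essentially non-branching $\CD^*(K,N)$-spaces, to the non-reversible Finsler setting. The principal novelty is the asymmetry of the Finsler distance: transport rays carry a natural orientation (in the direction of increasing $\varphi$), so every measurable and variational construction must be set up one-sidedly, with the reverse Finsler structure $\rev{F}(v):=F(-v)$ standing ready whenever symmetric arguments in the prior literature need to be traded for oriented ones.

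For (i), I would introduce the asymmetric transport relation on $M\times M$ by declaring $(x,y)\in \Gamma$ iff $\varphi(x)-\varphi(y)=d(y,x)$, which picks out pairs joined by a segment of an oriented transport ray, and define $\bm{T}_\varphi$, $\bm{D}_\varphi$, $\bm{B}_\varphi$ by multiplicity of rays through a point. Showing $\fm(\bm{B}_\varphi)=0$ is carried out by adapting the classical argument that transport rays cannot branch in their relative interior; this uses smooth uniqueness of Finsler geodesics in place of the essentially non-branching hypothesis of \cite{CM}. Once $\bm{B}_\varphi$ is negligible, transport rays partition $\bm{T}_\varphi$ measurably, and Fremlin's disintegration theorem (applied to the Borel quotient $\bm{T}_\varphi\to\bm{R}_\varphi$) yields $\fm|_{\bm{T}_\varphi}=\bm{\mu}_\gamma\,\fv(d\gamma)$.

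For (ii), I would build local coordinates in a tubular neighborhood of each transport ray via the forward Finsler exponential map from a smooth transversal slice. Fubini in these coordinates, together with the smooth, positive Jacobian of $\exp$, produces absolute continuity of $\bm{\mu}_\gamma$ with a continuous density $\rho_\gamma$; the full-support statement follows from the maximality of the transport ray. For (iii), the strategy is to inherit the $\CD(K,N)$ inequality from $(M,F,\fm)$ on each needle. Using the equivalence of $\CD(K,N)$ and $\Ric_N\ge K$ in the Finsler setting (by \cite{Oint,Oneg} and Section~\ref{sc:CD} for $N=0$), I would apply the displacement convexity of the R\'enyi entropy to $L^1$-optimal couplings concentrated in thin tubes around $\gamma$. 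As the tube radius shrinks, the ambient multidimensional convexity estimate reduces to the desired one-dimensional concavity inequality on $\rho_\gamma$ with distortion coefficients $\bm{\sigma}_{K/(N-1)}^{(\lambda)}$; the case $N=\infty$ then follows either by letting $N\to\infty$ or via the entropic formulation.

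The main obstacle I anticipate is the $\fm$-negligibility of $\bm{B}_\varphi$ in (i). The proofs in \cite{BC,CM} combine essential non-branching with $c$-cyclical monotonicity arguments that implicitly swap the roles of source and target, a swap which is legitimate only when $d$ is symmetric. Rewriting these arguments in terms of the asymmetric cost $d$ and its reverse requires care, and this is precisely the step that locks in the orientation of rays, which then propagates into (iii) through the oriented distortion coefficients inherent to $\CD(K,N)$ for non-reversible Finsler manifolds. A secondary subtlety is to verify that the orientation along $\gamma$ dictated by $\varphi$ is the one with respect to which $\Ric_N(\dot\gamma)\ge K$ is applied, so that the needle inequality is stated in the correct direction.
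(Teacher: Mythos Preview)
Your outline for (i) is broadly in line with the paper, though your one-line description of why $\fm(\bm{B}_\varphi)=0$ is a bit off: interior non-branching of geodesics (Lemma~\ref{lm:diff}) only tells you that points of $\bm{B}_\varphi$ are ray \emph{endpoints}, not that they form a null set. The paper's actual argument (Proposition~\ref{pr:Bnull}) is the Cavalletti-style one you allude to in your final paragraph: if $\fm(\bm{B}_\varphi^+)>0$, a measurable selection of two distinct ray endpoints per branching point produces a $d^2$-cyclically monotone set (via Lemma~\ref{lm:d^2}) that is not a graph, contradicting the Brenier--McCann theorem. You correctly flag the asymmetry issue; the paper handles it by the extra hypothesis $\sup_\Xi\varphi(x)\le\inf_\Xi\varphi(y)$ in Lemma~\ref{lm:d^2}, and treats $\bm{B}_\varphi^-$ by passing to $(-\varphi,\rev{F})$.

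For (ii) you propose a Klartag-style Fubini argument via the exponential map from a smooth transversal slice. This is a genuinely different route from the paper, which instead uses the measure contraction property (Lemma~\ref{lm:MCPbd}, Proposition~\ref{pr:ac}) to prove absolute continuity and local Lipschitz regularity of $\rho_\gamma$. Your approach may run into trouble: the ray foliation of $\bm{T}_\varphi$ is only Borel-measurable in general, so there is no a priori smooth transversal, and the $\cC^{1,1}$-calculus Klartag uses in the Riemannian case is not known to carry over to Finsler manifolds (the paper remarks on this explicitly in the introduction). The MCP route sidesteps regularity entirely and works with the purely measurable ray map $\cG$ of Definition~\ref{df:raymap}.

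For (iii) there is a real gap. First, the transports you localize along should be $d^2$-optimal ($W_2$-geodesics), not $L^1$-optimal; $\CD(K,N)$ is a $W_2$ statement. More importantly, a naive localization of the $\CD(K,N)$ inequality along needles yields only the weaker bound with $\bm{\tau}^{(\lambda)}_{K,N}$ coefficients, not the claimed $\bm{\sigma}^{(\lambda)}_{K/(N-1)}$ coefficients (see Remark~\ref{rm:CDbd}(a)). The paper, following \cite{CM}, introduces an extra scaling parameter $c>0$ in the construction of the test measures $\mu_\lambda$ (the target interval is dilated by $c$ relative to the source), obtains the inequality \eqref{eq:CDwbd} with $c$ free, and then \emph{optimizes over $c$} to upgrade $\bm{\tau}$ to $\bm{\sigma}$. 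Without this step your needle inequality in (iii) would be strictly weaker than stated, and in particular would not recover the sharp one-dimensional $\Ric_N\ge K$ on the needle (Remark~\ref{rm:CDbd}(b)). The test measures are also not ``thin tubes around $\gamma$'' but measures supported on the image of the ray map $\cG$ itself, i.e.\ already one-dimensional along the foliation.
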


The inequalities in (iii) means that each needle $(\Image(\gamma),F,\bm{\mu}_{\gamma})$
again satisfies $\Ric_N \ge K$.
We remark that the special case of $n=N=1$ is reduced to $\R$ or $\Sph^1$
equipped with the Lebesgue measure (hence $\Ric_1 \equiv 0$)
and easily analyzed (see Remark~\ref{rm:wRic}(c)).

We prove (i) in \S \ref{sc:needle}, (ii) in \S \ref{ssc:qual}, and (iii) in \S \ref{ssc:quan}.
Notice that no information of $\bm{D}_{\varphi}$ (which is in general a large set)
is obtained from Theorem~\ref{th:needle}.
In applications one can ignore $\bm{D}_{\varphi}$ thanks to the following version of
needle decompositions \emph{conditioned by mean-zero functions} (see \S \ref{sc:mean0}).

\begin{theorem}[Needle decompositions conditioned by mean-zero functions]\label{th:mean0}
$\,$\linebreak
Let $(M,F,\fm)$ be as in Theorem~$\ref{th:needle}$ and
$f \in L^1(M;\fm)$ satisfy $\int_M f \,d\fm=0$ and
\[ \int_M |f(x)| \{d(x_0,x)+d(x,x_0)\} \,\fm(dx)<\infty \qquad
 \text{for some}\ x_0 \in M. \]
Take a $1$-Lipschitz function $\varphi:M \lra \R$ maximizing the integral
$\int_M f\varphi \,d\fm$ among all $1$-Lipschitz functions on $M$.
Then the needle decomposition given by Theorem~$\ref{th:needle}$ satisfies
\[ \int_{\Image(\gamma)} f \,d\bm{\mu}_{\gamma}=0 \]
for $\fv$-almost all $\gamma \in \bm{R}_{\varphi}$,
and $f \equiv 0$ $\fm$-almost everywhere on $\bm{D}_{\varphi}$.
\end{theorem}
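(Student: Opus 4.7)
The plan is to interpret the 1-Lipschitz maximizer $\varphi$ as a Kantorovich potential for the $L^1$-optimal transport between $\mu_+ := f_+\fm$ and $\mu_- := f_-\fm$, and to read off the needle-by-needle mean-zero property from the structure of an associated optimal plan.

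First I would establish existence of $\varphi$. Normalizing $\varphi(x_0)=0$, the forward Lipschitz condition $\varphi(y)-\varphi(x) \le d(x,y)$ yields $|\varphi(x)| \le d(x_0,x)+d(x,x_0)$, and the integrability assumption on $|f|$ makes $\varphi \mapsto \int_M f\varphi\,d\fm$ finite and upper semicontinuous under locally uniform convergence; Arzel\`a--Ascoli on a compact exhaustion of $M$ then produces a maximizer. Since $\int f\,d\fm=0$, the measures $\mu_\pm$ are finite and of equal mass. Kantorovich--Rubinstein duality for the lower semicontinuous asymmetric cost $c(x,y) := d(y,x)$ on the Polish space $M \times M$ gives
\[
 \sup_{\varphi\ \text{1-Lipschitz}} \int_M f\varphi\,d\fm
 \;=\; \min_{\pi \in \Pi(\mu_+,\mu_-)} \int_{M\times M} d(y,x)\,d\pi(x,y),
\]
and an optimal plan $\pi^*$ is concentrated on $\{(x,y) : \varphi(x)-\varphi(y) = d(y,x)\}$. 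In particular, whenever $(x,y) \in \supp\pi^*$ with $x \ne y$, the unit-speed forward geodesic from $y$ to $x$ is a transport ray of $\varphi$, so $x$ and $y$ lie on a common non-constant ray.

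If $(x,y) \in \supp\pi^*$ meets $\bm{D}_\varphi$, the support condition forces $x = y$, because no non-constant transport ray passes through a point of $\bm{D}_\varphi$. Hence $\pi^*|_{(\bm{D}_\varphi \times M) \cup (M \times \bm{D}_\varphi)}$ lives on the diagonal, and comparing marginals yields $\mu_+|_{\bm{D}_\varphi} = \mu_-|_{\bm{D}_\varphi}$, i.e., $f \equiv 0$ $\fm$-a.e.\ on $\bm{D}_\varphi$. Let $Q : \bm{T}_\varphi \to \bm{R}_\varphi$ be the quotient map. Modulo the $\fm$-null set $\bm{B}_\varphi$, which is $\mu_\pm$-null since $\mu_\pm \ll \fm$, the residual mass of $\pi^*$ is concentrated on $\{(x,y) \in \bm{T}_\varphi \times \bm{T}_\varphi : Q(x)=Q(y)\}$. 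Pushing the two marginals of $\pi^*$ forward by $Q$ therefore gives
\[
 Q_\sharp\bigl(\mu_+|_{\bm{T}_\varphi}\bigr) \;=\; Q_\sharp\bigl(\mu_-|_{\bm{T}_\varphi}\bigr),
\]
equivalently $Q_\sharp(f\,\fm|_{\bm{T}_\varphi}) = 0$. Combining with the disintegration $\fm|_{\bm{T}_\varphi} = \bm{\mu}_\gamma\,\fv(d\gamma)$ from Theorem~\ref{th:needle}(i), this says exactly $\int_{\Image(\gamma)} f\,d\bm{\mu}_\gamma = 0$ for $\fv$-almost every $\gamma$, which is the desired conclusion.

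The principal source of friction will be the asymmetry of $d$: one has to pair the forward convention of 1-Lipschitz with the cost $c(x,y) = d(y,x)$ (rather than $d(x,y)$) and reverify that the standard Kantorovich--Rubinstein duality together with its characterization of the support of optimal plans via $c$-cyclical monotonicity carries over to this lower semicontinuous non-symmetric $c$. This is essentially bookkeeping, but it is the only place where non-reversibility genuinely intervenes in the argument.
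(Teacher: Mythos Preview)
Your argument is correct and takes a genuinely different route from the paper's. Both arrive at the same intermediate conclusion --- that $\int_A f\,d\fm = 0$ for every saturated set $A$ associated with $\varphi$, equivalently $Q_\sharp(f\fm|_{\bm{T}_\varphi})=0$ --- but by different means. The paper, following Evans--Gangbo and Klartag, perturbs $\varphi$ by the Hopf--Lax type competitor $\varphi_\delta(x)=\inf_y\{\varphi(y)+d(y,x)-\delta\chi_Z(y)\}$ for compact $Z$, extracts the limit $\Phi=\lim_{\delta\downarrow 0}(\varphi-\varphi_\delta)/\delta$ (which is $1$ on $Z$ and $0$ outside $S(Z)$), and reads off $\int_A f\,d\fm\ge 0$ directly from the maximality of $\varphi$; the reverse inequality comes from repeating the argument for $(-f,-\varphi)$ on $(M,\rev{F},\fm)$. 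This is entirely self-contained and never constructs an optimal coupling. Your approach instead materializes an $L^1$-optimal plan $\pi^*\in\Pi(f_+\fm,f_-\fm)$, uses complementary slackness to confine $\supp\pi^*$ to pairs lying on a common transport ray, and then compares the two marginals after projecting by $Q$. This is conceptually cleaner and makes the link to Kantorovich--Rubinstein duality explicit --- a link the paper records as \eqref{eq:KR} but explicitly declines to use --- at the cost of importing the full duality theorem and checking that the reduction of the dual problem to a single $1$-Lipschitz potential and the no-gap statement go through for the asymmetric cost $c(x,y)=d(y,x)$. As you anticipate, this last verification is routine given that $d$ is continuous on a Polish space and the moment hypothesis on $f$ supplies a coupling of finite cost.
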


In particular, if $f$ is never being $0$
(which is the case in our application to isoperimetric inequalities),
then Theorem~\ref{th:mean0} ensures $\fm(\bm{D}_{\varphi})=0$.

In order to state the isoperimetric inequalities, we need some notations.
Suppose $\fm(M)<\infty$ and normalize $\fm$ as $\fm(M)=1$
(such a normalization does not change $\Ric_N$).
For a Borel set $A \subset M$,
define an analogue of the \emph{Minkowski exterior boundary measure} as
\[ \fm^+(A):=\liminf_{\ve \downarrow 0} \frac{\fm(B^+(A,\ve))-\fm(A)}{\ve}, \]
where $B^+(A,\ve):=\{ y \in M \,|\, \inf_{x \in A}d(x,y)<\ve \}$
is the forward $\ve$-neighborhood of $A$.
Then the \emph{isoperimetric profile} $\cI_{(M,F,\fm)}:[0,1] \lra [0,\infty]$
of $(M,F,\fm)$ is defined by, for $\theta \in [0,1]$,
\begin{equation}\label{eq:I}
\cI_{(M,F,\fm)}(\theta):=\inf\{\fm^+(A) \,|\, A \subset M \text{: Borel sets with}\ \fm(A)=\theta\}.
\end{equation}
We have $\cI_{(M,F,\fm)}(0)=\cI_{(M,F,\fm)}(1)=0$
by taking $A=\emptyset$ and $A=M$, respectively.
The isoperimetric profile of (weighted) Riemannian manifolds is a classical research object,
and was intensively studied by E.~Milman~\cite{Misharp,Mineg}
under the combination of $\Ric_N \ge K$ and $\diam M :=\sup_{x,y \in M}d(x,y) \le D$
for $K \in \R$, $N \in (-\infty,1) \cup [n,\infty]$, $D \in (0,\infty]$
(so-called the \emph{curvature-dimension-diameter condition} $\CDD(K,N,D)$).
He showed that weighted Riemannian manifolds with $\Ric_N \ge K$ and $\diam M \le D$
enjoy the same isoperimetric inequality
\begin{equation}\label{eq:M-isop}
\cI_{(M,g,\fm)}(\theta) \ge \cI_{K,N,D}(\theta) \qquad \text{for all}\ \theta \in [0,1]
\end{equation}
regardless the dimension $n$ of the spaces
(we remark that, to be precise, the case of $N \in (0,1)$ and $D<\infty$
was excluded in \cite{Mineg}, see \cite[Remark~1.5]{Mineg}).
Furthermore, \eqref{eq:M-isop} is sharp in all parameters $K$, $N$ and $D$ in all dimensions $n$.
In fact, in \cite{Misharp,Mineg} the precise formula of $\cI_{K,N,D}$
in terms of the isoperimetric profile $\cI^{\flat}$ on intervals $I$
tested for $A \subset I$ such that $\del A$ is a point
(that is, $A=[0,a)$ or $(a,D]$ if $I=[0,D]$)
and model spaces assuring the sharpness are given.
See \cite{Misharp,Mineg} for the precise formula of $\cI_{K,N,D}$,
here we mention only the two classical cases:
\[ \cI_{K,N,D}(\theta)
 =\frac{\sin(\sqrt{K/(N-1)}R(\theta))^{N-1}}{\int_0^{\pi\sqrt{(N-1)/K}} \sin(\sqrt{K/(N-1)}r)^{N-1} \,dr} \]
for $K>0$, $N \in [n,\infty)$ and $D \ge \pi \sqrt{(N-1)/K}$,
where $R(\theta) \in [0,\pi\sqrt{(N-1)/K}]$ is given by
\[ \theta=\frac{\int_0^{R(\theta)} \sin(\sqrt{K/(N-1)}r)^{N-1} \,dr}
 {\int_0^{\pi\sqrt{(N-1)/K}} \sin(\sqrt{K/(N-1)}r)^{N-1} \,dr}; \]
and
\[ \cI_{K,\infty,\infty}(\theta) =\sqrt{\frac{K}{2\pi}} \e^{-Ka(\theta)^2/2}, \qquad
 \text{where}\ \theta=\int_{-\infty}^{a(\theta)} \sqrt{\frac{K}{2\pi}} \e^{-Ks^2/2} \,ds, \]
for $K>0$.
The first case corresponds to \emph{L\'evy--Gromov's isoperimetric inequality}
(extended by Bayle~\cite{Bay} to the weighted situation and non-integer $N$)
employing the spheres of constant curvature as model spaces.
The second case is \emph{Bakry--Ledoux's isoperimetric inequality},
where model spaces are Euclidean spaces with Gaussian distributions.
As an application of Theorems~\ref{th:needle}, \ref{th:mean0},
we give a Finsler version of \eqref{eq:M-isop} as follows.

\begin{theorem}[Isoperimetric inequality]\label{th:isop}
Let $(M,F,\fm)$ be as in Theorem~$\ref{th:needle}$ with $\fm(M)=1$ and
assume $\Ric_N \ge K$ and $\diam M \le D$
for some $K \in \R$, $N \in (-\infty,0] \cup [n,\infty]$ $($with $N \neq 1$ if $n=1)$
and $D \in (0,\infty]$ as well as
\[ \Lambda_{(M,F)} :=\sup_{v \in TM \setminus 0} \frac{F(-v)}{F(v)} <\infty. \]
Then we have
\begin{equation}\label{eq:Fisop}
\cI_{(M,F,\fm)}(\theta) \ge \Lambda_{(M,F)}^{-1} \cdot \cI_{K,N,D}(\theta)
\end{equation}
for all $\theta \in [0,1]$.
If $n=N=1$, then necessarily $K=0$, $D<\infty$ and we have for all $\theta \in (0,1)$
\[ \cI_{(M,F,\fm)}(\theta) =\frac{1+\Lambda_{(M,F)}}{D}. \]
\end{theorem}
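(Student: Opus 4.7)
The plan is to reduce Theorem~\ref{th:isop} to Milman's sharp one-dimensional isoperimetric inequality by applying the needle decomposition of Theorems~\ref{th:needle} and \ref{th:mean0} to a mean-zero function built from $A$. The constant $\Lambda_{(M,F)}$ will enter at precisely the one step where forward and backward Finsler distances along a needle must be compared.

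Fix $\theta \in (0,1)$ and a Borel set $A \subset M$ with $\fm(A) = \theta$ (the endpoints are trivial), and set $f := \mathbf{1}_A - \theta$, a bounded mean-zero function. When $D < \infty$ the weighted integrability hypothesis of Theorem~\ref{th:mean0} is immediate; when $D = \infty$ I would first truncate $A$ to its intersection with a forward ball $B^+(x_0, R)$, apply the argument below, and let $R \to \infty$. Since $f$ takes only the nonzero values $1 - \theta$ and $-\theta$, Theorem~\ref{th:mean0} produces a $1$-Lipschitz function $\varphi$ and a needle decomposition with $\fm(\bm{D}_{\varphi}) = 0 = \fm(\bm{B}_{\varphi})$ and the conditioning $\bm{\mu}_{\gamma}(A \cap \Image(\gamma)) = \theta$ for $\fv$-a.e.\ $\gamma$.

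The main obstacle is the Finsler-specific comparison between the forward $\varepsilon$-neighborhood of $A$ in $M$ and a one-dimensional neighborhood on each needle. Parametrize $\gamma : [a_{\gamma}, b_{\gamma}] \to M$ by forward arc length. Because a transport ray is forward-minimizing, for $s < t$ one has $d(\gamma(s), \gamma(t)) = t - s$, while reversing $\gamma$ and using $F(-v) \le \Lambda_{(M,F)} F(v)$ gives $d(\gamma(t), \gamma(s)) \le \Lambda_{(M,F)}(t - s)$. Writing $A_{\gamma} := \gamma^{-1}(A)$, this yields
\[
B^+(A, \varepsilon) \cap \Image(\gamma) \supset \gamma\big( A_{\gamma} + (-\Lambda_{(M,F)}^{-1}\varepsilon,\, \Lambda_{(M,F)}^{-1}\varepsilon) \big).
\]
Disintegrating, changing variable $\delta := \Lambda_{(M,F)}^{-1}\varepsilon$, and applying Fatou then produce
\[
\fm^+(A) \ge \Lambda_{(M,F)}^{-1} \int_{\bm{R}_{\varphi}} \bm{\mu}_{\gamma}^+(A_{\gamma})\, \fv(d\gamma),
\]
where $\bm{\mu}_{\gamma}^+$ denotes the standard one-dimensional Minkowski exterior boundary measure on $(\Dom(\gamma), \bm{\mu}_{\gamma})$.

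By Theorem~\ref{th:needle}(iii), each needle $(\Dom(\gamma), \bm{\mu}_{\gamma})$ is a one-dimensional $\CD(K,N)$-space of diameter at most $D$ satisfying $\bm{\mu}_{\gamma}(A_{\gamma}) = \theta$; the sharp one-dimensional isoperimetric inequality of E.~Milman (the very model-interval comparison defining $\cI_{K,N,D}$) therefore gives $\bm{\mu}_{\gamma}^+(A_{\gamma}) \ge \cI_{K,N,D}(\theta)$. Integrating against $\fv$ using $\fv(\bm{R}_{\varphi}) = \fm(\bm{T}_{\varphi}) = 1$ yields \eqref{eq:Fisop}. For the degenerate case $n = N = 1$, Remark~\ref{rm:wRic}(c) identifies $(M, F, \fm)$ with $\Sph^1$ carrying normalized Lebesgue measure; parametrizing by the forward arc length so the forward and backward circumferences are related by $\Lambda_{(M,F)}$, an explicit computation using $\diam M = D$ and the constancy of the density yields the claimed value $(1 + \Lambda_{(M,F)})/D$.
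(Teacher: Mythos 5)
Your proposal follows essentially the same route as the paper: take $f=\chi_A-\theta$, invoke the mean-zero needle decomposition so that $\bm{\mu}_{\gamma}(\gamma^{-1}(A))=\theta$ on $\fv$-almost every needle and $\fm(\bm{D}_{\varphi})=0$, extract the factor $\Lambda_{(M,F)}^{-1}$ from the one-sided comparison $d(\gamma(s),\gamma(t))=t-s$ versus $d(\gamma(t),\gamma(s))\le\Lambda_{(M,F)}(t-s)$, pass the liminf inside via Fubini and Fatou, and finish with Milman's one-dimensional inequality; the $n=N=1$ case is also handled as in the paper.

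Two small points. First, you apply Milman's one-dimensional result directly to $(\Dom(\gamma),|\cdot|,\bm{\mu}_{\gamma})$, but the density $\rho_{\gamma}$ is only known to be locally Lipschitz, whereas the inequalities of \cite{Misharp,Mineg} are formulated for smooth densities; the paper inserts a mollification step (its Lemma~\ref{lm:smooth}) showing that the convolution $(\rho_{\gamma}^{1/(N-1)}*\phi_{\ve})^{N-1}$ preserves the concavity \eqref{eq:CDbd}, hence satisfies $\psi''-(\psi')^2/(N-1)\ge K$, and then lets $\ve\downarrow 0$ using $\cI_{K,N,D+\ve}\to\cI_{K,N,D}$. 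You should include this (routine but genuinely needed) regularization. Second, your proposed fix for the integrability hypothesis of Theorem~\ref{th:mean0} when $D=\infty$ does not work as stated: truncating $A$ leaves $f=\chi_A-\theta$ equal to $-\theta\neq 0$ outside $A$, so $\int|f(x)|\{d(x_0,x)+d(x,x_0)\}\,\fm(dx)$ is finite only when $\fm$ itself has finite first moment. This is harmless in the cases where $\cI_{K,N,\infty}$ is nontrivial (e.g.\ $K>0$, $N=\infty$, where Gaussian concentration gives the moment bound, or $K>0$, $N\in[n,\infty)$, where Bonnet--Myers forces $D<\infty$), but the truncation as you describe it is not the right justification.
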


We call $\Lambda_{(M,F)}$ the \emph{reversibility constant}.
Clearly $\Lambda_{(M,F)} \ge 1$, and $\Lambda_{(M,F)}=1$ holds if and only if $F$ is reversible.
In the reversible case included in \cite{CM},
we have the sharp inequality $\cI_{(M,F,\fm)}(\theta) \ge \cI_{K,N,D}(\theta)$
same as the Riemannian case.
In the non-reversible case, however, needle decompositions seem to have
a limited strength and gives only the weaker estimate \eqref{eq:Fisop}.
This is because the reverse curve $\bar{\gamma}(t):=\gamma(l-t)$
of a geodesic $\gamma:[0,l] \lra M$ is not necessarily geodesic in the non-reversible situation.
Hence in Theorem~\ref{th:needle}(iii)
we have no information of $\rho_{\gamma}$ along the reverse curve of $\gamma$
(parametrized by arc-length).

Though our construction of needle decompositions essentially follows the lines of \cite{BC,Ca,CM},
one can give simpler and clearer descriptions at some points
thanks to finer properties of Finsler manifolds
such as the better understanding of the behavior of geodesics.
For the sake of accessibility (to Finsler geometers for instance),
we tried to make this article self-contained
up to some basic facts about the weighted Ricci curvature and optimal transport theory
(these can be found in \cite{Oint}, see also \cite{Oneg} for the case of $N<0$).
We also believe that this alternative approach to Klartag's work \cite{Kl}
is worthwhile even in the Riemannian case.

The article is organized as follows.
After preliminaries on Finsler geometry and optimal transport theory,
in \S \ref{sc:CD} we discuss Lott--Sturm--Villani's curvature-dimension condition $\CD(K,N)$
for $K \in \R$, $N \in (-\infty,0] \cup [n,\infty]$.
The case of $N=0$ is new and of independent interest.
We construct a needle decomposition associated with a $1$-Lipschitz function
and prove Theorem~\ref{th:needle}(i) in \S \ref{sc:needle}.
\S \ref{sc:mean0} is devoted to the proof of Theorem~\ref{th:mean0}.
In \S \ref{sc:prop} we come back to the study of general $1$-Lipschitz functions
and show Theorem~\ref{th:needle}(ii), (iii).
We prove Theorem~\ref{th:isop} in \S \ref{sc:isop},
and close the article with several further problems in \S \ref{sc:prob}.
\medskip

{\it Acknowledgements.}
I am grateful to Emanuel Milman for suggesting to learn Klartag's work \cite{Kl} to me,
and to Andrea Mondino for fruitful discussions on \cite{CM}.
I would like to also thank my colleagues in Kyoto University,
Takao Yamaguchi, Takumi Yokota and Yu Kitabeppu,
for sharing the interest in this work.

\section{Preliminaries}\label{sc:prel}

We review basic facts in Finsler geometry and optimal transport theory
necessary in our discussion.

\subsection{Finsler geometry}\label{ssc:Fgeom}

We refer to \cite{BCS,Shlec,Oint} for materials in this subsection.
Let $M$ be a connected $\cC^{\infty}$-manifold of dimension $n \ge 1$ without boundary.
Given a local coordinate $(x^i)_{i=1}^n$ on an open set $U \subset M$,
we will always use the fiber-wise linear coordinate $(x^i,v^j)_{i,j=1}^n$ of $TU$ such that
\[ v=\sum_{j=1}^n v^j \frac{\del}{\del x^j}\Big|_x \in T_xM
 \qquad \text{for}\ x \in U. \]

\subsubsection{Finsler manifolds}

\begin{definition}[Finsler structures]\label{df:Fstr}
A nonnegative function $F:TM \lra [0,\infty)$ is called
a \emph{$\cC^{\infty}$-Finsler structure} of $M$ if the following three conditions hold.
\begin{enumerate}[(1)]
\item(\emph{Regularity})
$F$ is $\cC^{\infty}$ on $TM \setminus 0$,
where $0$ stands for the zero section.

\item(\emph{Positive $1$-homogeneity})
$F(cv)=cF(v)$ holds for all $v \in TM$ and $c>0$.

\item(\emph{Strong convexity})
The $n \times n$ matrix
\begin{equation}\label{eq:gij}
\big( g_{ij}(v) \big)_{i,j=1}^n :=
 \bigg( \frac{1}{2}\frac{\del^2 (F^2)}{\del v^i \del v^j}(v) \bigg)_{i,j=1}^n
\end{equation}
is positive-definite for all $v \in TM \setminus 0$.
\end{enumerate}
We call such a pair $(M,F)$ a \emph{$\cC^{\infty}$-Finsler manifold}.
\end{definition}

\begin{remark}\label{rm:rev}
We stress that the homogeneity is required only in the positive direction,
therefore $F(-v) \neq F(v)$ is allowed.
Admitting such \emph{non-reversibility} is one of the important features of Finsler manifolds
(see \cite[Chapter~11]{BCS} for an important class of non-reversible Finsler manifolds
called \emph{Randers spaces}).
We say that $F$ is \emph{reversible} if $F(-v)=F(v)$ holds for all $v \in TM$
(in other words, $F$ is \emph{absolutely $1$-homogeneous}).
\end{remark}

For each $v \in T_xM \setminus 0$, the positive-definite matrix
$(g_{ij}(v))_{i,j=1}^n$ in \eqref{eq:gij} induces
the Riemannian structure $g_v$ of $T_xM$ by
\begin{equation}\label{eq:gv}
g_v\bigg( \sum_{i=1}^n a_i \frac{\del}{\del x^i}\Big|_x,
 \sum_{j=1}^n b_j \frac{\del}{\del x^j}\Big|_x \bigg)
 := \sum_{i,j=1}^n a_i b_j g_{ij}(v).
\end{equation}
This inner product is regarded as the best Riemannian approximation of $F|_{T_xM}$ in the direction $v$,
and plays a vital role in Finsler geometry.
A  geometric way of introducing $g_v$ is that the unit sphere of $g_v$ is tangent to that of $F|_{T_xM}$
at $v/F(v)$ up to the second order.
In particular, we have $g_v(v,v)=F(v)^2$.

For $x,y \in M$, define the \emph{distance} from $x$ to $y$ in a natural way by
\[ d(x,y):=\inf_{\eta} \int_0^1 F\big( \dot{\eta}(t) \big) \,dt, \]
where the infimum is taken over all $\cC^1$-curves $\eta:[0,1] \lra M$
with $\eta(0)=x$ and $\eta(1)=y$.
We remark that our distance can be \emph{asymmetric} (namely $d(y,x) \neq d(x,y)$)
since $F$ is only positively homogeneous.
The following (ordered) triangle inequality is readily observed from the definition:
\begin{equation}\label{eq:tri}
d(x,y) \le d(x,z)+d(z,y) \qquad \text{for all}\ x,y,z \in M.
\end{equation}

A $\cC^{\infty}$-curve $\eta:[0,l] \lra M$ is called a \emph{geodesic}
if it is locally minimizing and has a constant speed
(meaning that $F(\dot{\eta})$ is constant).
We remark that the reverse curve $\bar{\eta}(t):=\eta(l-t)$
is not necessarily locally minimizing nor of constant speed because of the non-reversibility of $F$.
One can write down the geodesic equation,
then the standard ODE theory ensures the short time existence and the uniqueness
of a geodesic for each given initial velocity.
Given $v \in T_xM$, if there is a geodesic $\eta:[0,1] \lra M$
with $\dot{\eta}(0)=v$, then we define the \emph{exponential map}
by $\exp_x(v):=\eta(1)$.
We say that $(M,F)$ is \emph{forward complete} if the exponential
map is defined on whole $TM$.
Then by the Hopf--Rinow theorem any pair of points
is connected by a minimal geodesic (see \cite[Theorem~6.6.1]{BCS}).

\subsubsection{Lipschitz functions}

Let us denote by $\cL^*:T^*M \lra TM$ the \emph{Legendre transform}
associated with $F$ and its dual norm $F^*$ on $T^*M$.
Precisely, $\cL^*$ is sending $\alpha \in T_x^*M$ to the unique element $v \in T_xM$
such that $\alpha(v)=F^*(\alpha)^2$ and $F(v)=F^*(\alpha)$.
Note that $\cL^*|_{T^*_xM}$ becomes a linear operator only when $F|_{T_xM}$
is an inner product.
For a differentiable function $\varphi:M \lra \R$, the \emph{gradient vector}
of $\varphi$ at $x$ is defined as the Legendre transform of the derivative:
$\nabla \varphi(x):=\cL^*(d\varphi(x)) \in T_xM$.

With respect to our asymmetric distance $d$,
we say that a function $\varphi:M \lra \R$ is \emph{$L$-Lipschitz} for $L \ge 0$ if
\begin{equation}\label{eq:L-Lip}
-Ld(y,x) \le \varphi(y)-\varphi(x) \le Ld(x,y) \qquad \text{for all}\ x,y \in M.
\end{equation}
Notice that the first inequality in \eqref{eq:L-Lip} indeed follows from the second one
by exchanging $x$ and $y$.
If $\varphi$ is $\cC^1$, then \eqref{eq:L-Lip} is equivalent to $\sup_M F(\nabla \varphi) \le L$.

We denote by $\Lip_L(M)$ the set of all $L$-Lipschitz functions on $M$.
We will be mainly interested in the case of $L=1$.
A typical example of a $1$-Lipschitz function is the distance function from a set:
$\varphi_A(x)=\inf_{z \in A}d(z,x)$ with $A \subset M$.
The triangle inequality \eqref{eq:tri} ensures that $\varphi_A \in \Lip_1(M)$.

\subsubsection{Weighted Ricci curvature}

Next we discuss the curvature.
When $n=1$, we consider the Ricci curvature to be identically zero.
When $n \ge 2$,
the Ricci curvature for a Finsler manifold is defined by using the Chern connection.
Instead of giving the precise definition,
here we explain a useful interpretation found in \cite[\S 6.2]{Shlec}
(going back to \cite{Au}).

We denote the \emph{unit tangent sphere bundle} by $UM:=TM \cap F^{-1}(1)$.
Given $v \in U_xM$, we extend it to a $\cC^{\infty}$-vector field $V$
on a neighborhood of $x$ in such a way that every integral curve of $V$ is geodesic,
and consider the Riemannian structure $g_V$ induced from \eqref{eq:gv}.
Then the \emph{Ricci curvature} $\Ric(v)$ of $v$ ($=V(x)$) with respect to $F$ coincides with
the Ricci curvature of $v$ with respect to $g_V$
(independently from the choice of $V$).

Now we fix a positive $\cC^{\infty}$-measure $\fm$ on $M$.
Inspired by the above interpretation of the Finsler-Ricci curvature
and the theory of weighted Riemannian manifolds,
the weighted Ricci curvature for the triple $(M,F,\fm)$ was introduced in \cite{Oint} as follows.

\begin{definition}[Weighted Ricci curvature]\label{df:wRic}
We first define the function $\Psi:UM \lra \R$
by the decomposition $\fm=\e^{-\Psi(\dot{\eta})}\vol_{\dot{\eta}}$
along unit speed geodesics $\eta$,
where $\vol_{\dot{\eta}}$ denotes the Riemannian volume measure of $g_{\dot{\eta}}$.
Then, given a unit vector $v \in U_xM$ and the geodesic $\eta:(-\ve,\ve) \lra M$
with $\dot{\eta}(0)=v$, we define the \emph{weighted Ricci curvature}
involving a parameter $N \in (-\infty,0] \cup [n,\infty]$ by
\begin{enumerate}[(1)]
\item $\Ric_N(v):=\Ric(v) +(\Psi \circ \dot{\eta})''(0) -\displaystyle\frac{(\Psi \circ \dot{\eta})'(0)^2}{N-n}\quad$
for $N \in (-\infty,0] \cup (n,\infty)$,

\item $\Ric_{\infty}(v):=\Ric(v) +(\Psi \circ \dot{\eta})''(0)$,

\item $\Ric_n(v):=\displaystyle
 \begin{cases} \Ric(v)+(\Psi \circ \dot{\eta})''(0) &\ \text{if}\ (\Psi \circ \dot{\eta})'(0)=0, \\
 -\infty &\ \text{if}\ (\Psi \circ \dot{\eta})'(0) \neq 0. \end{cases}$
\end{enumerate}
We also set $\Ric_N(cv):=c^2 \Ric_N(v)$ for $c \ge 0$.
\end{definition}

We will say that $\Ric_N \ge K$ holds for some $K \in \R$
if $\Ric_N(v) \ge KF^2(v)$ for all $v \in TM$.

\begin{remark}\label{rm:wRic}
(a)
In the notation of Definition~\ref{df:wRic},
$(\Psi \circ \dot{\eta})'(0)$ coincides with the \emph{$\mathbf{S}$-curvature}
$\mathbf{S}(v)$ (see \cite[\S 7.3]{Shlec}).
For a Riemannian manifold $(M,g,\vol_g)$ endowed with the Riemannian volume measure,
clearly we have $\Psi \equiv 0$ and hence $\Ric_N =\Ric$ for all $N$.
We know that, however, a Finsler manifold may not admit any measure $\fm$ satisfying $\mathbf{S} \equiv 0$
(in other words, $\Ric_n \neq -\infty$), see \cite{ORand} for such an example.
This means that there may be no nice reference measure,
thus it is natural (and necessary) to begin with an arbitrary measure.

(b)
Although we will consider only $N \in (-\infty,0] \cup [n,\infty]$,
the definition of $\Ric_N$ in (1) makes sense also for $N \in (0,n)$.
We observe from the definition that
\[ \Ric_n \le \Ric_N \le \Ric_{\infty} \le \Ric_{N'} \qquad
 \text{for}\ n<N<\infty,\, -\infty<N'<n, \]
and $\Ric_N, \Ric_{N'}$ are non-decreasing in $N \in [n,\infty]$, $N' \in (-\infty,n)$, respectively.
Traditionally the range of $N$ was restricted in $[n,\infty]$ (see \cite{Bak,Qi,Lo}).
The case of $N \in (-\infty,0)$ was investigated rather recently in
\cite{OT1,OT2,MR,KM,Oneg,Mineg}, and some results admit $N \in [0,1)$
and even $N=1$ (see \cite{Wy3}).
Klartag's work \cite{Kl} also covers $N \in (-\infty,1) \cup [n,\infty]$.
See \cite{Miex} for a recent interesting example equipped with $N \in (-\infty,n)$.

(c)
If $n=N=1$, then $\Ric_1 \ge K$ implies $(\Psi \circ \dot{\eta})' \equiv 0$
and hence $\Psi$ is constant on each of the two connected components of $UM$.
This means that
\[ F\bigg( \frac{\del}{\del x} \bigg) \equiv c_1, \qquad
 F\bigg( {-}\frac{\del}{\del x} \bigg) \equiv c_2, \qquad
 d\fm=c_3 dx \]
for some $c_1,c_2,c_3>0$, where $x$ is the standard coordinate of $M=\R$ or $\Sph^1$.
In particular, $\Ric_1 \equiv 0$ in this case.
\end{remark}

Similarly to the weighted Riemannian case,
the bound $\Ric_N \ge K$ implies many analytic and geometric consequences,
such as Bochner's inequality (for $N \in (-\infty,0) \cup [n,\infty]$, see \cite{OSbw,Oneg}),
the Bishop--Gromov volume comparison theorem (for $N \in [n,\infty)$, see \cite{Oint}),
and the Cheeger--Gromoll splitting theorem (for $N \in [n,\infty]$, see \cite{Osplit}).
Bochner's inequality and the splitting theorem can be generalized even for $N \in (-\infty,1]$,
see \cite{Wy3}.

For later convenience, we introduce the following notations.

\begin{definition}[Reverse Finsler structures]\label{df:rev}
Define the \emph{reverse Finsler structure} $\rev{F}$ of $F$ by $\rev{F}(v):=F(-v)$.
We will put arrows $\leftarrow$ on those quantities associated with $\rev{F}$,
for example, $\rev{d}\!(x,y)=d(y,x)$, $\rev{\nabla}\varphi=-\nabla(-\varphi)$
and $\rev{\Ric}_N(v)=\Ric_N(-v)$.
\end{definition}

Observe that $\varphi$ is $1$-Lipschitz with respect to $F$ if and only if
$-\varphi$ is $1$-Lipschitz with respect to $\rev{F}$.
Notice also that $\Ric_N \ge K$ is equivalent to $\rev{\Ric}_N(v) \ge K\rev{F}(v)^2$,
and hence the weighted Ricci curvature bound is common between $F$ and $\rev{F}$.
We say that $(M,F)$ is \emph{backward complete} if $(M,\rev{F})$ is forward complete.
We remark that the forward and backward completenesses are not mutually equivalent in general.

\subsection{Optimal transport theory}\label{ssc:OT}

We refer to \cite{Vi1,Vi2} for the basics and recent developments of optimal transport theory.
Here we restrict ourselves to the case of a Finsler manifold $(M,F,\fm)$ as in the previous subsection.

Denote by $\cP(M)$ the set of all Borel probability measures on $M$.
For $p \in [1,\infty)$, let
$\cP^p(M) \subset \cP(M)$ be the subset consisting of measures $\mu$ satisfying
\[ \int_M \{d^p(x,y) +d^p(y,x)\} \,\mu(dy) <\infty \]
for some (and hence all) $x \in M$.
For $\mu,\nu \in \cP(M)$, we say that $\pi \in \cP(M \times M)$ is their \emph{coupling} if
$(p_1)_{\sharp}\pi=\mu$ and $(p_2)_{\sharp}\pi=\nu$,
where $p_1,p_2:M \times M \lra M$ are the projections ($p_1(x,y)=x,p_2(x,y)=y$)
and $(p_i)_{\sharp}\pi$ denotes the push-forward measure of $\pi$ by $p_i$.

The \emph{$L^p$-Wasserstein distance} between $\mu,\nu \in \cP^p(M)$ is defined by
\[ W_p(\mu,\nu):=\inf_{\pi \in \Pi(\mu,\nu)}
 \left( \int_{M \times M} d^p(x,y) \,\pi(dxdy) \right)^{1/p}, \]
where $\Pi(\mu,\nu) \subset \cP(M \times M)$ is the set of all couplings of $\mu$ and $\nu$.
A coupling attaining the above infimum is called a \emph{$d^p$-optimal coupling}.
Notice that $W_p(\mu,\nu)<\infty$ by the definition of $\cP^p(M)$,
and $W_p$ enjoys the positivity ($W_p(\mu,\nu)>0$ unless $\mu=\nu$ in law)
and the (ordered) triangle inequality
\[ W_p(\mu,\nu) \le W_p(\mu,\omega)+W_p(\omega,\nu) \qquad
 \text{for all}\ \mu,\omega,\nu \in \cP^p(M), \]
while $W_p(\mu,\nu) \neq W_p(\nu,\mu)$ due to the asymmetry of $d$.
Finding an optimal coupling $\pi$ of given $\mu,\nu \in \cP^p(M)$
is called the \emph{Monge--Kantorovich optimal transport problem}.

The following elegant equivalent condition to the optimality of a coupling
will play a role (see \cite[Theorem~5.10]{Vi2}).

\begin{theorem}[Cyclical monotonicity]\label{th:cymo}
For $\mu,\nu \in \cP^p(M)$, a coupling
$\pi \in \Pi(\mu,\nu)$ is $d^p$-optimal if and only if the set $\supp\pi \subset M \times M$
is \emph{$d^p$-cyclically monotone} in the sense that,
for any finite set $\{(x_i,y_i)\}_{i=1}^l \subset \supp\pi$, we have
\[ \sum_{i=1}^l d^p(x_i,y_i) \le \sum_{i=1}^l d^p(x_i,y_{i+1}), \]
where $y_{l+1}:=y_1$ in the RHS.
\end{theorem}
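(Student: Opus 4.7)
The plan is to prove the two implications separately for the possibly asymmetric cost $c(x,y):=d^p(x,y)$; the general framework from \cite{Vi2} applies essentially without change, but I outline the key steps.

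For the direction ``$\pi$ optimal $\Rightarrow$ $\supp\pi$ is $d^p$-cyclically monotone,'' I argue by contradiction. Suppose some family $\{(x_i,y_i)\}_{i=1}^l \subset \supp\pi$ violates the inequality with strict gap $\delta>0$. Continuity of $d^p$ yields compact neighborhoods $U_i \ni x_i$, $V_i \ni y_i$ on which the same strict reverse inequality persists uniformly with gap at least $\delta/2$. Set $\alpha_i := \pi|_{U_i \times V_i}/\pi(U_i \times V_i)$, and invoke the gluing lemma to produce $\sigma_i \in \cP(U_i \times V_{i+1})$ whose $U_i$-marginal equals $(p_1)_\sharp \alpha_i$ and whose $V_{i+1}$-marginal equals $(p_2)_\sharp \alpha_{i+1}$ (indices cyclic). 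Then for $\ve > 0$ sufficiently small, the perturbed measure
$$\pi' := \pi + \ve \sum_{i=1}^l (\sigma_i - \alpha_i)$$
is a nonnegative element of $\Pi(\mu,\nu)$ satisfying $\int d^p\,d\pi' \le \int d^p\,d\pi - \ve\delta/2$, contradicting optimality.

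For the converse, I construct a Kantorovich potential by Rockafellar's procedure. Fix a base point $(x_0,y_0) \in \supp\pi$ and define
$$\varphi(x) := \inf \Bigl\{\sum_{i=1}^l \bigl[ d^p(x_i,y_i) - d^p(x_{i-1},y_i) \bigr] + d^p(x,y_l) - d^p(x_l,y_l) \Bigr\},$$
the infimum ranging over $l \ge 0$ and chains $\{(x_i,y_i)\}_{i=1}^l \subset \supp\pi$. The $d^p$-cyclical monotonicity of $\supp\pi$ forces $\varphi(x_0)=0$ and $\varphi > -\infty$. Setting $\psi(y) := \inf_{x \in M}\{d^p(x,y) - \varphi(x)\}$, one directly verifies $\varphi(x)+\psi(y) \le d^p(x,y)$ for all $x,y$, with equality on $\supp\pi$ (append $(x,y)$ as a final link in the chain defining $\varphi$). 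Hence for any $\pi' \in \Pi(\mu,\nu)$,
$$\int d^p\,d\pi' \ge \int \bigl( \varphi(x)+\psi(y) \bigr) \,d\pi'(x,y) = \int \varphi\,d\mu + \int \psi\,d\nu = \int d^p\,d\pi,$$
proving optimality of $\pi$.

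The main obstacle lies in the sufficiency direction: the Rockafellar potential $\varphi$ is a priori only universally measurable, and the bounds $\varphi \in L^1(\mu)$, $\psi \in L^1(\nu)$ needed to apply Fubini must be verified. Using the one-step chain bound $\varphi(x) \le d^p(x,y_0) - d^p(x_0,y_0)$ and its symmetric analogue for $-\psi$, combined with the elementary inequality $(a+b)^p \le 2^{p-1}(a^p+b^p)$ and the ordered triangle inequality \eqref{eq:tri}, one dominates $|\varphi|$ and $|\psi|$ by linear combinations of $d^p(x_0,\cdot)$ and $d^p(\cdot,x_0)$. This is precisely where the definition of $\cP^p(M)$, which integrates both $d^p(x,y)$ and $d^p(y,x)$, becomes essential; the symmetrized integrability assumption is tailored to neutralize the asymmetry of $d$. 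The measurable disintegration and gluing needed for the necessity direction are standard on Polish spaces, so the non-reversibility of $F$ causes no additional conceptual difficulty in either implication.
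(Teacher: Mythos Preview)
The paper does not supply its own proof of this theorem: it is quoted as a background fact with the citation ``see \cite[Theorem~5.10]{Vi2}'' and no argument is given. So there is nothing to compare your proposal against within the paper itself.

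Your outline is the standard two-step argument one finds in \cite{Vi2}: perturb a hypothetical non-monotone coupling to lower the cost, and in the converse direction build a Kantorovich potential by Rockafellar's chain construction. The sketch is broadly correct, and you rightly identify the one place where the asymmetry of $d$ interacts with the argument, namely the $L^1$-bounds on $\varphi$ and $\psi$, which are handled by the symmetrized moment condition in the definition of $\cP^p(M)$. One caution: the full proof in \cite{Vi2} for general (possibly unbounded, possibly $+\infty$-valued) costs is considerably more delicate than your sketch suggests --- in particular, the integrability and measurability of the Rockafellar potential require care, and Villani in fact routes the argument through $c$-concave envelopes rather than working directly with the raw chain infimum. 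For the continuous finite cost $c=d^p$ on a Finsler manifold with the stated moment assumption your outline is adequate, but if you intend this as a self-contained proof you should either restrict explicitly to that setting or flesh out the measurability step.
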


Although our discussion is much indebted to ideas from optimal transport theory
(especially the $p=1$ situation), we will use only a few basic facts of the theory.
Besides Theorem~\ref{th:cymo}, what we need is the fact
(called the \emph{Brenier--McCann theorem} due to \cite{Br,Mc},
see \cite{Oint} for the Finsler case) that any $d^2$-optimal coupling
between $\mu,\nu \in \cP^2(M)$, $\mu$ being absolutely continuous
with respect to $\fm$ (denoted by $\mu \ll \fm$),
is unique and represented by using a measurable map $T:M \lra M$ as
$\pi=(\id_M \times T)_{\sharp}\mu$.
In particular, we have $T_{\sharp}\mu=\nu$.
In this case we call $T$ a \emph{$d^2$-optimal transport} from $\mu$ to $\nu$.
By denoting a minimal geodesic from $x$ to $T(x)$ by $\eta_x:[0,1] \lra M$
(which is unique for $\mu$-almost every $x$)
and putting $T_{\lambda}(x):=\eta_x(\lambda)$,
the curve $\mu_{\lambda}:=(T_{\lambda})_{\sharp}\mu$ in $\cP^2(M)$ clearly satisfies
$\mu_0=\mu,\mu_1=\nu$ and
\[ W_2(\mu_{\lambda},\mu_{\lambda'})=(\lambda' -\lambda)W_2(\mu,\nu) \qquad
 \text{for}\ 0 \le \lambda<\lambda' \le 1. \]
Therefore $(\mu_{\lambda})_{\lambda \in [0,1]}$ is a unique minimal geodesic
from $\mu$ to $\nu$ with respect to $W_2$.
It also holds that $\mu_{\lambda} \ll \fm$ for all $\lambda \in [0,1)$.

Another fact behind our construction is the \emph{Kantorovich--Rubinstein duality}:
\begin{equation}\label{eq:KR}
W_1(\mu,\nu)=\sup_{\phi \in \Lip_1(M)} \left\{ 
 \int_M \phi \,d\nu -\int_M \phi \,d\mu \right\}
 \qquad \text{for}\ \mu,\nu \in \cP^1(M)
\end{equation}
(see \cite[Theorem~5.10]{Vi2},
and also \cite[p.~53]{Vi2} for an interesting economic explanation).
Though we will not use \eqref{eq:KR},
the discussion in \S \ref{sc:mean0} is better understood keeping \eqref{eq:KR} in mind.

\section{Curvature-dimension condition}\label{sc:CD}

We next discuss the curvature-dimension condition
in the sense of Sturm and Lott--Villani.
This theory is making a breathtaking progress in this decade.
We refer to the book \cite{Vi2} for a detailed account at that time (2009),
and to \cite{Ogren} for a survey from a more geometric viewpoint.

Hereafter,
let $(M,F)$ be a connected, forward and backward complete
$\cC^{\infty}$-Finsler manifold of dimension $n \ge 1$ without boundary,
and let $\fm$ be a positive $\cC^{\infty}$-measure on $M$.
For $\mu \in \cP(M)$ such that $\mu=\rho\fm \ll \fm$,
define the \emph{relative entropy} with respect to $\fm$ by
\[ \Ent_{\fm}(\mu):=\int_M \rho \log\rho \,d\fm \]
if $\int_{\{\rho>1\}} \rho \log\rho \,d\fm<\infty$,
and $\Ent_{\fm}(\mu):=\infty$ otherwise.
We also define for $N \in (-\infty,0) \cup (1,\infty)$ and $\mu=\rho\fm \in \cP(M)$
the (relative) \emph{R\'enyi entropy} with respect to $\fm$ by
\begin{align*}
S_N(\mu) &:=-\int_M \rho^{(N-1)/N} \,d\fm \qquad \text{if}\ N \in (1,\infty), \\
S_N(\mu) &:=\int_M \rho^{(N-1)/N} \,d\fm \qquad \text{if}\ N \in (-\infty,0).
\end{align*}
We suppressed the dependence on $\fm$ for notational simplicity.
Notice that the generating functions $h(s)=s\log s$ ($N=\infty$),
$-s^{(N-1)/N}$ ($N>1$) and
$s^{(N-1)/N}$ ($N<0$) are all convex on $(0,\infty)$ and $h(0)=0$.
In the special case of $N=0$, the entropy $S_0$ is defined as the limit:
\[ S_0(\mu):=\esssup \rho =\lim_{N \uparrow 0} S_N(\mu)^{-N}.  \]

The lower curvature bound $\Ric_N \ge K$ is characterized
by a convexity inequality of $\Ent_{\fm}$ or $S_N$ as follows.
The inequality involves the functions:
\begin{equation}\label{eq:bs}
\bs_{\kappa}(r):= \left\{
 \begin{array}{cl}
 \frac{1}{\sqrt{\kappa}} \sin(\sqrt{\kappa}r) & \text{if}\ \kappa>0, \\
 r & \text{if}\ \kappa=0, \\
 \frac{1}{\sqrt{-\kappa}} \sinh(\sqrt{-\kappa}r) & \text{if}\ \kappa<0,
 \end{array} \right.
 \quad \text{for}\ r \ge 0
\end{equation}
(this is the solution to the Jacobi equation $f''+\kappa f=0$
with $f(0)=0$ and $f'(0)=1$),
\begin{equation}\label{eq:btau}
\bm{\tau}^{(\lambda)}_{K,N}(r)
 :=\lambda^{1/N} \left( \frac{\bs_{K/(N-1)}(\lambda r)}{\bs_{K/(N-1)}(r)} \right)^{(N-1)/N},
 \qquad \lambda \in (0,1),\, N \neq 0,
\end{equation}
for $r>0$ if $K/(N-1) \le 0$ and for $r \in (0,\pi\sqrt{(N-1)/K})$ if $K/(N-1)>0$.
Set also $\bm{\tau}^{(\lambda)}_{K,N}(0):=\lambda$ for all $K,N,\lambda$.
Moreover, when $K/(N-1)>0$, we define for convenience
$\bm{\tau}^{(\lambda)}_{K,N}(r):=\infty$ if $r \ge \pi\sqrt{(N-1)/K}$.

\begin{theorem}[Curvature-dimension condition]\label{th:CD}
For $N \in [n,\infty)$ $($with $N \neq 1$ if $n=1)$ and $K \in \R$,
we have $\Ric_N \ge K$ if and only if $(M,F,\fm)$ satisfies
the \emph{curvature-dimension condition} $\CD(K,N)$ in the sense that,
for any pair of absolutely continuous measures $\mu_0=\rho_0 \fm, \mu_1=\rho_1 \fm \in \cP^2(M)$,
we have for all $\lambda \in (0,1)$
\begin{equation}\label{eq:CD}
S_N(\mu_{\lambda}) \le -\int_{M \times M} \left\{
 \bm{\tau}^{(1-\lambda)}_{K,N}\big( d(x,y) \big) \rho_0(x)^{-1/N}
 +\bm{\tau}^{(\lambda)}_{K,N}\big( d(x,y) \big) \rho_1(y)^{-1/N}  \right\} \pi(dxdy),
\end{equation}
where $(\mu_{\lambda})_{\lambda \in [0,1]} \subset \cP^2(M)$ is
the unique minimal geodesic with respect to $W_2$
and $\pi \in \Pi(\mu_0,\mu_1)$ is the unique $d^2$-optimal coupling.

Similarly, $\Ric_N \ge K$ with $N \in (-\infty,0)$ is equivalent to $\CD(K,N)$ in the sense that
\begin{equation}\label{eq:CDneg}
S_N(\mu_{\lambda}) \le \int_{M \times M} \left\{
 \bm{\tau}^{(1-\lambda)}_{K,N}\big( d(x,y) \big) \rho_0(x)^{-1/N}
 +\bm{\tau}^{(\lambda)}_{K,N}\big( d(x,y) \big) \rho_1(y)^{-1/N}  \right\} \pi(dxdy)
\end{equation}
holds instead of \eqref{eq:CD}$;$
$\Ric_{\infty} \ge K$ is equivalent to $\CD(K,\infty)$ in the sense that
\[ \Ent_{\fm}(\mu_{\lambda}) \le (1-\lambda)\Ent_{\fm}(\mu_0) +\lambda\Ent_{\fm}(\mu_1)
 -\frac{K}{2}(1-\lambda)\lambda W_2^2(\mu_0,\mu_1); \]
and $\Ric_0 \ge K$ is equivalent to $\CD(K,0)$ in the sense that
\begin{equation}\label{eq:CD0}
S_0(\mu_{\lambda}) \le \max \left\{ \esssup_{\supp\pi} \bigg[
 \frac{\bs_{-K}((1-\lambda)d(x,y))}{(1-\lambda)\bs_{-K}(d(x,y))} \rho_0(x) \bigg],
 \esssup_{\supp\pi} \bigg[
 \frac{\bs_{-K}(\lambda d(x,y))}{\lambda \bs_{-K}(d(x,y))} \rho_1(y) \bigg] \right\},
\end{equation}
where the essential supremum is taken with respect to $(x,y) \in \supp\pi$.
\end{theorem}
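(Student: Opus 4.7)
The cases $N \in [n,\infty]$ and $N \in (-\infty,0)$ are already established in \cite{Oint} and \cite{Oneg} respectively, so I would concentrate on the genuinely new case $N = 0$ and treat both directions of the equivalence. The uniform strategy is the Cordero-Erausquin--McCann--Schmuckenschl\"ager scheme adapted to the Finsler setting: combine Brenier--McCann with a pointwise Jacobi-field bound on the Jacobian of the optimal displacement interpolation, and then pass from that pointwise bound to the stated global inequality (an essential supremum in the case $N=0$).

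Given $\mu_0 = \rho_0 \fm$ and $\mu_1 = \rho_1 \fm$ in $\cP^2(M)$, the Brenier--McCann theorem supplies a $d^2$-optimal map $T$, unique minimizing geodesics $\eta_x$ from $x$ to $T(x)$, and the interpolation $T_\lambda(x):=\eta_x(\lambda)$ with $\mu_\lambda=(T_\lambda)_{\sharp}\mu_0 = \rho_\lambda \fm$. Writing $J_\lambda(x)$ for the Jacobian of $T_\lambda$ at $x$, the change-of-variables identities $\rho_\lambda(T_\lambda(x)) J_\lambda(x) = \rho_0(x)$ and $\rho_1(T(x)) J_1(x) = \rho_0(x)$ reduce \eqref{eq:CD0} to the $\mu_0$-a.e.\ pointwise Jacobian inequality
\[
 J_\lambda(x) \ge \min\!\left\{\frac{(1-\lambda)\bs_{-K}(d)}{\bs_{-K}((1-\lambda)d)},\ \frac{\lambda \bs_{-K}(d)}{\bs_{-K}(\lambda d)}\, J_1(x)\right\},
\]
with $d:=d(x,T(x))$. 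Indeed, inverting this bound yields
$\rho_\lambda(T_\lambda(x)) \le \max\{\frac{\bs_{-K}((1-\lambda)d)}{(1-\lambda)\bs_{-K}(d)}\rho_0(x),\,\frac{\bs_{-K}(\lambda d)}{\lambda \bs_{-K}(d)}\rho_1(T(x))\}$, and taking essential supremum together with $T_{\sharp}\mu_0=\mu_1$ (which turns $\esssup_{\mu_0}(\,\cdot \circ T)$ into $\esssup_{\mu_1}$) reproduces exactly the right-hand side of \eqref{eq:CD0}.

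To prove the Jacobian inequality, I would decompose $J(t) = \e^{-\Psi(\dot\eta(t))} A(t)$ along the unit-speed reparametrization of $\eta_x$, where $A(t)$ is the Riemannian Jacobian in the induced metric $g_{\dot\eta(t)}$, and apply the weighted Jacobi--Riccati inequality together with the identity
\[
 \Ric_0(\dot\eta) = \Ric(\dot\eta) + (\Psi \circ \dot\eta)''(t) + \frac{((\Psi \circ \dot\eta)'(t))^2}{n}
\]
(Definition~\ref{df:wRic}(1) with $N=0$). A formal limit $N \uparrow 0$ of the $N<0$ Jacobian bound from \cite{Oneg} fails because the coefficients $\bm{\tau}_{K,N}^{(\lambda)}$ degenerate at $N=0$; instead I would argue directly along each transport geodesic, separating according to whether the minimum on the right-hand side is realized by the first or the second term, and invoking the corresponding Riccati comparison in each case (or, equivalently, a Pr\'ekopa--Leindler-type reasoning internal to the one-dimensional needle).

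The reverse implication $\CD(K,0) \Rightarrow \Ric_0 \ge K$ follows the classical localization pattern: fix $x_0 \in M$ and $v_0 \in U_{x_0}M$, construct pairs of absolutely continuous measures $\mu_0^{\ve},\mu_1^{\ve}$ concentrated in thin transversal slices near $x_0$ and $\exp_{x_0}(\ve v_0)$ respectively, and Taylor expand both sides of \eqref{eq:CD0} in $\ve$; the leading-order defect is $\Ric_0(v_0)-K F(v_0)^2$, so violating $\Ric_0(v_0) \ge K F(v_0)^2$ contradicts \eqref{eq:CD0}. The main obstacle is clearly the forward direction: the $\max$-structure in \eqref{eq:CD0} precludes a single integration-by-parts argument, and the absence of a closed-form coefficient $\bm{\tau}_{K,0}^{(\lambda)}$ forces the pointwise Jacobian bound to be derived from scratch rather than inherited by continuity from the existing $N \neq 0$ theory.
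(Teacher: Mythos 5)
Your overall architecture is the right one and parallels the paper's: for $\Ric_0\ge K\Rightarrow\CD(K,0)$, reduce to a pointwise Jacobian bound $\bJ_\lambda(x)\ge\min\{\tfrac{(1-\lambda)\bs_{-K}(d)}{\bs_{-K}((1-\lambda)d)},\tfrac{\lambda\bs_{-K}(d)}{\bs_{-K}(\lambda d)}\bJ_1(x)\}$ and invert via the Monge--Amp\`ere equation; the $\esssup$ bookkeeping you describe is exactly what is needed. But the one step you leave genuinely open is how the $\min$ arises, and your proposed mechanism is not convincing. A ``case analysis on which term realizes the minimum, invoking the corresponding Riccati comparison in each case'' does not obviously close: the Riccati comparison controls the full Jacobian along the geodesic, not the two competing endpoint expressions separately, and the winner of the $\min$ is not a property you can condition the ODE argument on. The actual derivation splits $\e^{-\psi(0)}\bJ_\lambda(x)=h_2(\lambda)^{-1}\e^{\beta(\lambda)}$ as in the $N<0$ theory, uses the two \emph{separate} comparisons (concavity of $\e^{\beta}$, and $h_2(\lambda)\le\tfrac{\bs_{-K}((1-\lambda)d)}{\bs_{-K}(d)}h_2(0)+\tfrac{\bs_{-K}(\lambda d)}{\bs_{-K}(d)}h_2(1)$, both valid independently of $N$), and then produces the $\min$ by the elementary inequality $\tfrac{a+b}{c+d}\ge\min\{a/c,\,b/d\}$. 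Also, your claim that the formal limit $N\uparrow 0$ of the $N<0$ Jacobian bound ``fails'' is not accurate: the individual coefficients $\bm{\tau}^{(\lambda)}_{K,N}$ blow up, but the combination $\{\cdot\}^N$ converges precisely to the $\min$-bound (the paper points this out); the reason one cannot simply pass to the limit in the \emph{hypotheses} is that $\Ric_0\ge K$ does not imply $\Ric_N\ge K$ for $N<0$, by the monotonicity of $N'\mapsto\Ric_{N'}$ on $(-\infty,n)$.

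For the converse, your localization outline is the standard one, but as written it asserts rather than derives that ``the leading-order defect is $\Ric_0(v_0)-KF(v_0)^2$.'' The term $+\psi'(0)^2/n$ distinguishing $\Ric_0$ from $\Ric_\infty$ does not appear unless the two test sets are scaled \emph{asymmetrically}: the proof takes $A_0=B^V(\eta(-r),\ve(1+ar))$ and $A_1=B^V(\eta(r),\ve(1-ar))$ with $a=-\psi'(0)/n$, chosen so that the first-order terms in $r$ of $\fm(A_0)$ and $\fm(A_1)$ vanish; only then does $\min\{\fm(A_0),\fm(A_1)\}$ agree with both to second order and the comparison of $r^2$-coefficients yield $\Ric(v)\ge K-\psi''(0)-\psi'(0)^2/n$. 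With symmetric slices the first-order term $\pm\psi'(0)r$ survives inside the $\min$ and the expansion does not isolate $\Ric_0$. (The paper also inserts a Brunn--Minkowski inequality \eqref{eq:B-M}, deduced from \eqref{eq:CD0} applied to uniform distributions, as the intermediate between $\CD(K,0)$ and the ball computation; this is a minor structural difference, but the shrinking/expanding parameter $a$ is essential and is missing from your sketch.)
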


The equivalence for $N \in [n,\infty]$
(established in \cite{Oint}, see also the survey \cite{Oaspm})
is a generalization to Finsler manifolds of the celebrated result on (weighted) Riemannian manifolds
by \cite{CMS,vRS,StI,StII,LV1,LV2}.
The $N<0$ case was shown in \cite{Oneg}.
The case of $N=0$ is new,
we shall give an outline of the proof of this case after some remarks.

\begin{remark}\label{rm:CD}
(a)
By the definition of $\bm{\tau}^{(\lambda)}_{K,N}$,
on the one hand, \eqref{eq:CD} becomes void if $K>0$ and
\begin{equation}\label{eq:diam}
\pi \left( \{(x,y) \,|\, d(x,y) \ge \pi\sqrt{(N-1)/K}\} \right) >0.
\end{equation}
This, however, never happens thanks to the Bonnet--Myers theorem
available for $N \in [n,\infty)$ and $K>0$.
On the other hand, \eqref{eq:CDneg} becomes trivial if $K<0$ and \eqref{eq:diam} holds.
This means that \eqref{eq:CDneg} gives only a local control when $K<0$.

(b)
Stated in Theorem~\ref{th:CD} is a somewhat simplified version of $\CD(K,N)$,
which is enough for our purpose and still characterizes $\Ric_N \ge K$.
For instance, it is possible and more consistent
to include measures $\mu_0,\mu_1$ with singular parts.

(c)
Let us briefly explain the proof of $\Ric_N \ge K\, \Rightarrow\, \CD(K,N)$
for $N \in (-\infty,0) \cup [n,\infty)$.
Recall that, since $\mu_0 \ll \fm$,
there is a unique $d^2$-optimal coupling $\pi \in \Pi(\mu_0,\mu_1)$
written as $\pi=(\id_M \times T)_{\sharp} \mu_0$ for a measurable map $T:M \lra M$.
Define $T_{\lambda}$ as in \S \ref{ssc:OT},
then $\mu_{\lambda}=(T_{\lambda})_{\sharp}\mu_0$.
The map $T_{\lambda}$ is in fact almost everywhere differentiable and,
under the bound $\Ric_N \ge K$, we have the important concavity inequality:
\begin{equation}\label{eq:Jcon}
\bJ_{\lambda}(x) \ge
 \left\{ \bm{\tau}^{(1-\lambda)}_{K,N}\big( d(x,T(x)) \big)
 +\bm{\tau}^{(\lambda)}_{K,N}\big( d(x,T(x)) \big) \bJ_1(x)^{1/N} \right\}^N
\end{equation}
for $\mu_0$-almost all $x$, where $\bJ_{\lambda}(x)$ is the Jacobian of
$T_{\lambda}$ at $x$ with respect to the measure $\fm$.
Together with the \emph{Jacobian equation} (\emph{Monge--Amper\`e equation}):
\begin{equation}\label{eq:Jeq}
\rho_0(x)=\rho_{\lambda} \big( T_{\lambda}(x) \big) \bJ_{\lambda}(x)
\qquad \text{for $\mu_0$-almost all $x \in M$},
\end{equation}
where $\mu_{\lambda}=\rho_{\lambda} \fm$,
the integration of \eqref{eq:Jcon} yields \eqref{eq:CD} (see \cite{Oint,Oneg} for details).
Conversely, the localization of \eqref{eq:CD} gives the infinitesimal inequality \eqref{eq:Jcon}.

(d)
When $N \in (-\infty,0) \cup [n,\infty]$,
$\CD(0,N)$ means that $S_N$ or $\Ent_{\fm}$ is convex
(in the weak sense) along all $W_2$-geodesics.
For $N=0$, $\CD(0,0)$ implies
\[ S_0(\mu_{\lambda}) \le \max\{ S_0(\mu_0),S_0(\mu_1) \}, \]
namely
\[ \esssup \rho_{\lambda}
 \le \max\left\{ \esssup \rho_0, \esssup \rho_1 \right\}. \]
\end{remark}

\begin{proof}
We give an outline of the proof of Theorem~\ref{th:CD} for $N=0$
using the same notations as \cite[Theorem~4.10]{Oneg}.
We first assume $\Ric_0 \ge K$ and take $\mu_k=\rho_k \fm \in \cP^2(M)$ for $k=0,1$.
Let $T_{\lambda}$ and $\bJ_{\lambda}$ be as in Remark~\ref{rm:CD}(c).
Fix $x \in M$ with $\rho_0(x)>0$ and decompose $\fm$ along
$\eta(\lambda):=T_{\lambda}(x)$ as $\fm|_{\eta}=\e^{-\psi(\lambda)} \vol_{\dot{\eta}}$
(recall Definition~\ref{df:wRic}).
Then we have $\e^{-\psi(0)} \bJ_{\lambda}(x)=h_2(\lambda)^{-1} \e^{\beta(\lambda)}$
for $h_2,\beta$ as in \cite{Oneg}.
By the same calculation as \cite{Oneg}, we find
\begin{align*}
\e^{\beta(\lambda)} &\ge (1-\lambda)\e^{\beta(0)} +\lambda \e^{\beta(1)}, \\
h_2(\lambda) &\le \frac{\bs_{-K}((1-\lambda)d)}{\bs_{-K}(d)} h_2(0)
 +\frac{\bs_{-K}(\lambda d)}{\bs_{-K}(d)} h_2(1),
\end{align*}
where $d:=d(x,T(x))$.
Combining these yields
\begin{align*}
&\e^{-\psi(0)} \bJ_{\lambda}(x)
 = h_2(\lambda)^{-1} \e^{\beta(\lambda)} \\
&\ge \bigg\{ \frac{\bs_{-K}((1-\lambda)d)}{\bs_{-K}(d)} h_2(0)
 +\frac{\bs_{-K}(\lambda d)}{\bs_{-K}(d)} h_2(1) \bigg\}^{-1}
 \{ (1-\lambda)\e^{\beta(0)} +\lambda \e^{\beta(1)} \} \\
&\ge \min \bigg\{ \bigg( \frac{\bs_{-K}((1-\lambda)d)}{\bs_{-K}(d)} h_2(0) \bigg)^{-1}
 (1-\lambda) \e^{\beta(0)},
 \bigg( \frac{\bs_{-K}(\lambda d)}{\bs_{-K}(d)} h_2(1) \bigg)^{-1}
 \lambda \e^{\beta(1)} \bigg\}
\end{align*}
since for $a,b,c,d>0$
\[ \frac{a+b}{c+d} =\frac{c}{c+d} \frac{a}{c} +\frac{d}{c+d} \frac{b}{d}
 \ge \min\bigg\{ \frac{a}{c},\frac{b}{d} \bigg\}. \]
Therefore we have, since  $\bJ_0(x)=1$,
\[ \bJ_{\lambda}(x) \ge \min\bigg\{ 
 \frac{(1-\lambda) \bs_{-K}(d)}{\bs_{-K}((1-\lambda)d)},
 \frac{\lambda \bs_{-K}(d)}{\bs_{-K}(\lambda d)} \bJ_1(x) \bigg\} \]
(this is indeed the limit of \eqref{eq:Jcon} as $N \uparrow 0$).
Together with the Jacobian equation \eqref{eq:Jeq}, this implies
\[ \rho_{\lambda}\big( T_{\lambda}(x) \big)
 =\frac{\rho_0(x)}{\bJ_{\lambda}(x)}
 \le \max\bigg\{ \frac{\bs_{-K}((1-\lambda)d)}{(1-\lambda)\bs_{-K}(d)} \rho_0(x),
 \frac{\bs_{-K}(\lambda d)}{\lambda \bs_{-K}(d)} \rho_1\big( T(x) \big) \bigg\}. \]
Hence we obtain \eqref{eq:CD0}.

To see the converse,
we assume $\CD(K,0)$ and employ the \emph{Brunn--Minkowski inequality} of the form:
\begin{equation}\label{eq:B-M}
\fm(A_{\lambda})
 \ge \min\bigg\{
 \min_{x \in A_0,\, y \in A_1} \frac{(1-\lambda)\bs_{-K}(d(x,y))}{\bs_{-K}((1-\lambda)d(x,y))}
 \fm(A_0),
 \min_{x \in A_0,\, y \in A_1} \frac{\lambda \bs_{-K}(d(x,y))}{\bs_{-K}(\lambda d(x,y))}
 \fm(A_1) \bigg\}
\end{equation}
for Borel measurable sets $A_0,A_1 \subset M$ and $\lambda \in (0,1)$,
where $A_{\lambda}$ is the set consisting of
$\eta(\lambda)$ for minimal geodesics $\eta:[0,1] \lra M$
with $\eta(0) \in A_0$ and $\eta(1) \in A_1$.
Notice that it is enough to consider the case of $0<\fm(A_0),\fm(A_1)<\infty$.
Then one can prove \eqref{eq:B-M} by applying \eqref{eq:CD0} to uniform distributions
$\mu_k:=\fm(A_k)^{-1} \cdot \fm|_{A_k}$ ($k=0,1$):
\begin{align*}
&\fm(A_{\lambda})^{-1}
 \le \fm(\supp \rho_{\lambda})^{-1}
 \le \esssup \rho_{\lambda} \\
&\le \max\bigg\{
 \max_{x \in A_0,\, y \in A_1} \frac{\bs_{-K}((1-\lambda)d(x,y))}{(1-\lambda)\bs_{-K}(d(x,y))}
 \fm(A_0)^{-1},
 \max_{x \in A_0,\, y \in A_1} \frac{\bs_{-K}(\lambda d(x,y))}{\lambda \bs_{-K}(d(x,y))}
 \fm(A_1)^{-1} \bigg\}.
\end{align*}

Fix a unit vector $v \in T_xM$ and let $\eta:(-\delta,\delta) \lra M$
be the geodesic with $\dot{\eta}(0)=v$.
Extend $\dot{\eta}$ to a vector field $V$ around $\Image(\eta)$
such that all integral curves of $V$ are geodesic,
and consider the Riemannian structure $g_V$.
Put $\psi(t):=\Psi(\dot{\eta}(t))$ for $\Psi$ in Definition~\ref{df:wRic},
$a:=-\psi'(0)/n$, and consider the open balls
\[ A_0:=B^V\! \big( \eta(-r),\ve(1+ar) \big),\qquad
 A_1:=B^V\! \big( \eta(r),\ve(1-ar) \big) \]
with respect to $g_V$ for $0<\ve \ll r \ll \delta$.
On the one hand, the asymptotic behavior of $\fm(A_{1/2})$
is controlled in terms of the Ricci curvature and $\psi$ as
\[ \frac{\fm(A_{1/2})}{c_n \ve^n}
 =\e^{-\psi(0)} \bigg( 1+\frac{\Ric(v)}{2} r^2 \bigg) +O(r^3), \]
where $c_n$ is the volume of the unit ball in $\R^n$.
On the other hand, it follows from \eqref{eq:B-M} that
\begin{align*}
\fm(A_{1/2})
&\ge \frac{\bs_{-K}(2r+O(\ve))}{2\bs_{-K}(r+O(\ve))} \min\{ \fm(A_0),\fm(A_1) \} \\
&= \bigg( 1+\frac{K}{2}r^2 +O(r^4) \bigg) \min\{ \fm(A_0),\fm(A_1) \}.
\end{align*}
Note that as $r \downarrow 0$
\begin{align*}
&\fm(A_1)= \e^{-\psi(r)} c_n \ve^n (1-ar)^n +O(\ve^{n+1}) \\
&= c_n \ve^n \e^{-\psi(0)} \bigg[ 1-\{ \psi'(0)+na \} r
 +\frac{1}{2} \{ -\psi''(0) +\psi'(0)^2 +2\psi'(0) na +n(n-1)a^2 \} r^2 \bigg] \\
&\quad +O(r^3) \\
&= c_n \ve^n \e^{-\psi(0)}
 \bigg\{ 1-\frac{1}{2} \bigg( \psi''(0)+\frac{\psi'(0)^2}{n} \bigg) r^2 \bigg\}
 +O(r^3)
\end{align*}
and similarly
\[ \fm(A_0)=c_n \ve^n \e^{-\psi(0)}
 \bigg\{ 1-\frac{1}{2} \bigg( \psi''(0)+\frac{\psi'(0)^2}{n} \bigg) r^2 \bigg\} +O(r^3). \]
Hence we obtain by comparing the coefficients of $r^2$ that
\[ \Ric(v) \ge K-\psi''(0)-\frac{\psi'(0)^2}{n}. \]
Therefore
\[ \Ric_0(v)=\Ric(v)+\psi''(0)+\frac{\psi'(0)^2}{n} \ge K \]
and we complete the proof.
$\qedd$
\end{proof}

We will also use the \emph{measure contraction property} (see \cite{Omcp,StII})
which is weaker and more flexible than the curvature-dimension condition.
The measure contraction property is regarded as a directional Bishop--Gromov inequality,
and makes sense only for $N \in [n,\infty)$.

\begin{theorem}[Measure contraction property]\label{th:MCP}
Assume $\Ric_N \ge K$ for $N \in [n,\infty)$ $($with $N \neq 1$ if $n=1)$
and $K \in \R$.
Then, for any $x \in M$ and a measurable set $A \subset M$ with $0<\fm(A)<\infty$,
we have
\begin{equation}\label{eq:MCP}
\fm(A_{\lambda}) \ge \lambda \inf_{y \in A} \left\{
 \frac{\bs_{K/(N-1)}(\lambda d(x,y))}{\bs_{K/(N-1)}(d(x,y))} \right\}^{N-1} \fm(A)
\end{equation}
for all $\lambda \in (0,1)$,
where $A_{\lambda} \subset M$ is the set consisting of $\eta(\lambda)$
for minimal geodesics $\eta:[0,1] \lra M$ with $\eta(0)=x$ and $\eta(1) \in A$.
\end{theorem}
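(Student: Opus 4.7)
The plan is to derive the measure contraction property as a degenerate form of the curvature-dimension condition, applying the Jacobian concavity inequality \eqref{eq:Jcon} to a transport where one endpoint collapses to the point $x$. Concretely, I will introduce the contraction map
\[
T_{\lambda}(y) := \eta_{x,y}(\lambda),
\]
where $\eta_{x,y}:[0,1] \lra M$ is the unique minimal geodesic from $x$ to $y$. Since the cut locus of $x$ has $\fm$-measure zero (the exponential map at $x$ is a diffeomorphism off a nowhere dense set of measure zero), $T_{\lambda}$ is defined $\fm$-a.e.\ on $A$, and by construction $T_{\lambda}(A) \subset A_{\lambda}$ up to an $\fm$-null set.

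Next I will establish the pointwise Jacobian bound
\[
\bJ_{\lambda}(y) \ge \lambda \left\{ \frac{\bs_{K/(N-1)}(\lambda d(x,y))}{\bs_{K/(N-1)}(d(x,y))} \right\}^{N-1}
\qquad \text{for $\fm$-a.e.\ } y \in A,
\]
where $\bJ_{\lambda}$ denotes the Jacobian of $T_{\lambda}$ with respect to $\fm$. The cleanest route is an approximation argument: fix small $\varepsilon>0$, let $\mu_0 := \fm(A)^{-1}\fm|_A$ and $\mu_1^{\varepsilon} := \fm(B^+(x,\varepsilon))^{-1}\fm|_{B^+(x,\varepsilon)}$, both absolutely continuous. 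Apply \eqref{eq:Jcon} (with $\Ric_N \ge K$ and the transport running from $\mu_0$ to $\mu_1^{\varepsilon}$) to the induced interpolation map $T_{\lambda}^{\varepsilon}$. Since $\mu_1^{\varepsilon}$ concentrates on $B^+(x,\varepsilon)$, the associated push-forward Jacobian $\bJ_1^{\varepsilon}(y)$ is of order $\fm(B^+(x,\varepsilon))/\fm(A) \to 0$, so the second term in \eqref{eq:Jcon} drops out as $\varepsilon \downarrow 0$, leaving
\[
\bJ_{\lambda}(y) \ge \bm{\tau}_{K,N}^{(\lambda)}(d(x,y))^{N}
= \lambda \left\{ \frac{\bs_{K/(N-1)}(\lambda d(x,y))}{\bs_{K/(N-1)}(d(x,y))} \right\}^{N-1},
\]
by the definition \eqref{eq:btau} of $\bm{\tau}_{K,N}^{(\lambda)}$. (An alternative that bypasses the optimal-transport limit is a direct Jacobi field computation: the Jacobian of $T_{\lambda}$ along $\eta_{x,y}$ is governed by Jacobi fields vanishing at $t=0$, and the bound $\Ric_N \ge K$ yields the stated comparison via the Bishop-type differential inequality satisfied by $\bJ_{\lambda}^{1/(N-1)}$.)

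Finally, I change variables:
\[
\fm(A_{\lambda}) \ge \fm\bigl( T_{\lambda}(A) \bigr) = \int_A \bJ_{\lambda}(y) \,\fm(dy)
\ge \lambda \inf_{y \in A} \left\{ \frac{\bs_{K/(N-1)}(\lambda d(x,y))}{\bs_{K/(N-1)}(d(x,y))} \right\}^{N-1} \fm(A),
\]
which is \eqref{eq:MCP}. The main obstacle is the middle step: the Jacobian inequality in \cite{Oint} is stated only for non-degenerate transports between absolutely continuous measures, and one must verify either that the optimal transports $\pi^{\varepsilon}$ and their interpolating Jacobians behave well as $\varepsilon \downarrow 0$ (ruling out concentration losses at the cut locus of $x$), or alternatively carry out a self-contained Jacobi field analysis along each geodesic $\eta_{x,y}$ adapted to the weighted measure $\fm = \e^{-\Psi}\vol_{\dot{\eta}}$. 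Either route is routine given the machinery already developed in \S\ref{sc:CD}, but this is the only nontrivial analytic point.
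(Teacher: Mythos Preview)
The paper does not actually prove Theorem~\ref{th:MCP}; it is quoted as a known result with references to \cite{Omcp,StII} (and implicitly \cite{Oint} for the Finsler setting). So there is no ``paper's own proof'' to compare against, only your proposal to assess on its merits.

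Your optimal-transport approximation has a direction mix-up that matters precisely because $(M,F)$ is non-reversible. With $\mu_0$ on $A$ and $\mu_1^{\varepsilon}$ near $x$, the $W_2$-geodesic runs along minimal geodesics \emph{from} $y\in A$ \emph{to} $x$; in the limit the interpolation map is $y\mapsto\eta_{y,x}(\lambda)$, not $y\mapsto\eta_{x,y}(\lambda)$, so its image is not $A_\lambda$ as defined in the statement. Correspondingly, when $\bJ_1^{\varepsilon}\to 0$ in \eqref{eq:Jcon} it is the \emph{second} summand that vanishes, leaving
\[
\bJ_\lambda(y)\ \ge\ \bm{\tau}_{K,N}^{(1-\lambda)}\bigl(d(y,x)\bigr)^N,
\]
not $\bm{\tau}_{K,N}^{(\lambda)}(d(x,y))^N$. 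In the reversible case these two discrepancies cancel after the substitution $\lambda\mapsto 1-\lambda$; in the non-reversible case they do not, because $\eta_{y,x}$ is generally unrelated to the reverse of $\eta_{x,y}$, and $d(y,x)\neq d(x,y)$.

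The fix is to swap the roles: take $\mu_0^{\varepsilon}$ uniform on $B^+(x,\varepsilon)$ and $\mu_1$ uniform on $A$, so that the transport runs along geodesics from (near) $x$ to $A$. Then $\bJ_1^{\varepsilon}\to\infty$ and the surviving term of \eqref{eq:Jcon} gives $\bJ_\lambda^{\varepsilon}/\bJ_1^{\varepsilon}\ge \bm{\tau}_{K,N}^{(\lambda)}(d(x,y))^N$; via the Jacobian equation this bounds $\rho_\lambda$ from above and hence $\fm(\supp\mu_\lambda)\ge \fm(A)\inf_{y\in A}\bm{\tau}_{K,N}^{(\lambda)}(d(x,y))^N$, which is \eqref{eq:MCP}. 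Your alternative route via a direct Jacobi-field (Bishop-type) comparison along each $\eta_{x,y}$ is also correct and is in fact the argument behind the cited references; either repaired version is fine.
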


We remark that the converse (\eqref{eq:MCP} $\Rightarrow$ $\Ric_N \ge K$) holds true only when $N=n$
(see \cite[Remark~5.6]{StII}).
Hence the measure contraction property \eqref{eq:MCP} is strictly weaker than $\CD(K,N)$ in general.

\begin{remark}\label{rm:n=1}
Although Theorems~\ref{th:CD}, \ref{th:MCP} are usually stated for $n \ge 2$,
the case of $n=1$ and $N \in (-\infty,0] \cup (1,\infty]$
is analyzed in the same way more easily (the Ricci curvature term does not appear).
In fact, $\Ric_N \ge K$ (which is read as $\psi''-(\psi')^2/(N-1) \ge K$
with $\fm=\e^{-\psi} \fL^1$) implies the sharp Brunn--Minkowski inequality
via \eqref{eq:CDbd} (see Remark~\ref{rm:CDbd} and \cite{CM2} for details).
From the Brunn--Minkowski inequality one can recover $\Ric_N \ge K$
as well as derive the measure contraction property.
In the excluded case of $n=N=1$,
we have the concrete description of $(M,F,\fm)$ and only $K=0$ makes sense
(see Remark~\ref{rm:wRic}(c)).
\end{remark}

\section{Construction of needle decompositions}\label{sc:needle}

This section is devoted to the construction of a needle decomposition
associated with a $1$-Lipschitz function.
The construction closely follows the strategy developed in optimal transport theory:
we refer to \cite{EG} for the Euclidean case, \cite{FM,Kl} for the Riemannian case,
and \cite{BC,Ca,CM} for the general metric measure setting.

Let the triple $(M,F,\fm)$ be as in the previous section.
Throughout this section, we fix a $1$-Lipschitz function $\varphi:M \lra \R$
in the sense of \eqref{eq:L-Lip}.

\subsection{Transport rays}\label{ssc:trray}

Define
\[ \Gamma_{\varphi}:=\{ (x,y) \in M \times M \,|\, \varphi(y)-\varphi(x)=d(x,y) \}. \]
Clearly $\Gamma_{\varphi}$ is a closed set containing the diagonal set $\{ (x,x) \,|\, x \in M \}$.
A relation with optimal transport theory can be seen in the next lemma.

\begin{lemma}\label{lm:d-cm}
The set $\Gamma_{\varphi}$ is $d$-cyclically monotone.
\end{lemma}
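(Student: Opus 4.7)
The plan is to exploit the defining equality on $\Gamma_\varphi$ on the diagonal pairs and the $1$-Lipschitz estimate on the shifted pairs, and then notice that the telescoping sum of $\varphi$-values along a cyclic permutation cancels.

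More concretely, fix a finite family $\{(x_i,y_i)\}_{i=1}^l \subset \Gamma_\varphi$ and set $y_{l+1}:=y_1$. First I would record, for each $i$, the identity
\[ \varphi(y_i)-\varphi(x_i) = d(x_i,y_i) \]
coming directly from the definition of $\Gamma_\varphi$, and the inequality
\[ \varphi(y_{i+1})-\varphi(x_i) \le d(x_i,y_{i+1}) \]
coming from the right-hand inequality in \eqref{eq:L-Lip} with $L=1$ (applied to the ordered pair $(x_i,y_{i+1})$, which is the relevant direction since $d$ is asymmetric). Summing in $i$ gives
\[ \sum_{i=1}^l d(x_i,y_i) = \sum_{i=1}^l \bigl(\varphi(y_i)-\varphi(x_i)\bigr), \qquad
\sum_{i=1}^l \bigl(\varphi(y_{i+1})-\varphi(x_i)\bigr) \le \sum_{i=1}^l d(x_i,y_{i+1}). \]

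The key observation is then that the cyclic reindexing $i \mapsto i+1$ on the $y$-terms is a permutation of $\{1,\dots,l\}$, so
\[ \sum_{i=1}^l \varphi(y_{i+1}) = \sum_{i=1}^l \varphi(y_i), \]
whence the two left-hand sides above are equal. Chaining the equality and the inequality yields exactly the $d$-cyclical monotonicity inequality from Theorem~\ref{th:cymo} with $p=1$.

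There is no real obstacle here: the only point that needs a little attention is the asymmetry of $d$, which forces one to use the $1$-Lipschitz bound in the correct direction, i.e.\ the upper bound $\varphi(y_{i+1})-\varphi(x_i) \le d(x_i,y_{i+1})$ rather than its asymmetric counterpart. Everything else is a one-line telescoping argument, so the lemma falls out immediately once the two bounds are written down.
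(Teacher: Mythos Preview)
Your proof is correct and is essentially identical to the paper's own argument: both use the defining equality $\varphi(y_i)-\varphi(x_i)=d(x_i,y_i)$ on $\Gamma_\varphi$, the $1$-Lipschitz bound $\varphi(y_{i+1})-\varphi(x_i)\le d(x_i,y_{i+1})$, and the cyclic reindexing $\sum_i \varphi(y_{i+1})=\sum_i \varphi(y_i)$ to chain these into the desired inequality. The paper just compresses the three steps into a single displayed line.
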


\begin{proof}
For any finite set $\{(x_i,y_i)\}_{i=1}^l \subset \Gamma_{\varphi}$, we have
\[ \sum_{i=1}^l d(x_i,y_{i+1}) \ge \sum_{i=1}^l \{\varphi(y_{i+1})-\varphi(x_i)\}
 =\sum_{i=1}^l \{\varphi(y_i)-\varphi(x_i)\} =\sum_{i=1}^l d(x_i,y_i), \]
where $y_{l+1}:=y_1$.
$\qedd$
\end{proof}

If $(x,y) \in \Gamma_{\varphi}$, then we have,
along any unit speed minimal geodesic $\gamma:[0,d(x,y)] \lra M$ from $x$ to $y$,
\[ \varphi\big( \gamma(t) \big)=\varphi(x)+t \qquad \text{for all}\ t \in [0,d(x,y)]. \]
In particular, $(\gamma(s),\gamma(t)) \in \Gamma_{\varphi}$ for all $0 \le s \le t \le d(x,y)$.
This observation leads us to the following definition.

\begin{definition}[Transport rays]\label{df:ray}
We call a unit speed geodesic $\gamma:\Dom(\gamma) \lra M$ from a closed interval
$\Dom(\gamma) \subset \R$ a \emph{transport ray} associated with $\varphi$ if
\begin{enumerate}[(1)]
\item
$(\gamma(s),\gamma(t)) \in \Gamma_{\varphi}$ for all $s,t \in \Dom(\gamma)$ with $s \le t$;

\item
$\gamma$ cannot be extended to a longer geodesic satisfying the above property (1).
\end{enumerate}
The domain $\Dom(\gamma) \subset \R$ will be taken so as to satisfy $\varphi(\gamma(t))=t$
for all $t \in \Dom(\gamma)$.
If $\Dom(\gamma)$ is a singleton,
then we say that $\gamma$ is a \emph{degenerate} transport ray.
\end{definition}

\begin{example}\label{ex:V}
In the simple example $\varphi(x)=|x|$ on $\R$ (with the standard distance),
we have two transport rays:
$\gamma_1(t)=t$ and $\gamma_2(t)=-t$ for $t \in [0,\infty)$.
\end{example}

Analogous to the set of \emph{strain points} in \cite{Kl},
let us introduce
\begin{equation}\label{eq:M_phi}
M_{\varphi}:=\{ x \in M \,|\, \exists w,y \in M \setminus \{x\}
 \ \text{such that}\ (w,x),\, (x,y) \in \Gamma_{\varphi} \}.
\end{equation}
The following property is easily observed, we state it as a lemma for later use.

\begin{lemma}\label{lm:conca}
Given a triplet $w,x,y \in M$ as in \eqref{eq:M_phi},
let $\gamma_-:[-d(w,x),0] \lra M$ and $\gamma_+:[0,d(x,y)] \lra M$
be any minimal geodesics from $w$ to $x$ and $x$ to $y$, respectively.
Then along the concatenation $\gamma:=\gamma_- \cup \gamma_+:[-d(w,x),d(x,y)] \lra M$
we have
\[ \varphi\big( \gamma(t) \big) -\varphi\big( \gamma(s) \big)=t-s
 \qquad \text{for all}\ s<t. \]
In particular, $\gamma$ is a minimal geodesic from $w$ to $y$,
and $\gamma_-$ and $\gamma_+$ are
unique minimal geodesics from $w$ to $x$ and $x$ to $y$, respectively.
\end{lemma}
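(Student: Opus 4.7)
The plan is to deduce everything from two ingredients already at hand: the observation stated immediately before Definition~\ref{df:ray}, saying that if $(a,b)\in\Gamma_\varphi$ then $\varphi$ grows at unit rate along any unit speed minimal geodesic from $a$ to $b$; together with the standard Finsler facts that a unit speed minimizer is automatically a smooth $F$-geodesic, and that an $F$-geodesic is determined by a single point and the velocity there (ODE uniqueness of the geodesic equation).

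First I would apply that preceding observation to $(w,x)$ along $\gamma_-$ and to $(x,y)$ along $\gamma_+$, obtaining $\varphi(\gamma(t))-\varphi(\gamma(s))=t-s$ for all $s<t$ lying in the same sub-interval $[-d(w,x),0]$ or $[0,d(x,y)]$. For the mixed case $s<0<t$, I would split through $\gamma(0)=x$ and add:
\[
\varphi(\gamma(t))-\varphi(\gamma(s))
=\bigl(\varphi(\gamma(t))-\varphi(x)\bigr)+\bigl(\varphi(x)-\varphi(\gamma(s))\bigr)
=t-s,
\]
which yields the displayed identity on the whole of $[-d(w,x),d(x,y)]$.

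For the ``in particular'' clauses, combining the $1$-Lipschitzness of $\varphi$ with the triangle inequality gives
\[
d(w,x)+d(x,y)\ \ge\ d(w,y)\ \ge\ \varphi(y)-\varphi(w)\ =\ d(w,x)+d(x,y),
\]
so $\gamma$ has length $d(w,y)$ and is a unit speed minimizer from $w$ to $y$; being such, it is in fact a smooth $F$-geodesic (a piecewise $\cC^\infty$ minimizer in a Finsler manifold is genuinely smooth, since otherwise a corner could be locally shortcut). Uniqueness then follows: if $\gamma_-'$ is any other minimal geodesic from $w$ to $x$, the concatenation $\gamma_-'\cup\gamma_+$ again has length $d(w,y)$ and is therefore also a smooth $F$-geodesic, so its left and right velocities at $x$ coincide; the right velocity equals $\dot\gamma_+(0)$, which also equals the terminal velocity of $\gamma_-$. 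Two $F$-geodesics ending at $x$ with the same velocity at $x$ coincide on any common interval by ODE uniqueness, hence $\gamma_-'=\gamma_-$. The argument for $\gamma_+$ is symmetric.

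The one point that needs care, given the non-reversibility of $F$, is the upgrade from ``piecewise smooth minimizer'' to ``smooth geodesic'' at the junction $x$ together with the way uniqueness is concluded from it: reversing a geodesic is not in general an $F$-geodesic, so I phrase the uniqueness entirely in terms of \emph{matching forward velocities} at $x$ and invoke forward ODE uniqueness of the geodesic equation, avoiding any appeal to reversed curves.
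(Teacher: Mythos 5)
Your proof is correct and complete; the paper itself states this lemma without proof (as ``easily observed''), and your argument --- unit growth of $\varphi$ on each piece plus additivity across $x$, then the Lipschitz/triangle-inequality sandwich to get minimality, smoothness at the corner, and ODE uniqueness of geodesics from the matched velocity at $x$ --- is exactly the intended one. Your care in phrasing uniqueness via matching forward velocities and the uniqueness of solutions of the geodesic ODE in its own parametrization, rather than via reversed curves, is precisely the right way to handle the non-reversibility of $F$.
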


On the set $M_{\varphi}$, because of the competition between the $1$-Lipschitz condition
and the defining property of $\Gamma_{\varphi}$, $\varphi$ cannot behave badly.
The next lemma is an analogue of \cite[Lemma~10]{FM}.

\begin{lemma}[Differentiability of $\varphi$ on $M_{\varphi}$]\label{lm:diff}
Given $x \in M_{\varphi}$, let $\gamma:[-\ve,\ve] \lra M$ be a unit speed geodesic
satisfying $\gamma(0)=x$ and $(\gamma(-\ve),x), (x,\gamma(\ve)) \in \Gamma_{\varphi}$.
Then $\varphi$ is differentiable at $x$ with $\nabla \varphi(x)=\dot{\gamma}(0)$.
In particular, such a geodesic $\gamma$ is unique.
\end{lemma}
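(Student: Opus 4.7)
The strategy is to sandwich $\varphi$ near $x$ between two $\cC^\infty$ barriers that touch $\varphi$ at $x$ with a common differential, and then conclude differentiability by a squeeze. First I introduce the barriers
\[
\hat\varphi(y) := \varphi\big(\gamma(-\varepsilon)\big) + d\big(\gamma(-\varepsilon),y\big), \qquad
\tilde\varphi(y) := \varphi\big(\gamma(\varepsilon)\big) - d\big(y,\gamma(\varepsilon)\big).
\]
The triangle inequality makes both $1$-Lipschitz with respect to $F$, and the hypothesis $(\gamma(-\varepsilon),x),\,(x,\gamma(\varepsilon))\in\Gamma_\varphi$ forces $\hat\varphi(x)=\tilde\varphi(x)=\varphi(x)$. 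Together with the $1$-Lipschitz property of $\varphi$ this yields the global sandwich $\tilde\varphi\le\varphi\le\hat\varphi$.

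Next I would show that $\hat\varphi$ and $\tilde\varphi$ are $\cC^\infty$ on a neighborhood of $x$ with a common differential there. By Lemma~\ref{lm:conca}, $\gamma|_{[-\varepsilon,0]}$ and $\gamma|_{[0,\varepsilon]}$ are the unique minimal geodesics between their endpoints, and since $\gamma$ itself remains minimal past $x$ on either side, $x$ is not a cut point of $\gamma(-\varepsilon)$ with respect to $F$ and, symmetrically, $\gamma(\varepsilon)$ is not a cut point of $x$ with respect to $\rev{F}$; shrinking $\varepsilon$ if necessary, the two distance functions are therefore $\cC^\infty$ near $x$. The Finsler first-variation formula for the distance from a point, combined with $F(\dot\gamma(0))=1$, then gives
\[
d\hat\varphi(x)(v) \,=\, g_{\dot\gamma(0)}\big(\dot\gamma(0),v\big) \,=\, d\tilde\varphi(x)(v) \qquad\text{for every } v\in T_xM,
\]
so $\nabla\hat\varphi(x)=\nabla\tilde\varphi(x)=\dot\gamma(0)$ via the Legendre transform; denote this common covector by $\alpha\in T_x^*M$.

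Finally, picking a chart around $x$, smoothness gives $\hat\varphi(y)-\varphi(x)=\alpha(y-x)+o(|y-x|)$ and similarly for $\tilde\varphi$, and the sandwich squeezes $\varphi(y)-\varphi(x)=\alpha(y-x)+o(|y-x|)$. Hence $\varphi$ is differentiable at $x$ with $d\varphi(x)=\alpha$ and $\nabla\varphi(x)=\cL^*(\alpha)=\dot\gamma(0)$. Uniqueness of $\gamma$ is then immediate: any admissible unit-speed geodesic through $x$ has initial velocity $\nabla\varphi(x)$, and is determined from this datum by the geodesic flow.

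The only delicacy I anticipate is the sign bookkeeping for $\tilde\varphi$, whose gradient is most naturally computed through the reverse Finsler structure $\rev{F}$; the identity $\rev\nabla f=-\nabla(-f)$ must be applied carefully so that $\nabla\tilde\varphi(x)$ lands on $+\dot\gamma(0)$ rather than $-\dot\gamma(0)$.
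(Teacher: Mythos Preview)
Your proposal is correct and is essentially the same barrier/squeeze argument as the paper's proof: your $\hat\varphi$ and $\tilde\varphi$ are exactly the paper's upper and lower bounds $\varphi(x)+d(\gamma(-\ve),y)-\ve$ and $\varphi(x)-d(y,\gamma(\ve))+\ve$, and the paper likewise shrinks $\ve$ to make the two distance functions smooth near $x$ before identifying the common gradient $\dot\gamma(0)$. Your additional remarks on cut points, the first-variation formula, and the $\rev{F}$ sign bookkeeping are correct elaborations of steps the paper leaves implicit.
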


\begin{proof}
By taking smaller $\ve>0$ if necessary, we can assume that the distance functions
$d(\gamma(-\ve),\cdot)$ and $d(\cdot,\gamma(\ve))$ are smooth in a neighborhood of $x$.
We observe from $\varphi(\gamma(t))=\varphi(x)+t$ that, for any $y \in M$,
\begin{align*}
\varphi(y)-\varphi(x) &=\varphi(y)-\varphi\big( \gamma(-\ve) \big) -\ve 
 \le d\big( \gamma(-\ve),y \big) -\ve, \\
\varphi(y)-\varphi(x) &=\varphi(y)-\varphi\big( \gamma(\ve) \big) +\ve 
 \ge -d\big( y,\gamma(\ve) \big) +\ve.
\end{align*}
Then the claim follows from
\[ \nabla\big[ d\big( \gamma(-\ve),\cdot \big) \big](x) =\nabla\big[ {-}d\big( \cdot,\gamma(\ve) \big) \big](x)
 =\dot{\gamma}(0). \]
$\qedd$
\end{proof}

\subsection{Transport sets}\label{ssc:DTB}

For each point $x \in M$, there are three possibilities:

\begin{enumerate}[(I)]
\item
There is no non-degenerate transport ray containing $x$.
The set of such points $x$ will be denoted by $\bm{D}_{\varphi}$.

\item
There is exactly one non-degenerate transport ray containing $x$.
The set of such points will be called the \emph{transport set} associated with $\varphi$
and denoted by $\bm{T}_{\varphi}$.

\item
There are more than two non-degenerate transport rays containing $x$.
The set of such points will be denoted by $\bm{B}_{\varphi}$.
\end{enumerate}

Clearly $M=\bm{D}_{\varphi} \sqcup \bm{T}_{\varphi} \sqcup \bm{B}_{\varphi}$
and $M_{\varphi} \subset \bm{T}_{\varphi}$ by Lemma~\ref{lm:diff}.
Transport rays give a geodesic foliation of $\bm{T}_{\varphi}$,
that we call the \emph{needle decomposition} associated with $\varphi$.
In the case (III), thanks to Lemma~\ref{lm:diff},
$x$ cannot be an internal point of those transport rays containing $x$.
Precisely, $x$ is either the starting point of all the rays or the terminal point of all the rays
(thus one may say that transport rays branch out from $x$).
Let us introduce the decomposition
$\bm{B}_{\varphi}=\bm{B}_{\varphi}^+ \sqcup \bm{B}_{\varphi}^-$
into starting and terminal points for later convenience, that is,
\begin{equation}\label{eq:B+}
\begin{array}{l}
\bm{B}_{\varphi}^+ :=\{x \in \bm{B}_{\varphi} \,|\, (x,y) \in \Gamma_{\varphi}\ \text{for some}\ y \neq x\},
 \medskip\\
\bm{B}_{\varphi}^- :=\{x \in \bm{B}_{\varphi} \,|\, (w,x) \in \Gamma_{\varphi}\ \text{for some}\ w \neq x\}.
\end{array}
\end{equation}
In the simple example in Example~\ref{ex:V}, we have
$\bm{T}_{\varphi}=\R \setminus \{0\}$ and $\bm{B}_{\varphi}^+=\{0\}$.

\begin{lemma}\label{lm:DTB}
The sets $M_{\varphi}$, $\bm{T}_{\varphi} \sqcup \bm{B}_{\varphi}$,
$\bm{B}_{\varphi}^+$ and $\bm{B}_{\varphi}^-$ are $\sigma$-compact.
In particular, $\bm{D}_{\varphi}$ and $\bm{T}_{\varphi}$ are Borel sets.
\end{lemma}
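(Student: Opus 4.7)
The plan is to exploit the closedness of $\Gamma_\varphi$ together with the continuity of $d$ and $\varphi$, and the $\sigma$-compactness of $M$ (which, as a connected smooth manifold, is second-countable hence $\sigma$-compact); fix once and for all a compact exhaustion $M = \bigcup_{j \in \N} K_j$. First I would introduce the auxiliary sets
\[
\bm{T}^+ := \{x \in M : (x,y) \in \Gamma_\varphi \text{ for some } y \neq x\}, \qquad \bm{T}^- := \{x \in M : (w,x) \in \Gamma_\varphi \text{ for some } w \neq x\}.
\]
Using that $y \neq x$ is equivalent to $d(x,y) > 0$ by positive definiteness of the Finsler distance, one writes
\[
\bm{T}^+ = \bigcup_{k,j,j' \in \N} \bigl\{ x \in K_j : \exists y \in K_{j'} \text{ with } (x,y) \in \Gamma_\varphi \text{ and } d(x,y) \geq 1/k \bigr\},
\]
and each set in the union is the image under projection to the first factor of a closed subset of the compact product $K_j \times K_{j'}$, hence is compact; so $\bm{T}^+$ is $\sigma$-compact, and symmetrically for $\bm{T}^-$. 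Directly from the definitions one reads off $M_\varphi = \bm{T}^+ \cap \bm{T}^-$ and $\bm{T}_\varphi \sqcup \bm{B}_\varphi = \bm{T}^+ \cup \bm{T}^-$ (the only nontrivial inclusion in the latter uses forward completeness and maximal extension of minimal geodesics to produce a transport ray through any $x \in \bm{T}^+$), and since finite intersections and finite unions of $\sigma$-compact sets are $\sigma$-compact, both are $\sigma$-compact.

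The main step is to characterize
\[
\bm{B}^+_\varphi = \{ x \in M : \exists y_1, y_2 \in M \text{ with } (x,y_i) \in \Gamma_\varphi,\ \varphi(y_1) = \varphi(y_2),\ y_1 \neq y_2 \}.
\]
For the forward inclusion, two distinct maximal transport rays starting at $x$ must have distinct initial velocities (else uniqueness of the geodesic ODE plus maximality would identify them), so picking small $t > 0$ in both domains and setting $y_i := \gamma_i(\varphi(x) + t)$ produces a valid pair. For the reverse inclusion, each $(x,y_i) \in \Gamma_\varphi$ extends to a transport ray through $x$; on any single transport ray the value of $\varphi$ determines the point, so $y_1 \neq y_2$ together with $\varphi(y_1) = \varphi(y_2)$ forces the two rays to be distinct (so $x \in \bm{B}_\varphi$, neither ray being able to extend backward through $x$ by Lemma~\ref{lm:diff}), while $d(x,y_1) = \varphi(y_1) - \varphi(x) = d(x,y_2) > 0$ (otherwise $y_1 = x = y_2$) forces $y_1 \neq x$, placing $x$ in $\bm{B}^+_\varphi$.

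With the characterization in hand, for each $k, j, j_1, j_2 \in \N$ the set of triples $(x,y_1,y_2) \in K_j \times K_{j_1} \times K_{j_2}$ satisfying the closed conditions $(x,y_i) \in \Gamma_\varphi$, $\varphi(y_1) = \varphi(y_2)$, and $d(y_1,y_2) + d(y_2,y_1) \geq 1/k$ is closed in the compact product, hence compact, and its first-factor projection is compact; the countable union of these projections is $\bm{B}^+_\varphi$, which is therefore $\sigma$-compact, and symmetrically for $\bm{B}^-_\varphi$. Finally, $\bm{T}_\varphi = (\bm{T}^+ \cup \bm{T}^-) \setminus (\bm{B}^+_\varphi \cup \bm{B}^-_\varphi)$ and $\bm{D}_\varphi = M \setminus (\bm{T}^+ \cup \bm{T}^-)$ are respectively a difference and a complement of $F_\sigma$ sets, hence Borel. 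The main obstacle is the characterization of $\bm{B}^+_\varphi$: the naive description ``at least two distinct maximal transport rays start at $x$'' does not manifestly express a $\sigma$-compact set, and rephrasing it via $\varphi(y_1) = \varphi(y_2)$ converts it into a closed condition on triples that pulls back correctly through the compact exhaustion.
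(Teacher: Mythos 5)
Your proof is correct and follows essentially the same route as the paper: each set is expressed as a countable union of projections of closed subsets of products of compact pieces, with the non-degeneracy conditions $y \neq x$, $y_1 \neq y_2$ made closed by quantifying $d \ge 1/k$. The only cosmetic difference is that the paper detects branching via the condition $d(x,y)=d(x,z)>0$ with $y \neq z$ rather than via $\varphi(y_1)=\varphi(y_2)$, which is the same condition on $\Gamma_{\varphi}$ since $d(x,y)=\varphi(y)-\varphi(x)$ there.
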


\begin{proof}
Recall that $\Gamma_{\varphi} \subset M \times M$ is a closed set.
Denoting by $p_k:M^3 \lra M$ ($k=1,2,3$)
the projection to the $k$-th component, we observe that
\begin{align*}
M_{\varphi} &= p_2 \big( \{(w,x,y) \in M^3 \,|\,
 (w,x) \in \Gamma_{\varphi},\, (x,y) \in \Gamma_{\varphi},\,
 w \neq x,\, y \neq x \} \big) \\
&= \bigcup_{i \in \N} p_2 \big( \{(w,x,y) \in M^3 \,|\,
 (w,x) \in \Gamma_{\varphi}, (x,y) \in \Gamma_{\varphi}, d(w,x) \ge i^{-1},
 d(x,y) \ge i^{-1} \} \big)
\end{align*}
is $\sigma$-compact.
We similarly see that
\begin{align*}
\bm{T}_{\varphi} \sqcup \bm{B}_{\varphi}
&=\bigcup_{k=1}^2 p_k\big( \{(x,y) \in \Gamma_{\varphi} \,|\, x \neq y\} \big), \\
\bm{B}_{\varphi}^+
&=p_1\big( \{(x,y,z) \in M^3 \,|\, (x,y),(x,z) \in \Gamma_{\varphi},\, d(x,y)=d(x,z)>0,\, y \neq z\} \big),
\end{align*}
and $\bm{B}_{\varphi}^-$ are all $\sigma$-compact.
$\qedd$
\end{proof}

The set $\bm{D}_{\varphi}$ is in general large,
we have even $\bm{D}_{\varphi}=M$ if $\varphi$ is ($1-\ve$)-Lipschitz
for some $\ve>0$.
The transport set $\bm{T}_{\varphi}$ is our main object,
being a full measure set in the situation we consider in applications
(by virtue of Proposition~\ref{pr:mean0}).
One can see that $\fm(\bm{B}_{\varphi})=0$ always holds true
with the help of the following useful lemma, which has played crucial roles in \cite{Ca,Ca2,CM}.

\begin{lemma}\label{lm:d^2}
Suppose that a subset $\Xi \subset \Gamma_{\varphi}$ satisfies
$\sup_{(x,y) \in \Xi}\varphi(x) \le \inf_{(x,y) \in \Xi}\varphi(y)$ and
\begin{equation}\label{eq:d^2}
\{\varphi(x_2)-\varphi(x_1)\} \{\varphi(y_2)-\varphi(y_1)\} \ge 0\qquad
 \text{for all}\ (x_1,y_1),(x_2,y_2) \in \Xi.
\end{equation}
Then $\Xi$ is $d^2$-cyclically monotone.
\end{lemma}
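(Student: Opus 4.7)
My plan is to reduce the $d^2$-cyclical monotonicity of $\Xi$ to a one-dimensional rearrangement inequality, exploiting the fact that on $\Gamma_{\varphi}$ the distance coincides with the increment of $\varphi$.

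Fix a finite subfamily $\{(x_i,y_i)\}_{i=1}^l \subset \Xi$ and abbreviate $p_i := \varphi(x_i)$, $q_i := \varphi(y_i)$. Since $(x_i,y_i) \in \Gamma_{\varphi}$, I get $d(x_i,y_i) = q_i - p_i$, hence $d^2(x_i,y_i) = (q_i-p_i)^2$. For the off-diagonal terms, the $1$-Lipschitz property of $\varphi$ yields $d(x_i,y_{i+1}) \ge q_{i+1} - p_i$; at this point the hypothesis $\sup_{\Xi}\varphi(x) \le \inf_{\Xi}\varphi(y)$ enters crucially, since it forces $q_{i+1} - p_i \ge 0$, so the inequality survives squaring and gives $d^2(x_i,y_{i+1}) \ge (q_{i+1}-p_i)^2$. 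It therefore suffices to show
$$\sum_{i=1}^l (q_i - p_i)^2 \le \sum_{i=1}^l (q_{i+1} - p_i)^2.$$

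Expanding the squares and using that the cyclic shift preserves $\sum_i q_i^2$, this reduces to the purely one-dimensional inequality $\sum_i p_i q_i \ge \sum_i p_i q_{i+1}$. Here I would invoke the comonotonicity hypothesis \eqref{eq:d^2}: it says exactly that the pairs $(p_i,q_i)$ admit a simultaneous sorting, i.e., there is a permutation $\tau$ of $\{1,\dots,l\}$ for which both $(p_{\tau(j)})_j$ and $(q_{\tau(j)})_j$ are non-decreasing. Relabeling $\tilde p_j := p_{\tau(j)}$, $\tilde q_j := q_{\tau(j)}$ and defining the permutation $\sigma$ by $\tau(\sigma(j)) \equiv \tau(j)+1 \pmod{l}$, the inequality becomes $\sum_j \tilde p_j \tilde q_j \ge \sum_j \tilde p_j \tilde q_{\sigma(j)}$, which is an instance of the classical rearrangement inequality applied to two non-decreasing sequences.

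The only delicate point, and the place where the sup/inf hypothesis is indispensable, is the squaring step that upgrades the $1$-Lipschitz bound on $d(x_i,y_{i+1})$ to a bound on $d^2(x_i,y_{i+1})$. Everything else is bookkeeping: passing from $d$ to $\varphi$-increments on $\Gamma_{\varphi}$, and invoking \eqref{eq:d^2} to justify the simultaneous sorting that permits the rearrangement inequality.
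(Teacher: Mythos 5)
Your proposal is correct, and its skeleton coincides with the paper's: the paper likewise reduces everything to the pairs $(\varphi(x_i),\varphi(y_i))$, using $d(x_i,y_i)=\varphi(y_i)-\varphi(x_i)$ on $\Gamma_{\varphi}$ and the observation that the hypothesis $\sup_{\Xi}\varphi(x)\le\inf_{\Xi}\varphi(y)$ forces $0\le\varphi(y_{i+1})-\varphi(x_i)\le d(x_i,y_{i+1})$, so that squaring is legitimate --- exactly your ``delicate point.'' Where you diverge is in the one-dimensional core. The paper proves directly that the comonotone set $\Xi'=\{(\varphi(x),\varphi(y))\}\subset\R^2$ is $|\cdot|^2$-cyclically monotone by induction on $l$: using \eqref{eq:d^2} it selects an index minimizing both coordinates simultaneously, peels it off, and checks that the resulting boundary terms contribute $2(t_2-t_1)(s_l-s_1)\ge 0$. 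You instead expand the squares, cancel $\sum_i q_i^2$ by cyclic invariance, and reduce to $\sum_i p_iq_i\ge\sum_i p_iq_{i+1}$, which you settle by simultaneously sorting (possible precisely because \eqref{eq:d^2} is comonotonicity: sort by $p$ and break ties by $q$) and invoking the classical rearrangement inequality. Both arguments are elementary and complete; yours isolates the cross-term and outsources the combinatorics to a standard inequality, while the paper's induction is self-contained. The only cosmetic gap is that you assert rather than verify the existence of the simultaneous sorting permutation $\tau$, but this follows immediately from \eqref{eq:d^2} as just indicated.
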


\begin{proof}
We first see that the set
\[ \Xi':=\left\{ \big( \varphi(x),\varphi(y) \big) \,\big|\, (x,y) \in \Xi \right\} \subset \R \times \R \]
is $|\cdot|^2$-cyclically monotone by induction,
where $|\cdot|$ is the standard distance.
For any pair $(x_1,y_1),(x_2,y_2) \in \Xi$,
the hypothesis \eqref{eq:d^2} immediately yields (with $y_3:=y_1$)
\[ \sum_{i=1}^2 |\varphi(y_{i+1})-\varphi(x_i)|^2
 -\sum_{i=1}^2 |\varphi(y_i)-\varphi(x_i)|^2
 =2\{\varphi(x_2)-\varphi(x_1)\} \{\varphi(y_2)-\varphi(y_1)\} \ge 0. \]
Suppose that the claim holds true for any set consisting of $(l-1)$ elements in $\Xi'$,
and take arbitrary $\{(x_i,y_i)\}_{i=1}^l \subset \Xi$.
It follows from \eqref{eq:d^2} that
$\varphi(x_i)<\varphi(x_j)$ only if $\varphi(y_i) \le \varphi(y_j)$.
Hence we can assume without loss of generality that
$\varphi(x_1)=\min_i \varphi(x_i)$ as well as $\varphi(y_1)=\min_i \varphi(y_i)$.
Then, putting $s_i:=\varphi(x_i)$ and $t_i:=\varphi(y_i)$ for simplicity,
we have (with $t_{l+1}:=t_1$)
\begin{align*}
\sum_{i=1}^l |t_{i+1}-s_i|^2 -\sum_{i=1}^l |t_i-s_i|^2
&\ge \{ (t_2-s_1)^2 +(t_1-s_l)^2 -(t_2-s_l)^2 \} -(t_1-s_1)^2 \\
&=2(t_2-t_1)(s_l-s_1) \ge 0,
\end{align*}
where we applied the claim to $\{(s_i,t_i)\}_{i=2}^l$ in the inequality.
Therefore $\Xi'$ is $|\cdot|^2$-cyclically monotone.

Now, for any $\{(x_i,y_i)\}_{i=1}^l \subset \Xi$,
we observe from $\sup_{(x,y) \in \Xi}\varphi(x) \le \inf_{(x,y) \in \Xi}\varphi(y)$
that $0 \le \varphi(y_{i+1})-\varphi(x_i) \le d(x_i,y_{i+1})$.
Hence we obtain
\[ \sum_{i=1}^l d^2(x_i,y_i)
 =\sum_{i=1}^l \{ \varphi(y_i)-\varphi(x_i) \}^2
 \le \sum_{i=1}^l \{ \varphi(y_{i+1})-\varphi(x_i) \}^2
 \le \sum_{i=1}^l d^2(x_i,y_{i+1}), \]
where $y_{l+1}:=y_1$.
This completes the proof.
$\qedd$
\end{proof}

We remark that the assumption $\sup_{(x,y) \in \Xi}\varphi(x) \le \inf_{(x,y) \in \Xi}\varphi(y)$
is unnecessary if $F$ is reversible.
The next proposition, an analogue of \cite[Proposition~4.5]{Ca},
is an interesting application of a basic fact in optimal transport theory.
For $x \in M$ and $r>0$, let
\[ B^+(x,r):=\{y \in M \,|\, d(x,y)<r\}, \qquad B^-(x,r):=\{y \in M \,|\, d(y,x)<r\} \]
be the \emph{forward} and \emph{backward open balls} with center $x$ and radius $r$.

\begin{proposition}\label{pr:Bnull}
We have $\fm(\bm{B}_{\varphi})=0$.
\end{proposition}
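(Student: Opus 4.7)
The plan is to prove $\fm(\bm{B}_{\varphi}^+)=0$, from which $\fm(\bm{B}_{\varphi}^-)=0$ follows by symmetry: swapping $(F,\varphi)$ for the reverse data $(\rev{F},-\varphi)$ interchanges starting and terminal points of transport rays (since $\rev{d}(x,y)=d(y,x)$) while preserving $\fm$ and the $1$-Lipschitz condition. So the full claim reduces to the ``forward'' case, which I would attack by contradiction, assuming $\fm(\bm{B}_{\varphi}^+)>0$.

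First, using the $\sigma$-compactness of $\bm{B}_{\varphi}^+$ from Lemma~\ref{lm:DTB} together with inner regularity of $\fm$, I would pass to a compact subset $K\subset\bm{B}_{\varphi}^+$ of positive $\fm$-measure on which the branching is quantitatively uniform. Concretely, after a countable decomposition I can arrange constants $\delta>0$ and $r\in(0,\delta)$ such that every $x\in K$ admits two outgoing transport rays of length at least $\delta$ whose initial tangent vectors lie in two disjoint closed caps of $U_xM$, while $\sup_K\varphi-\inf_K\varphi<r$. A measurable selection theorem (Kuratowski--Ryll-Nardzewski) applied to the closed correspondences cut out from $\Gamma_{\varphi}$ then yields measurable maps $T_1,T_2:K\lra M$ with $(x,T_i(x))\in\Gamma_{\varphi}$, $d(x,T_i(x))=r$, and $T_1(x)\neq T_2(x)$ for every $x\in K$ (with $r$ small enough that distinct geodesics from $x$ have not reconnected).

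Next, I would set $\Xi:=\{(x,T_i(x)):x\in K,\,i=1,2\}\subset\Gamma_{\varphi}$ and verify the hypotheses of Lemma~\ref{lm:d^2}. Since $\varphi(T_i(x))=\varphi(x)+r$ along any transport ray, the oscillation bound on $K$ gives $\sup_{(x,y)\in\Xi}\varphi(x)\le\inf_{(x,y)\in\Xi}\varphi(y)$, and \eqref{eq:d^2} reduces to the tautology $\{\varphi(x_2)-\varphi(x_1)\}^2\ge 0$. Lemma~\ref{lm:d^2} then yields that $\Xi$ is $d^2$-cyclically monotone. Taking $\mu_0:=\fm(K)^{-1}\fm|_K$ and
\[ \pi:=\tfrac12(\id_M\times T_1)_{\sharp}\mu_0+\tfrac12(\id_M\times T_2)_{\sharp}\mu_0, \]
which is supported on $\Xi$, Theorem~\ref{th:cymo} tells us $\pi$ is a $d^2$-optimal coupling between its marginals. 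But $\mu_0\ll\fm$, so by the Brenier--McCann theorem the $d^2$-optimal coupling out of $\mu_0$ is unique and induced by a single transport map. This is incompatible with $\pi$ placing equal mass on the distinct images $T_1(x),T_2(x)$ for every $x\in K$, giving the desired contradiction.

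The hard part is the measurable selection step: producing two measurable maps $T_1,T_2$ whose images under the $\Gamma_{\varphi}$-correspondence are simultaneously of fixed length and pointwise distinct. Handling this cleanly requires both the $\sigma$-compact decomposition of $\bm{B}_{\varphi}^+$ and the compactness of the unit sphere bundle $UM$ (to obtain a uniform angular separation of the two initial directions on a measure-positive piece $K$), together with the $\cC^{\infty}$-dependence of geodesics on their initial velocity. Once $T_1,T_2$ are in hand, the remaining argument is a direct combination of Lemma~\ref{lm:d^2}, Theorem~\ref{th:cymo}, and the Brenier--McCann theorem.
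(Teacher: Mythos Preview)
Your proposal is correct and follows essentially the same approach as the paper: reduce to $\bm{B}_{\varphi}^+$ via the reverse structure, select two measurable transport maps at a fixed distance, invoke Lemma~\ref{lm:d^2} and Theorem~\ref{th:cymo} to certify $d^2$-optimality of the two-valued coupling, and contradict the Brenier--McCann theorem. The only cosmetic difference is in the selection step---the paper separates the two images by requiring $\min\{d(T^1(x),T^2(x)),d(T^2(x),T^1(x))\}\ge\ve r$ at the endpoint level (and then applies Theorem~\ref{th:select} twice, the second time excluding a neighborhood of $T^1(x)$), which avoids the local trivialization of $UM$ that your angular-cap argument implicitly needs.
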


\begin{proof}
Recalling the decomposition $\bm{B}_{\varphi}=\bm{B}_{\varphi}^+ \sqcup \bm{B}_{\varphi}^-$
in \eqref{eq:B+}, we suppose in contradiction that $\fm(\bm{B}_{\varphi}^+)>0$.
Then the case of $\fm(\bm{B}_{\varphi}^-)>0$ is covered as well by considering
the function $-\varphi$ which is $1$-Lipschitz with respect to the reverse Finsler structure $\rev{F}$.

By the definition of $\bm{B}_{\varphi}^+$, for each $x \in \bm{B}_{\varphi}^+$,
there are (at least) two distinct non-degenerate transport rays emanating from $x$.
Let us take $r \gg \ve>0$ for which the set $A(r,\ve) \subset \bm{B}_{\varphi}^+$
consisting of $x$ such that
there are transport rays $\gamma_x^1,\gamma_x^2$ emanating from $x$ with
\[ \Dom(\eta_x^1) \cap \Dom(\eta_x^2) \supset [0,2r], \qquad
 \min\big\{ d\big( \eta_x^1(2r),\eta_x^2(2r) \big),d\big( \eta_x^2(2r),\eta_x^1(2r) \big) \big\} \ge \ve r \]
has a positive measure,
where we set $\eta_x^k(t):=\gamma_x^k(\varphi(x)+t)$ for $k=1,2$ for brevity.
Notice that $A(r,\ve)$ is a closed set in $\bm{B}_{\varphi}^+$.
Thanks to the selection theorem stated below (Theorem~\ref{th:select}),
one can choose $\gamma_x^k$ in such a way that $T^k(x):=\eta_x^k(2r)$
is a Borel map on $A(r,\ve)$ for $k=1,2$.
Precisely, we consider the map $\Upsilon:A(r,\ve) \lra 2^M$
sending $x$ to the nonempty set $\{\gamma(\varphi(x)+2r)\}_{\gamma}$,
where $\gamma$ runs over all transport rays emanating from $x$ whose domains include $[0,2r]$.
For any open set $U \subset M$,
\[ \{x \in A(r,\ve) \,|\, \Upsilon(x) \cap U \neq \emptyset\}
 =p_1\big( \{(x,y) \in \Gamma_{\varphi} \,|\, x \in A(r,\ve),\, y \in U,\, d(x,y)=2r\} \big) \]
is a Borel set.
Thus we can select a Borel map $T^1:A(r,\ve) \lra M$ such that
$T^1(x) \in \Upsilon(x)$ for all $x \in A(r,\ve)$.
One can similarly obtain $T^2$ from
\[ \Upsilon'(x):=\Upsilon(x) \setminus \big( B^+(T^1(x),\ve r) \cup B^-(T^1(x),\ve r) \big)
 \neq \emptyset, \]
since
\begin{align*}
&\{x \in A(r,\ve) \,|\, \Upsilon'(x) \cap U \neq \emptyset\} \\
&=p_1\bigg( \bigg\{(x,y) \in \Gamma_{\varphi} \,\bigg|
 \begin{array}{l}
 x \in A(r,\ve),\, y \in U,\, d(x,y)=2r, \smallskip\\
 \min\big\{ d(T^1(x),y),d(y,T^1(x)) \big\} \ge \ve r
 \end{array} \bigg\} \bigg)
\end{align*}
is Borel.

Now we fix $x_0 \in A(r,\ve)$ such that
\[ A_{x_0}(r,\ve):=A(r,\ve) \cap B^+(x_0,r) \cap B^-(x_0,r) \]
has a positive measure.
Since $\varphi$ is $1$-Lipschitz and
$\varphi(T^1(x))=\varphi(T^2(x))=\varphi(x)+2r$ for all $x \in A_{x_0}(r,\ve)$, we find
\[ \sup_{x \in A_{x_0}(r,\ve)} \varphi(x) \le \varphi(x_0)+r
 \le \inf_{x \in A_{x_0}(r,\ve)} \varphi(x)+2r
 =\inf_{x \in A_{x_0}(r,\ve)} \varphi\big( T^k(x) \big) \]
for $k=1,2$.
Hence the set
\[ \Xi:=\big\{ \big(x,T^k(x) \big) \,\big|\, x \in A_{x_0}(r,\ve),\, k=1,2 \big\} \]
is $d^2$-cyclically monotone by Lemma~\ref{lm:d^2}.
It follows from Theorem~\ref{th:cymo} that the coupling
\[ \pi:=\frac{1}{2}(\id_M \times T^1)_{\sharp}
 \left( \frac{\fm|_{A_{x_0}(r,\ve)}}{\fm(A_{x_0}(r,\ve))} \right)
 +\frac{1}{2}(\id_M \times T^2)_{\sharp}
 \left( \frac{\fm|_{A_{x_0}(r,\ve)}}{\fm(A_{x_0}(r,\ve))} \right) \]
is $d^2$-optimal.
This is, however, a contradiction since $\Xi$ cannot be represented
as the graph of any map (recall the Brenier--McCann theorem in \S \ref{ssc:OT}).
Therefore we conclude $\fm(\bm{B}_{\varphi}^+)=0$.
$\qedd$
\end{proof}

We have used in the above proof the following special case
of the classical \emph{selection theorem} (see \cite[Theorem in p.~398]{KR}).

\begin{theorem}\label{th:select}
Let $X,Y$ be metric spaces and suppose that $Y$ is complete and separable.
If a map $\Upsilon:X \lra 2^Y$ satisfies that
\[ \{x \in X \,|\, \Upsilon(x) \cap U \neq \emptyset\}\ \text{is Borel for any open set}\ U \subset Y, \]
then there exists a Borel map $T:X \lra Y$ such that $T(x) \in \Upsilon(x)$ for any $x \in X$.
\end{theorem}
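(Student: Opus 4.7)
The statement is a mild version of the classical Kuratowski--Ryll-Nardzewski measurable selection theorem, and I would prove it exactly along those lines. Implicit in the way the theorem is being used (and needed in the proof) is that $\Upsilon(x)$ is a nonempty closed subset of $Y$ for every $x$; under this standing hypothesis, the plan is to construct $T$ as a uniform limit of Borel maps $T_n \colon X \lra Y$ whose values lie successively closer to $\Upsilon(x)$, using a countable dense family in $Y$.

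Fix a countable dense set $\{y_k\}_{k \in \N} \subset Y$ and for each $n,k \in \N$ set $B_{n,k} := B(y_k,2^{-n})$. By the hypothesis, each set $E_{n,k} := \{x \in X \mid \Upsilon(x) \cap B_{n,k} \neq \emptyset\}$ is Borel. Define Borel maps $\sigma_n \colon X \lra \N$ inductively as follows. For $n=0$, let $\sigma_0(x)$ be the least $k$ with $x \in E_{0,k}$; such $k$ exists because $\{B_{0,k}\}_k$ covers $Y$, and $\sigma_0$ is Borel since $\{\sigma_0 = k\} = E_{0,k} \setminus \bigcup_{j<k} E_{0,j}$. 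For $n \ge 1$, let $\sigma_n(x)$ be the least $k$ such that
\[ \Upsilon(x) \cap B_{n,k} \cap B\bigl(y_{\sigma_{n-1}(x)},\,2^{-(n-1)}\bigr) \neq \emptyset; \]
such $k$ exists because the previous step has placed a point of $\Upsilon(x)$ within distance $2^{-(n-1)}$ of $y_{\sigma_{n-1}(x)}$, and the $B_{n,k}$ still cover $Y$. Put $T_n(x) := y_{\sigma_n(x)}$.

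By construction $d(T_n(x), T_{n-1}(x)) < 2^{-(n-1)} + 2^{-n}$, so $\{T_n(x)\}$ is Cauchy uniformly in $x$; completeness of $Y$ yields a pointwise limit $T(x)$, and a uniform limit of Borel maps is Borel. Because $T_n(x)$ lies at distance less than $2^{-n}$ from some point of $\Upsilon(x)$, the limit $T(x)$ lies in $\overline{\Upsilon(x)} = \Upsilon(x)$, as required.

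The one step that must be checked with some care is the Borel measurability of each $\sigma_n$. For fixed $j,k$ the intersection $B_{n,k} \cap B(y_j, 2^{-(n-1)})$ is open, so by hypothesis the set $F_{n,k,j} := \{x \mid \Upsilon(x) \cap B_{n,k} \cap B(y_j, 2^{-(n-1)}) \neq \emptyset\}$ is Borel; and then
\[ \{\sigma_n = k\} = \bigsqcup_{j \in \N} \Bigl( \{\sigma_{n-1}=j\} \cap F_{n,k,j} \setminus \bigcup_{k'<k} F_{n,k',j} \Bigr) \]
is a countable Boolean combination of Borel sets, hence Borel. This is the only real bookkeeping point; everything else is elementary, and the closedness of $\Upsilon(x)$ is what finally guarantees $T(x) \in \Upsilon(x)$ rather than merely $T(x) \in \overline{\Upsilon(x)}$.
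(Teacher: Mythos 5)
Your proof is correct, and it is precisely the standard successive-approximation argument for the Kuratowski--Ryll-Nardzewski selection theorem; the paper itself does not prove this statement at all but simply cites [KR], so your write-up supplies the argument the paper leaves to the reference. You are also right to flag that the hypothesis that each $\Upsilon(x)$ is nonempty and closed is implicit but necessary: without nonemptiness no selection exists, and without closedness the construction only yields $T(x)\in\overline{\Upsilon(x)}$ (a point worth keeping in mind when the theorem is invoked in the paper, e.g.\ for $\Upsilon(x)=\Image(\gamma)\cap\bm{T}_{\varphi}$, where one in fact selects from the closure). The bookkeeping steps you single out --- the Borel measurability of each $\sigma_n$ via the sets $F_{n,k,j}$ and the uniform Cauchy estimate $d(T_n(x),T_{n-1}(x))<2^{-(n-1)}+2^{-n}$ --- are exactly the right ones and are handled correctly.
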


\subsection{Disintegration}\label{ssc:disint}

Denote the set of non-degenerate transport rays
$\gamma:\Dom(\gamma) \lra \R$ by $\bm{R}_{\varphi}$.
Observe that
\[ \bigcup_{\gamma  \in \bm{R}_{\varphi}} \Image(\gamma)
 =\bm{T}_{\varphi} \sqcup \bm{B}_{\varphi}. \]
We shall give another interpretation of $\bm{R}_{\varphi}$ as a quotient of $\bm{T}_{\varphi}$.
This will lead us to a disintegration of $\fm|_{\bm{T}_{\varphi}}$ with respect to $\bm{R}_{\varphi}$.

\begin{lemma}\label{lm:T_phi}
The relation $\sim$ on $\bm{T}_{\varphi}$ defined by
\[ x \sim y\quad \text{if}\ (x,y) \in \Gamma_{\varphi}\ \text{or}\ (y,x) \in \Gamma_{\varphi} \]
is an equivalence relation.
Moreover, the map $\Theta:\bm{T}_{\varphi}/{\sim} \lra \bm{R}_{\varphi}$
sending $[\gamma(t)]$ to $\gamma$ for each $\gamma \in \bm{R}_{\varphi}$
with $\gamma(t) \in \bm{T}_{\varphi}$ is well-defined and bijective.
\end{lemma}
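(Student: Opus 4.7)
My plan is as follows. Reflexivity holds because $(x,x)\in\Gamma_\varphi$ for every $x$ (both sides equal $0$), and symmetry is built into the definition of $\sim$. The two substantive tasks are transitivity and the three properties of $\Theta$ (well-definedness, injectivity, surjectivity).

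For transitivity, suppose $x,y,z\in\bm{T}_\varphi$ with $x\sim y$ and $y\sim z$. I will split into four cases according to the orientations of the two $\Gamma_\varphi$-relations. In the two \emph{mixed} cases $\{(x,y),(y,z)\}\subset\Gamma_\varphi$ or $\{(z,y),(y,x)\}\subset\Gamma_\varphi$, Lemma~\ref{lm:conca} applied to the concatenation of minimal geodesics at $y$ immediately produces a minimal geodesic from $x$ to $z$ (respectively from $z$ to $x$) along which $\varphi$ increases at unit speed; in particular $(x,z)$ or $(z,x)$ lies in $\Gamma_\varphi$, so $x\sim z$. In the two \emph{co-oriented} cases, say $\{(x,y),(z,y)\}\subset\Gamma_\varphi$, both minimal geodesics terminate at $y$. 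Here the key observation is that any minimal geodesic witnessing a $\Gamma_\varphi$-relation can be extended maximally (using forward/backward completeness) to a transport ray in the sense of Definition~\ref{df:ray}, and the extended ray still contains $y$. Since $y\in\bm{T}_\varphi$ admits exactly one non-degenerate transport ray, the two extensions must coincide; hence $x$ and $z$ lie on a common transport ray, and Definition~\ref{df:ray}(1) gives $x\sim z$. The remaining co-oriented case is symmetric.

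For $\Theta$, I will first show well-definedness by establishing that every member of $[x]$ lies on the unique transport ray $\gamma$ through $x$: if $x'\sim x$, the same extension argument places $x'$ on a transport ray through $x$, which must coincide with $\gamma$ by the uniqueness of the ray through $x\in\bm{T}_\varphi$. Hence $\Theta([x])=\gamma$ is independent of the representative. Injectivity is then transparent: if $\Theta([x])=\Theta([y])=\gamma$, then both $x$ and $y$ lie on $\Image(\gamma)$, and Definition~\ref{df:ray}(1) directly yields $(x,y)\in\Gamma_\varphi$ or $(y,x)\in\Gamma_\varphi$, so $[x]=[y]$. For surjectivity, given $\gamma\in\bm{R}_\varphi$ with $\Dom(\gamma)=[a,b]$, $a<b$, I will pick any interior parameter $t\in(a,b)$; then $\gamma(t)\in M_\varphi$ by taking $w:=\gamma(a)$ and $y:=\gamma(b)$ in \eqref{eq:M_phi}, and Lemma~\ref{lm:diff} together with the uniqueness it provides shows $\gamma(t)\in\bm{T}_\varphi$. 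The unique transport ray through $\gamma(t)$ must then be $\gamma$ itself (by maximality in Definition~\ref{df:ray}(2)), so $\Theta([\gamma(t)])=\gamma$.

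The main delicate point is the extension-to-a-ray step used in both transitivity and well-definedness: starting from an arbitrary minimal geodesic realising a $\Gamma_\varphi$-relation, I must enlarge its domain to a maximal closed interval on which property (1) of Definition~\ref{df:ray} persists. This uses forward/backward completeness to keep extending and a straightforward Zorn/maximality argument; the fact that $\Gamma_\varphi$ is closed (hence property (1) passes to limits of the parameter) ensures that the supremum of admissible extensions is itself a transport ray. Once this is in hand, the uniqueness of the transport ray through a point of $\bm{T}_\varphi$ does all of the remaining bookkeeping for both transitivity and the analysis of $\Theta$.
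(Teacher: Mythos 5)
Your proposal is correct and rests on the same key mechanism as the paper's proof: a minimal geodesic witnessing a $\Gamma_{\varphi}$-relation extends to a transport ray, and uniqueness of the non-degenerate transport ray through a point of $\bm{T}_{\varphi}$ forces all such rays through equivalent points to coincide. The paper packages this as the single observation that the class $[x]$ equals $\Image(\gamma_x)\cap\bm{T}_{\varphi}$ rather than running your four-case analysis, but the content is the same (your explicit treatment of surjectivity via $M_{\varphi}\subset\bm{T}_{\varphi}$ is a point the paper leaves implicit).
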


\begin{proof}
It is obvious that the relation $\sim$ is symmetric and $x \sim x$ for all $x \in \bm{T}_{\varphi}$.

Given $x \in \bm{T}_{\varphi}$, there is a unique transport ray
$\gamma:\Dom(\gamma) \lra M$ passing through $x$.
Clearly every $y \in \Image(\gamma) \cap \bm{T}_{\varphi}$ enjoys $x \sim y$.
Conversely, if $y \in \bm{T}_{\varphi}$ satisfies $x \sim y$ with $(x,y) \in \Gamma_{\varphi}$,
then any minimal geodesic from $x$ to $y$ needs to be a part of $\gamma$
by the definition of $\bm{T}_{\varphi}$
(thus, in particular, a minimal geodesic from $x$ to $y$ is unique).
In the other case of $(y,x) \in \Gamma_{\varphi}$,
we similarly see that a unique minimal geodesic from $y$ to $x$ is a part of $\gamma$.
This shows that $\sim$ is transitive and $\Theta$ is bijective.
$\qedd$
\end{proof}

We equip $\bm{R}_{\varphi}$ with the quotient topology induced from
the identification with $\bm{T}_{\varphi}/{\sim}$ via $\Theta$.
Then $\bm{R}_{\varphi}$ is $\sigma$-compact
since $M_{\varphi}$ is $\sigma$-compact (Lemma~\ref{lm:DTB})
and $p(M_{\varphi})=\bm{R}_{\varphi}$,
where $p:\bm{T}_{\varphi} \lra \bm{R}_{\varphi}$ is the projection.
We remark that $\sim$ is not an equivalence relation of $\bm{T}_{\varphi} \sqcup \bm{B}_{\varphi}$.
Consider the example $\varphi(x)=|x|$ in Example~\ref{ex:V}
and observe $-1 \sim 0$, $0 \sim 1$, but $-1 \not\sim 1$.

Following \cite{BC} (see also \cite{Ca,CM}),
we shall disintegrate $\fm|_{\bm{T}_{\varphi}}$ along $\bm{T}_{\varphi}/{\sim}$,
via a Borel map $\sigma:\bm{T}_{\varphi} \lra \bm{T}_{\varphi}$ satisfying
\begin{equation}\label{eq:sigma}
x \sim \sigma(x)\ \text{for all}\ x \in \bm{T}_{\varphi},
 \qquad \text{and}\ \sigma(x)=\sigma(y)\ \text{if}\ x \sim y.
\end{equation}
Such a map $\sigma$ is given again by the selection theorem as follows.

\begin{lemma}\label{lm:sigma}
There exists a Borel map $\sigma:\bm{T}_{\varphi} \lra \bm{T}_{\varphi}$ satisfying \eqref{eq:sigma}.
\end{lemma}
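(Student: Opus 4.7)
The plan is to construct $\sigma$ explicitly by parameterizing each equivalence class through $\varphi$ and picking out a distinguished point, and then to verify Borel measurability via the selection theorem (Theorem \ref{th:select}).

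For $x \in \bm{T}_{\varphi}$, let $\gamma_x:\Dom(\gamma_x) \to M$ denote the unique transport ray through $x$, normalized so that $\varphi \circ \gamma_x$ is the identity. By Lemma \ref{lm:T_phi}, $[x]=\gamma_x(J_x)$ for
\[ J_x:=\{t \in \Dom(\gamma_x) \,|\, \gamma_x(t) \in \bm{T}_{\varphi}\}; \]
since all interior points of any transport ray lie in $M_{\varphi} \subset \bm{T}_{\varphi}$ by Lemma \ref{lm:diff}, $J_x$ contains the interior of $\Dom(\gamma_x)$ and is therefore an interval of positive length. Fix an enumeration $\{q_i\}_{i \in \N}$ of the rationals and partition
\[ \bm{T}_{\varphi}=\bigsqcup_{i \in \N} A_i, \qquad A_i:=\{x \in \bm{T}_{\varphi} \,|\, q_i \in J_x\ \text{and}\ q_j \notin J_x \text{ for } j<i\}. \]
Each $A_i$ is a union of $\sim$-classes, since $J_x=J_y$ whenever $x \sim y$, so setting $\sigma(x):=\gamma_x(q_i)$ for $x \in A_i$ yields a $\sim$-invariant map with values in $\bm{T}_{\varphi}$ satisfying \eqref{eq:sigma} by construction.

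It remains to verify Borel measurability, for which it suffices to show that $E_q:=\{x \in \bm{T}_{\varphi} \,|\, q \in J_x\}$ is Borel and that $x \mapsto \gamma_x(q)$ is Borel on $E_q$, for each rational $q$. I would apply Theorem \ref{th:select} to the set-valued map $\Upsilon_q(x):=\{y \in \bm{T}_{\varphi} \,|\, y \sim x,\ \varphi(y)=q\}$, which is single-valued wherever non-empty by Lemma \ref{lm:conca}, and whose defining conditions are controlled by closedness of $\Gamma_{\varphi}$, continuity of $\varphi$, and the $\sigma$-compactness supplied by Lemma \ref{lm:DTB}. Combining these, the sets $\{x \in \bm{T}_{\varphi} \,|\, \Upsilon_q(x) \cap U \neq \emptyset\}$ for open $U \subset M$ can be expressed as countable unions of projections of compact subsets of $\Gamma_{\varphi}$, hence as $\sigma$-compact Borel sets, yielding a Borel selector.

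The main obstacle is precisely this measurability verification: naively saturating a Borel set under $\sim$ produces an analytic (Suslin) set rather than a Borel one, because one must project out the witness $y$. The resolution is to exploit the $\sigma$-compact decompositions of $M_{\varphi}$ and $\bm{T}_{\varphi} \sqcup \bm{B}_{\varphi}$ from Lemma \ref{lm:DTB} to rewrite each such projection as a countable union of images of compact sets under continuous maps; should any remaining step still fall only in the analytic class, one passes to the $\fm$-completion of the Borel $\sigma$-algebra, which is harmless for the disintegration of $\fm|_{\bm{T}_{\varphi}}$ to be performed immediately afterwards.
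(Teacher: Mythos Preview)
Your argument is correct but proceeds differently from the paper. The paper applies Theorem~\ref{th:select} in a single stroke to the multifunction $\Upsilon(x):=\Image(\gamma_x)\cap\bm{T}_{\varphi}$, i.e.\ the entire equivalence class of $x$; the weak measurability hypothesis is checked by the same projection-of-$\sigma$-compact-sets reasoning you outline, and the constancy $\sigma(x)=\sigma(y)$ for $x\sim y$ then follows because $\Upsilon(x)=\Upsilon(y)$ and the Kuratowski--Ryll-Nardzewski selector depends only on the value set $\Upsilon(x)$, not on the base point $x$. Your construction instead singles out the point at the first rational $\varphi$-level lying in $J_x$, which makes the $\sim$-invariance completely explicit without relying on that (standard but usually unstated) feature of the selection theorem. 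The paper's proof is shorter; yours is more self-contained, and your remark about falling back to the $\fm$-completion if some projection lands only in the analytic class addresses a measurability subtlety that the paper passes over.
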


\begin{proof}
Theorem~\ref{th:select} applies to $\Upsilon:\bm{T}_{\varphi} \lra 2^M$
defined as $\Upsilon(x):=\Image(\gamma) \cap \bm{T}_{\varphi}$,
where $\gamma$ is the unique transport ray containing $x$.
Indeed, for any open set $U \subset M$,
\[ \{x \in \bm{T}_{\varphi} \,|\, \Upsilon(x) \cap U \neq \emptyset\}
 =p_1 \big( \{(x,y) \in \bm{T}_{\varphi}^2 \,|\, y \in U,\, x \sim y\} \big) \]
is a Borel set.
$\qedd$
\end{proof}

We fix $\sigma$ as in Lemma~\ref{lm:sigma} from here on
($\sigma$ will play a role also in \S \ref{sc:prop}).
By identifying $\sigma(\bm{T}_{\varphi}) \subset \bm{T}_{\varphi}$ with $\bm{R}_{\varphi}$
(via $\bm{T}_{\varphi}/{\sim}$),
\[ \fv:=\sigma_{\sharp}(\fm|_{\bm{T}_{\varphi}}) \]
becomes a Borel regular measure on $\bm{R}_{\varphi}$
with $\fv(\bm{R}_{\varphi})=\fm(\bm{T}_{\varphi})$.
Then the \emph{disintegration theorem}
(see \cite[III.70--73]{DM} or \cite[Theorem~5.3.1]{AGSbook}) gives
a $\fv$-almost everywhere uniquely determined family of Borel probability measures
$\{\bm{\mu}_{\gamma}\}_{\gamma \in \bm{R}_{\varphi}} \subset \cP(M)$ such that
$\bm{\mu}_{\gamma}(\Image(\gamma) \cap \bm{T}_{\varphi})=1$ and
\[ \int_{\bm{T}_{\varphi}} f \,d\fm
 =\int_{\bm{R}_{\varphi}} \int_{\Image(\gamma)} f(x) \,\bm{\mu}_{\gamma}(dx) \,\fv(d\gamma) \]
for all Borel integrable functions $f$ on $\bm{T}_{\varphi}$.
We will use the slightly rewritten form:
\begin{equation}\label{eq:disint}
\int_{\bm{T}_{\varphi}} f \,d\fm
 =\int_{\bm{R}_{\varphi}} \int_{\Dom(\gamma)} f\big( \gamma(t) \big) \,\bm{\mu}_{\gamma}(dt) \,\fv(d\gamma),
\end{equation}
by regarding $\bm{\mu}_{\gamma}$ as a measure on the interval $\Dom(\gamma) \subset \R$.
Further qualitative and quantitative properties of the measures $\bm{\mu}_{\gamma}$
will be discussed in \S \ref{ssc:qual} and \S \ref{ssc:quan}, respectively.

\section{Needle decompositions conditioned by mean-zero functions}\label{sc:mean0}

Before explaining further properties of disintegrated measures,
we present in this section an important situation to which we apply our construction.
It also reveals a deep connection between our construction and optimal transport theory.

We fix an $\fm$-integrable function $f:M \lra \R$ such that $\int_M f \,d\fm=0$ (mean-zero) and
\[ \int_M |f(x)| \{d(x_0,x)+d(x,x_0)\} \,\fm(dx) <\infty \]
for some (hence all) $x_0 \in M$.
Consider the following maximization problem:
\begin{equation}\label{eq:MK}
\text{Find a $1$-Lipschitz function $\varphi$ maximizing}\ \int_M f\phi \,d\fm\
\text{among all $\phi \in \Lip_1(M)$}.
\end{equation}
Note that $f\phi$ is $\fm$-integrable since, given $x_0 \in M$,
\begin{align*}
\int_M |f\phi| \,d\fm
&\le \int_M |f(x)| \big( |\phi(x_0)|+\max\{d(x_0,x),d(x,x_0) \} \big) \,\fm(dx) \\
&= |\phi(x_0)| \int_M |f| \,d\fm +\int_M |f(x)| \max\{d(x_0,x),d(x,x_0) \} \,\fm(dx) \\
&< \infty.
\end{align*}
The condition $\int_M f \,d\fm=0$ yields that $\int_M f\phi \,d\fm =\int_M f\{\phi-\phi(x_0)\} \,d\fm$,
hence we can restrict ourselves to $\phi \in \Lip_1(M)$ with $\phi(x_0)=0$
for some fixed point $x_0 \in M$ in \eqref{eq:MK}.
Thus we also find $\sup_{\phi \in \Lip_1(M)} \int_M f\phi \,d\fm<\infty$.

When $f$ is given as the difference of the densities of two absolutely continuous probability measures
($f=\rho_1-\rho_0$ with $\mu_k=\rho_k \fm \in \cP^1(M)$),
\eqref{eq:MK} is nothing but the dual formulation of the Monge--Kantorovich problem
for the cost function $c(x,y):=d(x,y)$ (recall \eqref{eq:KR}).

One can find a solution to \eqref{eq:MK} by a simple application of the Ascoli--Arzel\`a theorem.

\begin{lemma}\label{lm:MK}
There exists a $1$-Lipschitz function $\varphi:M \lra \R$ which is a solution to \eqref{eq:MK}.
\end{lemma}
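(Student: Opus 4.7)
The plan is to apply the Ascoli--Arzel\`a theorem to a maximizing sequence and then pass to the limit inside the integral via dominated convergence.

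First, fix a basepoint $x_0 \in M$. As remarked just before the statement of the lemma, the mean-zero hypothesis on $f$ allows us to replace any $\phi \in \Lip_1(M)$ by $\phi - \phi(x_0)$ without altering $\int_M f\phi\,d\fm$, so we may restrict attention to $1$-Lipschitz functions with $\phi(x_0)=0$. Set $S:=\sup\{\int_M f\phi\,d\fm : \phi \in \Lip_1(M),\ \phi(x_0)=0\}$, which is finite by the same remark, and choose a maximizing sequence $\{\varphi_i\}$ with $\varphi_i(x_0)=0$ and $\int_M f\varphi_i\,d\fm \to S$.

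Next I would establish a uniform pointwise bound and equicontinuity. From \eqref{eq:L-Lip} applied at $x_0$ and $x$,
\[ -d(x,x_0) \le \varphi_i(x)-\varphi_i(x_0) \le d(x_0,x), \]
hence $|\varphi_i(x)| \le \rho(x)$ where $\rho(x):=\max\{d(x_0,x),d(x,x_0)\}$. Applied at an arbitrary pair $x,y$, the same bound gives
\[ |\varphi_i(y)-\varphi_i(x)| \le \max\{d(x,y),d(y,x)\}, \]
so the family is equicontinuous with respect to the symmetrized distance $d_s(x,y):=\max\{d(x,y),d(y,x)\}$, which induces the manifold topology. Forward and backward completeness guarantee that closed forward-and-backward balls centered at $x_0$ are compact, providing an exhaustion $\{K_j\}_{j \in \N}$ of $M$ by compact sets. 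A standard diagonal subsequence extraction via Ascoli--Arzel\`a on each $K_j$ produces a subsequence (still denoted $\varphi_i$) converging locally uniformly to some function $\varphi : M \lra \R$. The inequality \eqref{eq:L-Lip} passes to pointwise limits, so $\varphi \in \Lip_1(M)$, and clearly $|\varphi(x)| \le \rho(x)$.

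Finally I would pass to the limit in $\int_M f\varphi_i\,d\fm$. Since $|f\varphi_i| \le |f|\rho$ pointwise, and $|f|\rho$ is $\fm$-integrable by the standing hypothesis $\int_M |f(x)|\{d(x_0,x)+d(x,x_0)\}\,\fm(dx)<\infty$, dominated convergence yields $\int_M f\varphi_i\,d\fm \to \int_M f\varphi\,d\fm$, forcing $\int_M f\varphi\,d\fm = S$. The only mildly delicate point is equicontinuity and compactness in the asymmetric setting, but this is handled cleanly by working with the symmetrized distance $d_s$ and invoking forward and backward completeness to exhaust $M$ by compact sets; the integrability hypothesis on $f$ is tailored precisely to supply the dominating function.
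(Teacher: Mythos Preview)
Your argument is correct and matches the paper's proof essentially line for line: normalize a maximizing sequence at a basepoint, extract a locally uniformly convergent subsequence via Ascoli--Arzel\`a and a diagonal argument, and pass to the limit by dominated convergence using the integrability hypothesis on $|f(x)|\{d(x_0,x)+d(x,x_0)\}$. Your additional care with the symmetrized distance $d_s$ to handle equicontinuity and the compact exhaustion in the non-reversible setting is more explicit than the paper's treatment but not a different approach.
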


\begin{proof}
Take a sequence $\{\phi_i\}_{i \in \N} \subset \Lip_1(M)$ satisfying
\[ \lim_{i \to \infty} \int_M f\phi_i \,d\fm=\sup_{\phi \in \Lip_1(M)}\int_M f\phi \,d\fm \]
and $\phi_i(x_0)=0$ for some $x_0 \in M$ and all $i \in \N$.
By the Ascoli--Arzel\`a theorem and the diagonal argument,
a subsequence of $\{\phi_i\}_{i \in \N}$ converges to some $\varphi \in \Lip_1(M)$
uniformly on each compact set.
Thanks to $-d(x,x_0) \le \phi_i(x) \le d(x_0,x)$ for all $i \in \N$ and $x \in M$,
one can use the dominated convergence theorem to see
\[ \int_M f\phi \,d\fm =\lim_{i \to \infty} \int_M f\phi_i \,d\fm=\sup_{\phi \in \Lip_1(M)}\int_M f\phi \,d\fm. \]
$\qedd$
\end{proof}

In the remainder of this section,
we fix a $1$-Lipschitz function $\varphi$ given by Lemma~\ref{lm:MK}.
Along the lines of \cite{EG} (see also \cite[\S 4]{Kl}),
we shall show that $f$ is mean-zero along almost every transport ray associated with $\varphi$.
Given a Borel set $A \subset M$, we define
\[ S(A):=\{x \in M \,|\,
 (x,y) \in \Gamma_{\varphi}\ \text{or}\ (y,x) \in \Gamma_{\varphi}\ \text{for some}\ y \in A\}. \]
Clearly $A \subset S(A)$ and $S(A)$ is a Borel set.
We say that $A$ is a \emph{saturated set} associated with $\varphi$ if $A=S(A)$.
Notice that a singleton $\{x\}$ is a saturated set if $x \in \bm{D}_{\varphi}$.

\begin{lemma}\label{lm:balance}
Take a compact set $Z \subset M$ and $\delta>0$.
\begin{enumerate}[{\rm (i)}]
\item
The function
\[ \varphi_{\delta}(x) :=\inf_{y \in M} \{\varphi(y)+d(y,x)-\delta \cdot \chi_Z(y)\},\qquad x \in M, \]
satisfies $0 \le \varphi(x)-\varphi_{\delta}(x) \le \delta$ for all $x \in M$,
where $\chi_Z$ is the characteristic function of $Z$.

\item
The limit
\[ \Phi(x):=\lim_{\delta \downarrow 0} \frac{\varphi(x)-\varphi_{\delta}(x)}{\delta} \]
exists in $[0,1]$ at all $x \in M$, and we have
$\Phi \equiv 1$ on $Z$ and $\Phi \equiv 0$ on $M \setminus S(Z)$.
\end{enumerate}
\end{lemma}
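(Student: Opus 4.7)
For (i), I plan to use $y=x$ in the defining infimum together with the $1$-Lipschitz property of $\varphi$. Plugging $y=x$ gives $\varphi_\delta(x)\le \varphi(x)-\delta\chi_Z(x)\le \varphi(x)$, which yields the upper bound $\varphi(x)-\varphi_\delta(x)\ge 0$. For the lower bound, the $1$-Lipschitz condition \eqref{eq:L-Lip} yields $\varphi(x)\le\varphi(y)+d(y,x)$ for every $y\in M$, hence $\varphi(y)+d(y,x)-\delta\chi_Z(y)\ge\varphi(x)-\delta$; taking the infimum in $y$ shows $\varphi_\delta(x)\ge\varphi(x)-\delta$.

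For the existence of the limit in (ii), my plan is to exploit the structural fact that $\delta\mapsto\varphi_\delta(x)$ is the infimum of functions which are affine in $\delta$ with slope $-\chi_Z(y)\in\{-1,0\}$, hence concave on $[0,\infty)$. Therefore $h_x(\delta):=\varphi(x)-\varphi_\delta(x)$ is convex in $\delta$ with $h_x(0)=0$, and the standard fact that a convex function vanishing at the origin has a non-decreasing difference quotient on $(0,\infty)$ shows that $\delta\mapsto h_x(\delta)/\delta$ is monotone as $\delta\downarrow 0$. Combined with the bound $0\le h_x(\delta)/\delta\le 1$ coming from (i), this produces a limit $\Phi(x)\in[0,1]$ for every $x\in M$.

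It remains to identify $\Phi$ on the two boundary cases. For $x\in Z$, the choice $y=x$ gives $\varphi_\delta(x)\le\varphi(x)-\delta$; combined with (i) this forces $\varphi(x)-\varphi_\delta(x)=\delta$ and hence $\Phi(x)=1$. For $x\notin S(Z)$, the definition of $S(Z)$ ensures $(y,x)\notin\Gamma_\varphi$ for every $y\in Z$, so the $1$-Lipschitz bound $\varphi(x)-\varphi(y)\le d(y,x)$ is strict for each such $y$; the continuous positive function $y\mapsto\varphi(y)+d(y,x)-\varphi(x)$ attains a positive minimum $m(x)>0$ on the compact set $Z$. For $\delta<m(x)$, every $y\in Z$ satisfies $\varphi(y)+d(y,x)-\delta>\varphi(x)$, while every $y\notin Z$ satisfies $\varphi(y)+d(y,x)\ge\varphi(x)$ by $1$-Lipschitzness; thus $\varphi_\delta(x)\ge\varphi(x)$, which by (i) forces $\varphi_\delta(x)=\varphi(x)$ and so $\Phi(x)=0$.

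The main obstacle I anticipate is the treatment of $x\notin S(Z)$: one must observe that non-membership in $S(Z)$ yields a \emph{strict} inequality in the $1$-Lipschitz bound for \emph{every} $y\in Z$ (membership in $\Gamma_\varphi$ being precisely the equality case), and then use compactness of $Z$ to upgrade these pointwise strict inequalities to a uniform positive gap $m(x)$. The remaining steps are routine manipulations with the infimum together with the concavity/convexity observation in (ii).
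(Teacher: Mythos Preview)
Your proposal is correct and matches the paper's proof essentially line by line: part (i) is identical, the identification of $\Phi$ on $Z$ and on $M\setminus S(Z)$ is the same compactness argument, and your concavity observation for the existence of the limit is just a rephrasing of the paper's direct computation
\[
\frac{\varphi(x)-\varphi_\delta(x)}{\delta}=\sup_{y\in M}\Big\{\frac{\varphi(x)-\varphi(y)-d(y,x)}{\delta}+\chi_Z(y)\Big\},
\]
which is non-increasing as $\delta\downarrow 0$ since each term in the supremum is.
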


\begin{proof}
(i)
On the one hand, choosing $y=x$ in the definition of $\varphi_{\delta}$,
we find
\[ \varphi_{\delta}(x) \le \varphi(x)-\delta \cdot \chi_Z(x) \le \varphi(x). \]
On the other hand, since $\varphi$ is $1$-Lipschitz,
\[ \varphi_{\delta}(x) \ge \inf_{y \in M} \{\varphi(y)+d(y,x)\} -\delta \ge \varphi(x)-\delta. \]

(ii)
We first observe that, since $\varphi(x)-\varphi(y)-d(y,x) \le 0$,
\[ \frac{\varphi(x)-\varphi_{\delta}(x)}{\delta}
 =\sup_{y \in M} \bigg\{ \frac{\varphi(x)-\varphi(y)-d(y,x)}{\delta}+\chi_Z(y) \bigg\} \in [0,1] \]
is non-increasing as $\delta \downarrow 0$.
Hence $\Phi(x)$ is well-defined for all $x \in M$.

If $x \in Z$, then the definition of $\varphi_{\delta}$ and (i) above imply
$\varphi_{\delta}(x) \le \varphi(x)-\delta \le \varphi_{\delta}(x)$.
Therefore $\varphi_{\delta}(x)=\varphi(x)-\delta$ for all $\delta>0$ and $\Phi(x)=1$.

Let $x \in M \setminus S(Z)$.
Then, for any $y \in Z$, it follows from the definition of $S(Z)$ that
$x$ and $y$ are not on the same transport ray.
This implies $\varphi(y)+d(y,x)>\varphi(x)$, and the compactness of $Z$
gives a positive constant $\delta_x>0$ such that
\[ \varphi(y)+d(y,x) \ge \varphi(x)+\delta_x \]
for all $y \in Z$.
Hence $\varphi_{\delta}(x)=\varphi(x)$ for all $\delta \le \delta_x$, and $\Phi(x)=0$.
$\qedd$
\end{proof}

\begin{lemma}\label{lm:mean0}
For any saturated set $A \subset M$ associated with $\varphi$, we have
\[ \int_A f \,d\fm=0. \]
\end{lemma}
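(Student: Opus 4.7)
The strategy is to test the optimality of $\varphi$ in \eqref{eq:MK} against the family of admissible competitors $\varphi_\delta$ furnished by Lemma~\ref{lm:balance}, with $Z$ varying over compact subsets of $A$, then let $\delta \downarrow 0$ and fatten $Z$ up to $A$. First I would observe that for any compact $Z \subset M$ the function
\[ \varphi_\delta(x) = \inf_{y \in M}\{\varphi(y) + d(y,x) - \delta\chi_Z(y)\} \]
is itself $1$-Lipschitz: for each fixed $y$ the triangle inequality gives $d(y,b) - d(y,a) \le d(a,b)$, so every integrand is $1$-Lipschitz in $x$, and pointwise infima preserve $1$-Lipschitz continuity. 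Since $\varphi_\delta \le \varphi$ by Lemma~\ref{lm:balance}(i), optimality of $\varphi$ yields $\int_M f(\varphi - \varphi_\delta)\,d\fm \ge 0$. Dividing by $\delta$ and using the uniform bound $0 \le (\varphi - \varphi_\delta)/\delta \le 1$ together with $f \in L^1(\fm)$, dominated convergence combined with Lemma~\ref{lm:balance}(ii) gives
\[ \int_M f\,\Phi \,d\fm \ge 0. \]

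Next I would specialize to compact $Z \subset A$. By Lemma~\ref{lm:balance}(ii), $\Phi \equiv 1$ on $Z$, $\Phi \equiv 0$ off $S(Z)$, and $0 \le \Phi \le 1$ everywhere. The saturation hypothesis $S(A) = A$ forces $S(Z) \subset S(A) = A$, so the pointwise sandwich $\chi_Z \le \Phi \le \chi_A$ holds. Choose now an increasing sequence of compact sets $Z_n \subset A$ with $\fm(A \setminus Z_n) \to 0$, which exists by the $\sigma$-compactness of $M$ and inner regularity of $\fm$. Writing $\Phi_n$ for the function associated with $Z_n$, the sandwich $\chi_{Z_n} \le \Phi_n \le \chi_A$ forces $\Phi_n \to \chi_A$ $\fm$-almost everywhere. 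A second application of dominated convergence, with the integrable majorant $|f|\chi_A$, yields
\[ \int_A f\,d\fm = \lim_{n\to\infty} \int_M f\,\Phi_n\,d\fm \ge 0. \]

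To finish, I would run the same argument with $A$ replaced by its complement. The complement of a saturated set is saturated: if $x \in M \setminus A$ satisfied $x \sim y$ for some $y \in A$, then $x \in S(A) = A$, a contradiction. Hence the previous step gives $\int_{M\setminus A} f\,d\fm \ge 0$ as well, and together with $\int_M f \,d\fm = 0$ both integrals must vanish.

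The main conceptual obstacle is not a computation but a structural one: the function $\Phi$ is not explicitly controlled on the transition region $S(Z)\setminus Z$, where transport rays enter $Z$ ``from behind.'' The proof circumvents this by using only the two-sided sandwich $\chi_Z \le \Phi \le \chi_{S(Z)}$, which suffices precisely because the saturation hypothesis confines $S(Z)$ to $A$. Verifying that $\varphi_\delta$ is $1$-Lipschitz in the non-reversible setting is routine but worth carrying out carefully because of the asymmetry of $d$.
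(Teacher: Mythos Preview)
Your argument is correct and follows the paper's approach: test the optimality of $\varphi$ against the competitors $\varphi_\delta$ from Lemma~\ref{lm:balance}, pass to the limit via dominated convergence, and use inner regularity to pass from compact $Z$ to $A$. Two small points are worth flagging. First, the choice of compact $Z_n \subset A$ with $\fm(A\setminus Z_n)\to 0$ is not available when $\fm(A)=\infty$; what you actually need, and what suffices for the sandwich $\chi_{Z_n}\le\Phi_n\le\chi_A$ to force $\Phi_n\to\chi_A$ almost everywhere, is merely $\fm\big(A\setminus\bigcup_n Z_n\big)=0$. The paper sidesteps this by working instead with a single compact $Z$ satisfying $\int_{A\setminus Z}|f|\,d\fm<\ve$, which is available directly from $f\in L^1(\fm)$. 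Second, your complement trick for the reverse inequality is a genuine, if minor, variation: the paper instead applies the already-proved inequality to $-f$ and $-\varphi$, the latter being $1$-Lipschitz for the reverse structure $\rev{F}$. Your route avoids invoking $\rev{F}$ at the modest cost of checking that complements of saturated sets are saturated, which is immediate from the symmetry of the relation defining $S(\cdot)$.
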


\begin{proof}
We first show that $\int_A f \,d\fm \ge 0$.
For arbitrary $\ve>0$, since $\fm$ is Borel regular,
there is a compact set $Z \subset A$ such that $\int_{A \setminus Z} |f| \,d\fm<\ve$.
Then $S(Z) \subset S(A)=A$.

Take $\delta>0$ and let $\varphi_{\delta}$ be as in Lemma~\ref{lm:balance}.
Note that the function $x \longmapsto d(y,x)$ is $1$-Lipschitz for every fixed $y \in M$.
This implies that the function
\[ x\ \longmapsto\ \varphi(y)+d(y,x)-\delta \cdot \chi_Z(y) \]
is $1$-Lipschitz for every $y \in M$, and hence $\varphi_{\delta} \in \Lip_1(M)$.
Then by the choice of $\varphi$ (Lemma~\ref{lm:MK}) we have
\[ \int_M f \frac{\varphi-\varphi_{\delta}}{\delta} \,d\fm \ge 0
 \qquad \text{for all}\ \delta>0. \]
Letting $\delta \downarrow 0$, we obtain by the dominated convergence theorem
and Lemma~\ref{lm:balance}(ii) that
\[ 0 \le \int_M f\Phi \,d\fm =\int_{S(Z)} f\Phi \,d\fm
 =\int_{S(Z) \setminus Z} f\Phi \,d\fm +\int_Z f \,d\fm. \]
Combining this with $0 \le \Phi \le 1$, we have
\[ \int_Z f \,d\fm \ge -\int_{S(Z) \setminus Z} f\Phi \,d\fm
 \ge -\int_{S(Z) \setminus Z} |f| \,d\fm >-\ve. \]
Since $\ve>0$ was arbitrary, this yields $\int_A f \,d\fm \ge 0$.

To see $\int_A f \,d\fm \le 0$,
we simply apply the above claim to the functions $-f$ and $-\varphi$,
the latter is $1$-Lipschitz with respect to the reverse Finsler structure $\rev{F}$.
$\qedd$
\end{proof}

\begin{proposition}\label{pr:mean0}
We have
\[ \int_{\Dom(\gamma)} f \big( \gamma(t) \big) \,\bm{\mu}_{\gamma}(dt)=0 \]
for $\fv$-almost every $\gamma \in \bm{R}_{\varphi}$.
Moreover, $f \equiv 0$ $\fm$-almost everywhere on $\bm{D}_{\varphi}$.
\end{proposition}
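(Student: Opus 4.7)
The plan is to leverage Lemma~\ref{lm:mean0} indirectly: rather than seeking saturated sets that isolate individual transport rays (which typically fail to exist because of branching in $\bm{B}_\varphi$), I will revisit the proof of Lemma~\ref{lm:mean0} to extract an intermediate estimate valid on \emph{every} compact set, not only on saturated ones.

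For the second assertion, observe that any Borel subset $E \subset \bm{D}_\varphi$ is already saturated: if $x \in E$ and some $y \neq x$ satisfies $(x,y) \in \Gamma_{\varphi}$ or $(y,x) \in \Gamma_{\varphi}$, then a connecting minimal geodesic extends (by Definition~\ref{df:ray}) to a non-degenerate transport ray containing $x$, contradicting $x \in \bm{D}_{\varphi}$. Lemma~\ref{lm:mean0} then yields $\int_E f \,d\fm = 0$ for every such $E$, whence $f \equiv 0$ $\fm$-almost everywhere on $\bm{D}_\varphi$.

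For the first assertion, by the disintegration~\eqref{eq:disint} it suffices to show $\int_A f \,d\fm = 0$ for every set of the form $A = p^{-1}(\mathcal{A})$, with $\mathcal{A} \subset \bm{R}_\varphi$ Borel and $p:\bm{T}_\varphi \to \bm{R}_\varphi$ the quotient projection. The key observation is that, even though $A$ is typically not saturated, for every compact $Z \subset A$ one has
\[ S(Z) \subset A \cup \bm{B}_\varphi. \]
Indeed, each $z \in Z \subset \bm{T}_\varphi$ lies on a unique transport ray $\gamma_z$, and any $y \neq z$ related to $z$ via $\Gamma_\varphi$ must lie on $\gamma_z$ (else a second transport ray would pass through $z$); since $\gamma_z = p(z) \in \mathcal{A}$, this gives $y \in \Image(\gamma_z) \subset A \cup \bm{B}_\varphi$.

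Next I would revisit the proof of Lemma~\ref{lm:mean0} and note that its central inequality $\int_Z f \,d\fm \geq -\int_{S(Z) \setminus Z} |f| \,d\fm$ holds for every compact $Z \subset M$, with no saturation hypothesis on any ambient set (that hypothesis was used only in the subsequent estimation of $S(Z) \setminus Z$). Applying the same argument to $(-\varphi, -f)$ (which is admissible since $-\varphi$ is $1$-Lipschitz for $\rev{F}$ and maximizes $\int(-f)\psi\,d\fm$ over $\psi \in \Lip_1(M,\rev{F})$), and noting that the saturation operator is symmetric in $\Gamma_\varphi$ and therefore unchanged, one obtains
\[ \left| \int_Z f \,d\fm \right| \leq \int_{S(Z) \setminus Z} |f| \,d\fm \leq \int_{A \setminus Z} |f| \,d\fm, \]
where the last step combines the inclusion above with $\fm(\bm{B}_\varphi) = 0$ (Proposition~\ref{pr:Bnull}). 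Since $|f|\,\fm$ is a finite Borel measure on the Polish space $M$, it is inner regular, so compacts $Z_n \subset A$ can be chosen with $\int_{A \setminus Z_n} |f| \,d\fm \to 0$; passing to the limit gives $\int_A f \,d\fm = 0$, hence $\int_{\mathcal{A}} G \,d\fv = 0$ for $G(\gamma) := \int f \,d\bm{\mu}_\gamma$ by~\eqref{eq:disint}. Arbitrariness of $\mathcal{A}$ yields $G \equiv 0$ $\fv$-almost everywhere. The main conceptual obstacle is precisely that the natural ``per-ray'' Borel sets $p^{-1}(\mathcal{A})$ are not themselves saturated; resolving it requires combining (i) the observation that they differ from their saturation only within $\bm{B}_\varphi$, (ii) $\fm(\bm{B}_\varphi) = 0$, and (iii) the fact that the essential estimate in Lemma~\ref{lm:mean0}'s proof is really about compact sets rather than saturated ones.
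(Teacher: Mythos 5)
Your argument is correct and essentially coincides with the paper's: the paper likewise applies Lemma~\ref{lm:mean0} to $p^{-1}(B)$ for Borel $B \subset \bm{R}_{\varphi}$, observing that this set is saturated up to an $\fm$-negligible subset of $\bm{B}_{\varphi}$, and handles the second assertion by noting that Borel subsets of $\bm{D}_{\varphi}$ are saturated. Your re-extraction of the compact-set inequality from the proof of Lemma~\ref{lm:mean0}, combined with $S(Z) \subset A \cup \bm{B}_{\varphi}$ and $\fm(\bm{B}_{\varphi})=0$, is simply a careful justification of the step the paper compresses into that one phrase.
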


\begin{proof}
Recall the disintegration \eqref{eq:disint}:
\[ \int_{\bm{T}_{\varphi}} f \,d\fm
 =\int_{\bm{R}_{\varphi}} \int_{\Dom(\gamma)} f\big( \gamma(t) \big) \,\bm{\mu}_{\gamma}(dt) \,\fv(d\gamma). \]
For any Borel set $B \subset \bm{R}_{\varphi}$,
$p^{-1}(B) \subset \bm{T}_{\varphi}$ is a saturated set
up to an $\fm$-negligible set contained in $\bm{B}_{\varphi}$,
where $p:\bm{T}_{\varphi} \lra \bm{R}_{\varphi}$ is the projection.
Then it follows from Lemma~\ref{lm:mean0} that
\[ \int_B \int_{\Dom(\gamma)} f\big( \gamma(t) \big) \,\bm{\mu}_{\gamma}(dt) \,\fv(d\gamma)
 =\int_{\sigma^{-1}(B)} f \,d\fm=0, \]
and hence $\int_{\Dom(\gamma)} f \circ \gamma \,d\bm{\mu}_{\gamma}=0$
for $\fv$-almost every $\gamma \in \bm{R}_{\varphi}$.

The second assertion also follows from Lemma~\ref{lm:mean0}
since any Borel set $A \subset \bm{D}_{\varphi}$ is a saturated set.
$\qedd$
\end{proof}

\section{Properties of disintegrated measures}\label{sc:prop}

Coming back to a general $1$-Lipschitz function $\varphi:M \lra \R$,
we investigate in this section properties of the disintegrated measures
$\bm{\mu}_{\gamma}$, $\gamma \in \bm{R}_{\varphi}$, given in \S \ref{ssc:disint}.

\subsection{Ray maps}\label{ssc:raymap}

Along \cite[\S 4]{BC}, we introduce the following.

\begin{definition}[Ray maps]\label{df:raymap}
Define the subset $\Dom(\cG) \subset \sigma(\bm{T}_{\varphi}) \times \R$
($\subset \bm{T}_{\varphi} \times \R$) by
\begin{align}
\Dom(\cG):= &\{(x,t) \,|\,
 (x,y) \in \Gamma_{\varphi},\, t=d(x,y)\ \text{for some}\ y \in \bm{T}_{\varphi} \}
 \nonumber\\
&\cup \{(x,t) \,|\,
 (w,x) \in \Gamma_{\varphi},\, t=-d(w,x)\ \text{for some}\ w \in \bm{T}_{\varphi} \},
 \label{eq:DomG}
\end{align}
and the \emph{ray map} $\cG:\Dom(\cG) \lra \bm{T}_{\varphi}$ by
$\cG(x,t):=y$ in the first set of \eqref{eq:DomG}
and $\cG(x,t):=w$ in the second set.
\end{definition}

Recall that,
if $(x,y) \in \Gamma_{\varphi}$ with $x \in \bm{T}_{\varphi}$ and $y \neq x$,
then there is a transport ray $\gamma$ passing through $x$ and $y$,
and $\gamma$ is the unique transport ray containing $x$.
Thus the map $\cG$ is well-defined.
Clearly $\sigma(\bm{T}_{\varphi}) \times \{0\} \subset \Dom(\cG)$
and $\cG$ is a bijective map from $\Dom(\cG)$ to $\bm{T}_{\varphi}$.
Since the graph of $\cG$
\begin{align*}
&\{(x,t,y) \in \sigma(\bm{T}_{\varphi}) \times \R \times \bm{T}_{\varphi} \,|\,
 (x,t) \in \Dom(\cG),\, y=\cG(x,t)\} \\
&= \{(x,t,y) \,|\, (x,y) \in \Gamma_{\varphi},\, t=d(x,y)\}
 \cup \{(x,t,w) \,|\, (w,x) \in \Gamma_{\varphi},\, t=-d(w,x)\}
\end{align*}
is a Borel set, $\cG$ and $\cG^{-1}$ are Borel maps.
Moreover, $\Dom(\cG)$ is convex in the $\R$-direction in the sense that
$(x,s),(x,t) \in \Dom(\cG)$ with $s<t$ implies $\{x\} \times [s,t] \subset \Dom(\cG)$.
We also observe from Lemma~\ref{lm:diff} that
\[ \del_r \cG(x,r) =\nabla\varphi \big( \cG(x,r) \big)
 \qquad \text{for all}\ r \in (s,t). \]
Hence $\cG$ is regarded as the (ascending) \emph{gradient flow}
of $\varphi$ on $\bm{T}_{\varphi}$.

The convexity of $\Dom(\cG)$ in the $\R$-direction readily implies the following property
that will be useful in the sequel
and can be compared with the decompositions into ray clusters in \cite[\S 3.2]{Kl}.

\begin{lemma}\label{lm:Rconv}
There exists a countable family $\{Z_i\}_{i \in \N}$ of compact subsets of $\bm{T}_{\varphi}$
together with
$\{(a_i,b_i)\}_{i \in \N} \subset \R \times \R$ such that $a_i \le 0 \le b_i$ and
\[ \sigma(Z_i) \times [a_i,b_i] \subset \Dom(\cG), \qquad
 (\fv \times \fL^1)\bigg( \Dom(\cG) \setminus \bigcup_{i \in \N}
 \big(\sigma(Z_i) \times [a_i,b_i] \big) \bigg)=0, \]
where $\fL^1$ denotes the $1$-dimensional Lebesgue measure on $\R$.
\end{lemma}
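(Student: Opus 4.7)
The plan is to exhaust $\Dom(\cG)$, up to a $(\fv \times \fL^1)$-null set, by a countable family of rectangles $\sigma(Z_i) \times [a_i, b_i]$ with $Z_i \subset \bm{T}_{\varphi}$ compact and $(a_i, b_i) \in \mathbb{Q}^2$, $a_i \le 0 \le b_i$. The structural input is the $\R$-convexity of $\Dom(\cG)$ recorded just after Definition~\ref{df:raymap}: if $(x, s), (x, t) \in \Dom(\cG)$ with $s < t$, then $\{x\} \times [s, t] \subset \Dom(\cG)$.

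For each pair $(a, b) \in \mathbb{Q}^2$ with $a \le 0 \le b$, I would first introduce
\[ E_{a, b} := \{ x \in \sigma(\bm{T}_{\varphi}) : (x, a) \in \Dom(\cG) \text{ and } (x, b) \in \Dom(\cG) \}, \]
which is Borel because $\Dom(\cG)$ is (via the Borelness of the graph of $\cG$ noted after Definition~\ref{df:raymap}). The $\R$-convexity immediately gives $E_{a, b} \times [a, b] \subset \Dom(\cG)$. Conversely, every $(x, t) \in \Dom(\cG)$ lies in some such rectangle: the fiber $I_x := \{s : (x, s) \in \Dom(\cG)\}$ is a non-degenerate closed interval containing both $0$ and $t$ (non-degenerate because $x \in \sigma(\bm{T}_{\varphi})$ sits on a non-degenerate transport ray), so rationals $a \in I_x \cap (-\infty, \min(t, 0)]$ and $b \in I_x \cap [\max(t, 0), +\infty)$ exist. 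This yields the exact decomposition
\[ \Dom(\cG) = \bigcup_{(a, b) \in \mathbb{Q}^2,\, a \le 0 \le b} E_{a, b} \times [a, b]. \]

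Next I would pass to compact sets via inner regularity. Since $\fm$ is a Radon measure on the manifold $M$ and $\sigma^{-1}(E_{a, b}) \subset \bm{T}_{\varphi}$ is Borel (as $\sigma$ is Borel), for each $(a, b)$ one can choose compact sets $K_{a, b, j} \subset \sigma^{-1}(E_{a, b})$ with $\fm(\sigma^{-1}(E_{a, b}) \setminus \bigcup_j K_{a, b, j}) = 0$. Setting $Z_{a, b, j} := K_{a, b, j}$, one has $\sigma(Z_{a, b, j}) \subset E_{a, b}$, whence $\sigma(Z_{a, b, j}) \times [a, b] \subset \Dom(\cG)$. Since $\sigma^{-1}(\sigma(Z_{a, b, j})) \supset Z_{a, b, j}$ and $\fv = \sigma_{\sharp}(\fm|_{\bm{T}_{\varphi}})$, we get
\[ \fv\bigg( E_{a, b} \setminus \bigcup_j \sigma(Z_{a, b, j}) \bigg) \le \fm\bigg( \sigma^{-1}(E_{a, b}) \setminus \bigcup_j K_{a, b, j} \bigg) = 0. \]
Enumerating the collection $\{(Z_{a, b, j}, a, b)\}$ as $\{(Z_i, a_i, b_i)\}_{i \in \N}$ and applying Fubini to
\[ \Dom(\cG) \setminus \bigcup_i \sigma(Z_i) \times [a_i, b_i] \subset \bigcup_{(a, b)} \bigg( E_{a, b} \setminus \bigcup_j \sigma(Z_{a, b, j}) \bigg) \times [a, b], \]
a countable union of products with $\fv$-null first factor, completes the argument.

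The main obstacle is really the measurability bookkeeping rather than any geometric content: one must ensure that $\Dom(\cG)$ and each $\sigma^{-1}(E_{a, b})$ are Borel, and that the Radon-ness of $\fm$ on $M$ yields the compact exhaustions $K_{a, b, j}$ inside the $\sigma$-compact Borel set $\bm{T}_{\varphi}$. The actual geometric content reduces to the single observation, following from Lemma~\ref{lm:conca} and the definition of transport rays, that the fibers of $\Dom(\cG)$ are non-degenerate intervals containing the origin.
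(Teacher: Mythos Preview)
Your argument follows essentially the same route as the paper's proof: define Borel slabs $E_{a,b}:=\{x\in\sigma(\bm{T}_{\varphi}):(x,a),(x,b)\in\Dom(\cG)\}$ indexed by a countable dense set of pairs (the paper uses dense sequences $\{a_i\}\subset(-\infty,0]$ and $\{b_j\}\subset[0,\infty)$; you use $\mathbb{Q}^2$), observe via $\R$-convexity that $E_{a,b}\times[a,b]\subset\Dom(\cG)$, and then pass to compact $Z\subset\sigma^{-1}(E_{a,b})$ by inner regularity of $\fm$.

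There is one minor but genuine slip. You assert that each fiber $I_x$ is a \emph{closed} interval and hence that $\Dom(\cG)=\bigcup_{(a,b)}E_{a,b}\times[a,b]$ exactly. Neither claim is justified: the endpoints of the ray $\gamma_x$ may lie in $\bm{B}_{\varphi}$ rather than $\bm{T}_{\varphi}$, in which case the corresponding $s$-values are excluded from $I_x$ by the very definition of $\Dom(\cG)$; and even when an endpoint $t=\sup I_x$ does belong to $I_x$, if $t$ is irrational there is no rational $b\in I_x$ with $b\ge t$. Thus the union of the rectangles only covers the \emph{interior} of each fiber, and the equality holds only modulo the set $\{(x,t):t\in\partial I_x\}$, which has $\fL^1$-null fibers and is therefore $(\fv\times\fL^1)$-null by Fubini. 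The paper states only this weaker (and sufficient) claim. Once you replace ``exact decomposition'' by ``decomposition up to a $(\fv\times\fL^1)$-null set'', your final inclusion and the conclusion go through unchanged.
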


\begin{proof}
Take dense sets $\{a_i\}_{i \in \N} \subset (-\infty,0]$
and $\{b_j\}_{j \in \N} \subset [0,\infty)$ with $a_1=b_1=0$,
and consider the intervals $I_{i,j}:=[a_i,b_j]$.
For every $x \in \sigma(\bm{T}_{\varphi})$
and the transport ray $\gamma_x \in \bm{R}_{\varphi}$ passing through $x$,
there are $i,j \in \N$ such that
\[ \gamma_x^{-1}(\bm{T}_{\varphi})
 \supset [\varphi(x)+a_i,\varphi(x)+b_j]. \]
Let $A_{i,j}$ be the set of points $x \in \sigma(\bm{T}_{\varphi})$
satisfying $\gamma_x^{-1}(\bm{T}_{\varphi}) \supset [\varphi(x)+a_i,\varphi(x)+b_j]$.
We observe from the convexity of $\Dom(\cG)$ in the $\R$-direction that
\[ A_{i,j}=\{x \in \sigma(\bm{T}_{\varphi}) \,|\, (x,a_i),(x,b_j) \in \Dom(\cG)\} \]
which is a Borel set.
Moreover, we have
\[ A_{i,j} \times I_{i,j} \subset \Dom(\cG),\qquad
 (\fv \times \fL^1)\bigg( \Dom(\cG) \setminus \bigcup_{i,j \in \N}
 (A_{i,j} \times I_{i,j}) \bigg)=0. \]
Since $\fm$ is Borel regular,
we can choose compact sets $Z_{i,j}^k \subset \sigma^{-1}(A_{i,j}) \subset \bm{T}_{\varphi}$,
$k \in \N$, with $\fv(A_{i,j} \setminus \bigcup_{k \in \N} \sigma(Z_{i,j}^k))=0$.
Then the family $\{\sigma(Z_{i,j}^k) \times [a_i,b_j]\}_{i,j,k \in \N}$
satisfies the desired properties.
$\qedd$
\end{proof}

\subsection{Qualitative properties derived from MCP}\label{ssc:qual}

Following the lines of \cite[\S\S 5, 9]{BC} and \cite[\S 6]{Ca},
we obtain some qualitative properties of the disintegrated measures
from the measure contraction property \eqref{eq:MCP}.

Given a compact set $Z \subset M$ and $\alpha \in \R$,
let us consider the set
\begin{equation}\label{eq:Z^a}
\widehat{Z}^{\alpha}
 :=\{(x,y) \in \Gamma_{\varphi} \,|\, x \in Z,\, \varphi(y)=\alpha \}.
\end{equation}
Note that, for any $(x,y) \in \widehat{Z}^{\alpha}$, we have
\begin{equation}\label{eq:Zcpt}
d(x,y)=\varphi(y)-\varphi(x)=\alpha -\varphi(x) \le \alpha -\inf_Z \varphi.
\end{equation}
Thus $\widehat{Z}^{\alpha}$ is a (possibly empty) compact set
in the closed set $\Gamma_{\varphi} \subset M \times M$.
For $\lambda \in [0,1]$, define
\begin{equation}\label{eq:Z^a_t}
Z^{\alpha}_{\lambda} :=\big\{\eta(\lambda) \,\big|\, \eta\
 \text{runs over minimal geodesics with}\ \big( \eta(0),\eta(1) \big) \in \widehat{Z}^{\alpha} \big\}.
\end{equation}
Observe that $Z^{\alpha}_0=p_1(\widehat{Z}^{\alpha}) \subset Z$ and
$Z^{\alpha}_1=p_2(\widehat{Z}^{\alpha}) \subset \varphi^{-1}(\alpha)$.
For each $\lambda \in [0,1]$, $Z^{\alpha}_{\lambda}$ is closed and hence compact
(indeed, if $\{\eta_i(\lambda)\}_{i \in \N} \subset Z^{\alpha}_{\lambda}$ is convergent,
then a subsequence $\{\eta_{i_j}\}_{j \in \N}$ is convergent in the uniform topology,
and hence $\lim_{i \to \infty}\eta_i(\lambda) \in Z^{\alpha}_{\lambda}$).

By virtue of \eqref{eq:Zcpt}, taking small $Z$ and appropriately choosing $\alpha$
allows us to assume that $\bigcup_{\lambda \in [0,1]} Z^{\alpha}_{\lambda}$
is contained in an open set $U \subset M$ on which $\Ric_N \ge K$ holds
for some $N \in (n,\infty)$ and $K \in \R$.
Then we have the following useful estimate.
Recall \eqref{eq:bs} for the definition of the function $\bs_{K/(N-1)}$.

\begin{lemma}\label{lm:MCPbd}
Let $Z \subset M$ be compact, $\alpha \in \R$, and consider
$\widehat{Z}^{\alpha}, Z^{\alpha}_{\lambda}$ as in \eqref{eq:Z^a}, \eqref{eq:Z^a_t}.
Suppose that $\bigcup_{\lambda \in [0,1]} Z^{\alpha}_{\lambda}$
is contained in an open set $U \subset M$ on which $\Ric_N \ge K$ holds
for some $N \in (n,\infty)$ and $K \in \R$.
Then we have, for all $\lambda \in (0,1)$,
\begin{equation}\label{eq:MCPbd}
\fm(Z^{\alpha}_{\lambda}) \ge (1-\lambda) \inf_{(x,y) \in \widehat{Z}^{\alpha}}
 \bigg\{ \frac{\bs_{K/(N-1)}((1-\lambda)d(x,y))}{\bs_{K/(N-1)}(d(x,y))} \bigg\}^{N-1} \fm(Z^{\alpha}_0).
\end{equation}
\end{lemma}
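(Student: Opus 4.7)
The plan is to derive the estimate from the measure contraction property (Theorem~\ref{th:MCP}) applied to the reverse Finsler structure $\rev{F}$, noting that $\Ric_N \ge K$ on $U$ is equivalent to $\rev{\Ric}_N(v) \ge K \rev{F}(v)^2$ for $v \in TU$ (see Definition~\ref{df:rev}), so Theorem~\ref{th:MCP} is at our disposal for $(U, \rev{F}, \fm|_U)$. Setting $\mu := 1 - \lambda$, for every minimal $F$-geodesic $\eta$ from $x$ to $y$ the reversed curve $\bar{\eta}(s) := \eta(1-s)$ is a minimal $\rev{F}$-geodesic from $y$ to $x$ of $\rev{F}$-length $\rev{d}(y,x) = d(x,y)$, and satisfies $\bar{\eta}(\mu) = \eta(\lambda)$. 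Hence $Z^\alpha_\lambda$ coincides with the set of $\rev{F}$-geodesic interpolants at parameter $\mu$ emanating from points of $Z^\alpha_1$ and terminating at points of $Z^\alpha_0$.

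If $Z^\alpha_1$ were a single point $y_0$, a direct application of Theorem~\ref{th:MCP} with base point $y_0$ to the set $Z^\alpha_0$ would immediately yield the claim. In general, I would localize as follows. Fix $\ve > 0$ and partition $Z^\alpha_1$ into a countable disjoint family of Borel sets $\{B_k\}$ of $F$-diameter less than $\ve$, choosing a representative $y_k \in B_k$. Using $\fm(\bm{B}_\varphi) = 0$ (Proposition~\ref{pr:Bnull}) together with Theorem~\ref{th:select}, one obtains a Borel map $T : Z^\alpha_0 \to Z^\alpha_1$ with $(x, T(x)) \in \widehat{Z}^\alpha$ for $\fm$-almost every $x \in Z^\alpha_0$, and sets $A_k := T^{-1}(B_k)$, giving an essentially disjoint Borel partition of $Z^\alpha_0$. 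For each $k$, Theorem~\ref{th:MCP} applied to $\rev{F}$ at the base point $y_k$ and the set $A_k$ gives
\[ \fm(\widetilde{A}_k) \ge \mu \inf_{x \in A_k} \bigg\{ \frac{\bs_{K/(N-1)}(\mu\, d(x, y_k))}{\bs_{K/(N-1)}(d(x, y_k))} \bigg\}^{N-1} \fm(A_k), \]
where $\widetilde{A}_k$ denotes the set of $\rev{F}$-geodesic interpolants at parameter $\mu$ from $y_k$ to points of $A_k$.

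The main obstacle is that $\widetilde{A}_k$ is not a subset of $Z^\alpha_\lambda$, because the $\rev{F}$-geodesic from $y_k$ to a given $x \in A_k$ is in general not the reverse of the transport ray through $x$ (which terminates at $T(x) \in B_k$, generally not at $y_k$). One must therefore pass to the limit $\ve \downarrow 0$, exploiting the proximity $\rev{d}(y_k, T(x)) < \ve$. Thanks to the uniform bound $d(x,y) \le \alpha - \inf_Z \varphi$ on the compact set $\widehat{Z}^\alpha$ (see~\eqref{eq:Zcpt}) and the smooth dependence of Finsler geodesics on their endpoints, the $\rev{F}$-geodesic interpolants from $y_k$ to $x$ differ from the true transport-ray interpolants $\eta_{(x, T(x))}(\lambda) \in Z^\alpha_\lambda$ by $O(\ve)$, uniformly in $x \in A_k$. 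Moreover, overlaps between distinct $\widetilde{A}_k$ become asymptotically negligible as $\ve \downarrow 0$, since distinct transport rays are non-branching off an $\fm$-null set. Summing the estimates over $k$ and using continuity of $\bs_{K/(N-1)}(\mu r)/\bs_{K/(N-1)}(r)$ in $r$, the limit $\ve \downarrow 0$ recovers the desired inequality~\eqref{eq:MCPbd}.
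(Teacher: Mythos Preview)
Your approach is essentially the paper's: discretize $Z^\alpha_1$, apply the measure contraction property for the reverse structure $\rev{F}$ at each discrete base point, sum, and pass to the limit. The paper's execution is somewhat cleaner on two technical points. First, rather than partitioning $Z^\alpha_1$ into $\ve$-small pieces and pulling back through a selected transport map, the paper fixes a finite dense set $\{y_1,\dots,y_k\}\subset Z^\alpha_1$ and partitions $Z^\alpha_0$ into Voronoi cells $A^k_i=\{x:d(x,y_i)=\min_j d(x,y_j)\}$; this avoids the detour through Proposition~\ref{pr:Bnull} and the selection theorem. Second, the overlap of the interpolant sets is handled \emph{at each finite stage} rather than asymptotically: a short triangle-inequality argument shows $A^k_i(\lambda)\cap A^k_j(\lambda)\subset\{z:d(z,y_i)=d(z,y_j)\}$, which is $\fm$-null, so no limiting is needed for disjointness. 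The passage to the limit is then phrased as Hausdorff convergence of $\bigcup_i A^k_i(\lambda)$ to $Z^\alpha_\lambda$ combined with upper semi-continuity of $\fm$ on compact sets---a more robust formulation than your appeal to ``smooth dependence of geodesics on endpoints,'' which fails at the cut locus (only continuous dependence via compactness of the space of geodesics is available, and that is exactly what the Hausdorff argument encodes).
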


\begin{proof}
There is nothing to prove if $\fm(Z^{\alpha}_0)=0$,
thus we assume $\fm(Z^{\alpha}_0)>0$.
Choose a dense set $\{y_i\}_{i \in \N}$ in $Z^{\alpha}_1$ and,
for each $k \in \N$, decompose $Z^{\alpha}_0$ into
\[ A^k_i :=\{x \in Z^{\alpha}_0 \,|\, d(x,y_i)=\min_{1 \le j \le k}d(x,y_j)\},
 \qquad i=1,\ldots,k. \]
Then $A^k_i$ is a compact set and, for each $\lambda \in (0,1)$,
the measure contraction property \eqref{eq:MCP} under $\Ric_N \ge K$
(with respect to $\rev{F}$, to be precise) yields
\[ \fm\big( A^k_i(\lambda) \big) \ge (1-\lambda) \inf_{x \in A^k_i}
 \bigg\{ \frac{\bs_{K/(N-1)}((1-\lambda)d(x,y_i))}{\bs_{K/(N-1)}(d(x,y_i))} \bigg\}^{N-1} \fm(A^k_i), \]
where we set
\[ A^k_i(\lambda):=\{\eta(\lambda) \,|\, \eta\
 \text{runs over minimal geodesics with}\ \eta(0) \in A^k_i,\, \eta(1)=y_i \}. \]
We remark that, for $i \neq j$,
\[ A^k_i(\lambda) \cap A^k_j(\lambda) \subset \{ x \in M \,|\, d(x,y_i)=d(x,y_j) \} \]
and hence it has null measure.
Letting $k \to \infty$, $\bigcup_{i=1}^k A^k_i(\lambda)$
converges to $Z^{\alpha}_{\lambda}$ in the Hausdorff distance
(with respect to any distance structure comparable to the Finsler structure).
Therefore the upper semi-continuity of the measure $\fm$ with respect to
the Hausdorff distance on compact sets
(the author could not find a good reference, but see for example \cite{BV})
shows \eqref{eq:MCPbd}.
$\qedd$
\end{proof}

The bound \eqref{eq:MCPbd} is a weak one coming only from the measure contraction property,
in fact as $\lambda \to 1$ it converges to the trivial bound $\fm(Z^{\alpha}_1) \ge 0$.
We will give a sharper estimate based on the curvature-dimension condition
in the next subsection.
Nonetheless, \eqref{eq:MCPbd} can be used to obtain the important qualitative property
of disintegrated measures, namely the absolute continuity with respect to the Lebesgue measure
on $\fv$-almost all transport rays.

Recall the disintegration $\fm|_{\bm{T}_{\varphi}}=\bm{\mu}_{\gamma} \,\fv(d\gamma)$
in \eqref{eq:disint} and consider the Lebesgue decomposition
\begin{equation}\label{eq:Lebd}
\fm|_{\bm{T}_{\varphi}}
 =\rho \cdot \cG_{\sharp}[(\fv \times \fL^1)|_{\Dom(\cG)}]+\omega
\end{equation}
into the absolutely continuous and singular parts with respect to
$\cG_{\sharp}[(\fv \times \fL^1)|_{\Dom(\cG)}]$.
We remark that $\fv$ is regarded as a measure on $\bm{R}_{\varphi}$
in the former expression $\fm|_{\bm{T}_{\varphi}}=\bm{\mu}_{\gamma} \,\fv(d\gamma)$
and as a measure on $\sigma(\bm{T}_{\varphi})$ in the latter \eqref{eq:Lebd}
by the identification between $\bm{R}_{\varphi}$ and $\sigma(\bm{T}_{\varphi})$
(see \S \ref{ssc:disint}).
Recalling that $\bm{\mu}_{\gamma}$ is regarded as a measure on $\Dom(\gamma)$,
we also have the decomposition along $\gamma \in \bm{R}_{\varphi}$,
\begin{equation}\label{eq:Lebd'}
\bm{\mu}_{\gamma}
 =\rho_{\gamma} \cdot \fL^1|_{\Dom(\gamma)} +\omega_{\gamma},
\end{equation}
whose integration with respect to $\fv$ gives rise to \eqref{eq:Lebd}.

\begin{proposition}\label{pr:ac}
For $\fv$-almost every $\gamma \in \bm{R}_{\varphi}$, we have the following.
\begin{enumerate}[{\rm (i)}]
\item
$\bm{\mu}_{\gamma}$ is absolutely continuous with respect to
$\fL^1|_{\Dom(\gamma)}$, that is, $\omega_{\gamma}(\Dom(\gamma))=0$.

\item
The density function $\rho_{\gamma}:\Dom(\gamma) \lra [0,\infty)$
as in \eqref{eq:Lebd'} satisfies
\begin{equation}\label{eq:MCPbd'}
\bigg\{ \frac{\bs_{K/(N-1)}(b-t)}{\bs_{K/(N-1)}(b-s)} \bigg\}^{N-1}
 \le \frac{\rho_{\gamma}(t)}{\rho_{\gamma}(s)}
 \le \bigg\{ \frac{\bs_{K/(N-1)}(t-a)}{\bs_{K/(N-1)}(s-a)} \bigg\}^{N-1}
\end{equation}
for any $a<s<t<b$ such that $a,b \in \Dom(\gamma)$,
by assuming that $\Ric_N \ge K$ holds on $\gamma([a,b])$
with $N \in (n,\infty)$ and $K \in \R$.

\item
$\rho_{\gamma}$ is positive and locally Lipschitz on the interior of $\Dom(\gamma)$.
\end{enumerate}
\end{proposition}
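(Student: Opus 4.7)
My plan is to derive (i)--(iii) from the measure contraction bound Lemma~\ref{lm:MCPbd}, following the strategy of \cite[\S\S 5, 9]{BC} and \cite[\S 6]{Ca}. By Lemma~\ref{lm:Rconv}, it suffices to work on one piece $\sigma(Z) \times [a,b] \subset \Dom(\cG)$ with $Z \subset \bm{T}_\varphi$ compact. Shrinking $Z$ and $[a,b]$ using the continuity of $\Ric_N$ on the unit tangent sphere bundle, I may further assume that $\bigcup_{\lambda \in [0,1]} Z^\alpha_\lambda$ lies in an open set $U$ on which $\Ric_N \ge K$ holds for some $N \in (n, \infty)$ and $K \in \R$, uniformly for $\alpha$ in the relevant range.

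For (i), the key step is to extract from \eqref{eq:MCPbd} a two-sided slab estimate
\[ c\, \fv(Z')\, \fL^1(J) \le \fm\bigl( \cG(Z' \times J) \bigr) \le C\, \fv(Z')\, \fL^1(J) \]
for every compact $Z' \subset \sigma(Z)$ and every closed subinterval $J \subset [a,b]$, with $c, C$ depending only on $K$, $N$, $[a,b]$. This is obtained by choosing $\alpha$ slightly beyond $b$ and applying \eqref{eq:MCPbd} to compare the $\fm$-measure of the thin slab $\cG(Z' \times J)$ with that of its flow image, which spans a definite fraction of the thick slab $\cG(Z' \times [a,b])$; the uniform MCP distortion factor delivers the desired ratios in both directions. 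Covering any $\fL^1$-null $B \subset [a,b]$ by small intervals then gives $\fm(\cG(Z' \times B)) = 0$, so $\omega_\gamma(\Dom(\gamma)) = 0$ for $\fv$-a.e.\ $\gamma$ by the disintegration \eqref{eq:disint}, proving (i) and making $\rho_\gamma$ well-defined as in \eqref{eq:Lebd'}.

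For (ii), with (i) in hand, I apply Lemma~\ref{lm:MCPbd} to a shrinking family of pencils $Z_\varepsilon$ centered at $\gamma(t)$ of $\varphi$-thickness $\varepsilon$ and with vanishing transverse $\fv$-size, choosing $\alpha$ slightly above $b$. For $s \in (t, b)$ and $\lambda = (s-t)/(b-t)$, one has $d(x,y) \to b - t$ and $(1-\lambda) d(x,y) \to b - s$ uniformly on $\widehat{Z_\varepsilon}^{\,\alpha}$. Expanding, via Lebesgue differentiation,
\[ \fm(Z_\varepsilon) \approx \rho_\gamma(t)\, \varepsilon\, \fv\bigl( p(Z_\varepsilon) \bigr), \qquad
 \fm\bigl( (Z_\varepsilon)^\alpha_\lambda \bigr) \approx \rho_\gamma(s)\, (1-\lambda)\, \varepsilon\, \fv\bigl( p(Z_\varepsilon) \bigr), \]
and inserting into \eqref{eq:MCPbd}, the common factor $(1-\lambda)$ cancels and the limit $\varepsilon \downarrow 0$ yields the lower bound in \eqref{eq:MCPbd'} at $\fv$-a.e.\ $\gamma$. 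The upper bound follows by running the same argument for the reverse Finsler structure $\rev{F}$, under which $-\varphi$ is $1$-Lipschitz, $\Ric_N \ge K$ is preserved, and the reverse curve of $\gamma$ is itself a transport ray; the role of the forward endpoint $b$ is then played by $a$, reproducing the ratio $\{\bs_{K/(N-1)}(t-a)/\bs_{K/(N-1)}(s-a)\}^{N-1}$.

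For (iii), the probability normalization of $\bm{\mu}_\gamma$ (Theorem~\ref{th:needle}(i)) together with (i) rules out $\rho_\gamma \equiv 0$ on $\Dom(\gamma)$. Applying \eqref{eq:MCPbd'} with a reference point $s_0$ at which $\rho_\gamma(s_0) > 0$, and using local curvature bounds available on any compact subinterval of the interior by continuity of $\Ric_N$, propagates positivity to the full interior. The same bound sandwiches $\rho_\gamma(t)/\rho_\gamma(s)$ between smooth positive functions of $(s, t)$ that equal $1$ on the diagonal, delivering local Lipschitz continuity of $\rho_\gamma$. The main obstacle is the slab estimate in (i): Lemma~\ref{lm:MCPbd} provides only a one-sided lower bound on $\fm(Z^\alpha_\lambda)$, so its inversion into a genuine upper bound on $\fm$ of a thin slab requires a careful flow comparison with MCP distortion kept uniform across $Z' \subset \sigma(Z)$ and $J \subset [a,b]$, and leans essentially on the compactness afforded by Lemma~\ref{lm:Rconv}.
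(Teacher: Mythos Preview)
Your treatment of (ii) and (iii) matches the paper's: localize \eqref{eq:MCPbd} in the ray parameter via Lebesgue differentiation to get the first inequality in \eqref{eq:MCPbd'}, repeat for $(-\varphi,\rev{F})$ to get the second, then read off positivity and local Lipschitz continuity. The only minor difference is that the paper does not shrink the transverse $\fv$-size in (ii); it applies \eqref{eq:MCPbd} on a fixed product piece from Lemma~\ref{lm:Rconv}, disintegrates both sides with $\fm|_{\bm{T}_\varphi}=(\rho_\gamma\cdot\fL^1)\,\fv(d\gamma)$, and lets the resulting integral inequality hold for all compact $Z'\subset\sigma(Z)$, which yields the per-ray inequality for $\fv$-a.e.\ $\gamma$ without a two-variable differentiation.

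For (i) your route diverges from the paper's and carries a gap. You aim for a two-sided slab estimate $c\,\fv(Z')\,|J|\le \fm(\cG(Z'\times J))\le C\,\fv(Z')\,|J|$ and then cover $\fL^1$-null sets by short intervals. Two issues. First, the lower inequality with $\fv(Z')=\fm(\sigma^{-1}(Z'))$ need not hold: $\sigma^{-1}(Z')$ is the union of the \emph{full} rays through $Z'$, which can carry arbitrarily more $\fm$-mass than the truncated block $\cG(Z'\times[a,b])$; fortunately this direction is irrelevant for absolute continuity. Second, and more seriously, you correctly note that \eqref{eq:MCPbd} is only a lower bound on $\fm(Z^\alpha_\lambda)$, but you never explain how to ``invert'' it into the upper bound $\fm(\cG(Z'\times J))\le C\,|J|\,\fm(\cG(Z'\times[a,b]))$. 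This can be done---partition $[a,b]$ into $k$ equal pieces, use MCP toward a point beyond $b$ (and, for slabs in the right half, the reverse MCP toward a point below $a$) to compare any single thin slab with each of the others up to a uniformly positive factor, then sum---but that argument is absent from your proposal, and ``careful flow comparison'' is not a proof.

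The paper avoids this entirely. Its proof of (i) is a contradiction argument that uses only the \emph{lower} bound in \eqref{eq:MCPbd}: suppose the singular part has positive $\fm$-mass, pick a compact $Z$ inside its support with $\fm(Z)>0$, and observe that MCP forces $\fm(Z^\alpha_\lambda)>0$ for every $\lambda\in[0,1)$, hence $\int_0^{1/2}\fm(Z^\alpha_\lambda)\,d\lambda>0$. Pulling back by $\cG$ and applying Fubini rewrites this as a positive integral, over $(\cG^{-1})_\sharp(\fm|_{\bm{T}_\varphi})$, of the $\fL^1$-measure of a $\lambda$-set whose affine reparametrization records when the starting point lies in $Z^\alpha_0\subset Z$. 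But the hypothesis $\cG_\sharp[(\fv\times\fL^1)](Z)=0$ forces $\fL^1(\{t:\cG(x,t)\in Z^\alpha_0\})=0$ for $\fv$-a.e.\ $x$, and since $(p_1)_\sharp[(\cG^{-1})_\sharp\fm]=\fv$ this makes the integral vanish, a contradiction. This is shorter and sidesteps the upper-bound difficulty you flagged.
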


\begin{proof}
(i)
Take a Borel set $A \subset \bm{T}_{\varphi}$ such that
$\cG_{\sharp}[(\fv \times \fL^1)|_{\Dom(\cG)}](A)=0$ as well as $\omega(A)=\omega(\bm{T}_{\varphi})$.
It suffices to show $\fm(A)=0$ since $\fm(A)=\omega(A)$ and
\[ \omega(A)
 =\int_{\bm{R}_{\varphi}} \omega_{\gamma}\big( \gamma^{-1}(A) \big) \,\fv(d\gamma). \]
Thus we suppose $\fm(A)>0$ and derive a contradiction.

Since $\fm$ is Borel regular,
we can take a compact set $Z \subset A$ still enjoying $\fm(Z)>0$.
For each $x \in Z$, there is $y \neq x$ with either $(x,y) \in \Gamma_{\varphi}$
or $(y,x) \in \Gamma_{\varphi}$.
By taking smaller $Z$ if necessary, we can assume that the former holds for all $x \in Z$
(consider $-\varphi$ with respect to $\rev{F}$
if $(y,x) \in \Gamma_{\varphi}$ for almost all $x \in Z$).
Then, along the unique minimal geodesic $\eta:[0,1] \lra \bm{T}_{\varphi}$
from $x$ to $y$, we have
\[ x \in Z^{\varphi(\eta(\lambda))}_0 \qquad \text{for all}\ \lambda \in [0,1]. \]
Hence for some $\alpha>\varphi(x)$ we have $\fm(Z^{\alpha}_0)>0$,
and $Z^{\alpha}_{\lambda}$ satisfies the hypothesis of Lemma~\ref{lm:MCPbd}
for some $N$ and $K$.
Then Lemma~\ref{lm:MCPbd} ensures $\fm(Z^{\alpha}_{\lambda})>0$ for all $\lambda \in [0,1)$,
and it follows from Fubini's theorem that
\begin{align*}
0 &< \int_0^{1/2} \fm(Z^{\alpha}_{\lambda}) \,d\lambda
 =\int_0^{1/2} [(\cG^{-1})_{\sharp}(\fm|_{\bm{T}_{\varphi}})]
 \big(\cG^{-1}(Z^{\alpha}_{\lambda}) \big) \,d\lambda \\
&= \int_{\Dom(\cG)} \fL^1\bigg( \bigg\{ \lambda \in \bigg[ 0,\frac{1}{2} \bigg] \,\bigg|\,
 \cG(x,s) \in Z^{\alpha}_{\lambda} \bigg\} \bigg) \,[(\cG^{-1})_{\sharp}(\fm|_{\bm{T}_{\varphi}})] (dxds).
\end{align*}
Recall that $\cG(x,s) \in Z^{\alpha}_{\lambda}$ if and only if $\cG(x,s^x_0) \in Z^{\alpha}_0$
and $\cG(x,s^x_1) \in \varphi^{-1}(\alpha)$ for some $s^x_0<s<s^x_1$ with
$s=(1-\lambda)s^x_0 +\lambda s^x_1$, and $s^x_1$ is uniquely determined by $x$.
Therefore we obtain
\begin{equation}\label{eq:ac}
0<\int_{\Dom(\cG)} \fL^1\bigg( \bigg\{ \lambda \in \bigg[ 0,\frac{1}{2} \bigg] \,\bigg|\,
 \cG\bigg( x,\frac{s-\lambda s^x_1}{1-\lambda} \bigg) \in Z^{\alpha}_0 \bigg\} \bigg)
 \,[(\cG^{-1})_{\sharp}(\fm|_{\bm{T}_{\varphi}})] (dxds).
\end{equation}

However, the hypothesis $\cG_{\sharp}[(\fv \times \fL^1)|_{\Dom(\cG)}](A)=0$
and $Z^{\alpha}_0 \subset Z \subset A$ imply
\begin{align*}
0 &=\cG_{\sharp}[(\fv \times \fL^1)|_{\Dom(\cG)}](Z^{\alpha}_0)
 =(\fv \times \fL^1)\big( \cG^{-1}(Z^{\alpha}_0) \big) \\
&=\int_{\sigma(\bm{T}_{\varphi})} \fL^1(\{t \in \R \,|\, \cG(x,t) \in Z^{\alpha}_0\}) \,\fv(dx)
\end{align*}
and hence $\fL^1(\{t \in \R \,|\, \cG(x,t) \in Z^{\alpha}_0\})=0$
for $\fv$-almost all $x \in \sigma(\bm{T}_{\varphi})$.
This contradicts \eqref{eq:ac} by noticing
\[ (p_1)_{\sharp} [(\cG^{-1})_{\sharp} (\fm|_{\bm{T}_{\varphi}})]
 =(p_1 \circ \cG^{-1})_{\sharp}(\fm|_{\bm{T}_{\varphi}})
 =\sigma_{\sharp}(\fm|_{\bm{T}_{\varphi}}) =\fv. \]

(ii)
Thanks to Lemma~\ref{lm:Rconv} and the hypothesis $\Ric_N \ge K$ on $\gamma([a,b])$,
we can apply \eqref{eq:MCPbd} with $\alpha=b$ and $\lambda=(t-s)/(b-s)$.
Put
\[ t_{\delta} :=s+\delta+\lambda\big( b-(s+\delta) \big) =t+(1-\lambda)\delta,
 \qquad \delta \in [0,b-s). \]
Then we obtain from \eqref{eq:MCPbd} and
$\fm|_{\bm{T}_{\varphi}}=(\rho_{\gamma} \cdot \fL^1|_{\Dom(\gamma)}) \,\fv(d\gamma)$ that,
for small $\ve>0$,
\[ (1-\lambda) \int_0^{\ve} \rho_{\gamma}(t_{\delta}) \,d\delta
 \ge (1-\lambda) \int_0^{\ve}
 \bigg\{ \frac{\bs_{K/(N-1)}((1-\lambda)(b-s-\delta))}{\bs_{K/(N-1)}(b-s-\delta)} \bigg\}^{N-1}
 \rho_{\gamma}(s+\delta) \,d\delta. \]
Thus we have for $\fL^1$-almost all $t$ and $s$
\begin{align*}
\rho_{\gamma}(t)
&= \lim_{\ve \downarrow 0} \frac{1}{\ve} \int_0^{\ve}
 \rho_{\gamma}(t_{\delta}) \,d\delta \\
&\ge \bigg\{ \frac{\bs_{K/(N-1)}((1-\lambda)(b-s))}{\bs_{K/(N-1)}(b-s)} \bigg\}^{N-1}
 \lim_{\ve \downarrow 0} \frac{1}{\ve} \int_0^{\ve} \rho_{\gamma}(s+\delta) \,d\delta \\
&= \bigg\{ \frac{\bs_{K/(N-1)}(b-t)}{\bs_{K/(N-1)}(b-s)} \bigg\}^{N-1} \rho_{\gamma}(s).
\end{align*}
The other inequality in \eqref{eq:MCPbd'} is derived from the same argument
for $-\varphi$ with respect to $\overleftarrow{F}$.
These estimates allow us to take the continuous version of $\rho_{\gamma}$ (see (iii) below),
and hence \eqref{eq:MCPbd'} indeed holds for all $a<s<t<b$.

(iii)
The positivity is obvious from \eqref{eq:MCPbd'},
and taking the logarithm of \eqref{eq:MCPbd'} yields the local Lipschitz continuity.
$\qedd$
\end{proof}

We remark that Klartag \cite[Theorem~1.2]{Kl} further showed
the smoothness of the density function $\rho_{\gamma}$,
while the local Lipschitz continuity is sufficient for our purpose.

\subsection{Quantitative estimates derived from $\CD$}\label{ssc:quan}

Having the properties of $\rho_{\gamma}$ in Proposition~\ref{pr:ac} at hand,
we derive the following sharper quantitative estimate from the curvature-dimension condition
along the lines of \cite[Theorem~4.2]{CM}.
Define
\[ \bm{\sigma}^{(\lambda)}_{\kappa}(r)
 :=\frac{\bs_{\kappa}(\lambda r)}{\bs_{\kappa}(r)},
 \qquad \kappa \in \R,\, \lambda \in (0,1), \]
for $r>0$ if $\kappa \le 0$ and for $r \in (0,\pi/\sqrt{\kappa})$ if $\kappa>0$.
We set $\bm{\sigma}^{(\lambda)}_{\kappa}(0):=\lambda$ and, when $\kappa>0$,
$\bm{\sigma}^{(\lambda)}_{\kappa}(r):=\infty$ for $r \ge \pi/\sqrt{\kappa}$.
Recall from \eqref{eq:btau} that
$\bm{\tau}^{(\lambda)}_{K,N}(r)^N =\lambda \bm{\sigma}^{(\lambda)}_{K/(N-1)}(r)^{N-1}$
for $N \neq 0$.

\begin{theorem}[$N \in (-\infty,0) \cup [n,\infty)$ case]\label{th:CDbd}
Suppose that $(M,F,\fm)$ satisfies $\Ric_N \ge K$
for some $N \in (-\infty,0) \cup [n,\infty)$ $($with $N \neq 1$ if $n=1)$
and $K \in \R$.
Then, along $\fv$-almost every $\gamma \in \bm{R}_{\varphi}$, we have
\begin{equation}\label{eq:CDbd}
\rho_{\gamma}\big( (1-\lambda)s+\lambda t \big)
 \ge \left\{ \bm{\sigma}_{K/(N-1)}^{(1-\lambda)}(t-s) \rho_{\gamma}(s)^{1/(N-1)}
 +\bm{\sigma}_{K/(N-1)}^{(\lambda)}(t-s) \rho_{\gamma}(t)^{1/(N-1)} \right\}^{N-1}
\end{equation}
for all $a<s<t<b$ with $a,b \in \Dom(\gamma)$ and $\lambda \in (0,1)$.
\end{theorem}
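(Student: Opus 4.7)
The plan is to reduce \eqref{eq:CDbd} to the global curvature-dimension condition $\CD(K,N)$ of Theorem~\ref{th:CD}, applied to two infinitesimally thin ``tubular'' probability measures supported on short subsegments of nearby transport rays, and then to upgrade the resulting integrated inequality to a pointwise bound on $\rho_\gamma$ using the continuity supplied by Proposition~\ref{pr:ac}(iii). By Lemma~\ref{lm:Rconv} I may work inside one compact rectangle $\sigma(Z_i) \times [a_i,b_i] \subset \Dom(\cG)$, and by Proposition~\ref{pr:ac} I may fix $\gamma_0 \in \bm{R}_\varphi$ passing through $x_0 = \gamma_0(0) \in \sigma(Z_i)$ along which $\rho_{\gamma_0}$ is positive and continuous on $(a,b)$.

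Fix $a<s<t<b$, $\lambda \in (0,1)$, and set $u:=(1-\lambda)s+\lambda t$. For small $\delta,\epsilon>0$ and a small neighborhood $U_\epsilon$ of $x_0$ in $\sigma(\bm{T}_\varphi)$ I set
\[ A_r := \cG\big(U_\epsilon \times [r-\delta, r+\delta]\big), \qquad r\in\{s,t,u\}, \]
and define $\mu_0 := \fm(A_s)^{-1}\fm|_{A_s}$, $\mu_1 := \fm(A_t)^{-1}\fm|_{A_t}$. The first key step is to identify the unique $d^2$-optimal coupling. For $\delta$ small one has $\sup_{A_s}\varphi < \inf_{A_t}\varphi$, and the inequality $(\varphi(x_2)-\varphi(x_1))(\varphi(y_2)-\varphi(y_1))\ge 0$ is automatic for pairs arising from the ray structure; Lemma~\ref{lm:d^2} together with Theorem~\ref{th:cymo} then shows that the map $T(\cG(x,s')) := \cG(x, s'+(t-s))$ is the $d^2$-optimal transport from $\mu_0$ to $\mu_1$, with $d(\cG(x,s'),T(\cG(x,s')))=t-s$ by Lemma~\ref{lm:conca}. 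Consequently the $W_2$-geodesic is $\mu_\lambda = (T_\lambda)_\sharp \mu_0$ with $T_\lambda(\cG(x,s')) = \cG(x, s'+\lambda(t-s))$, whose support lies in $A_u$.

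Next I substitute these measures into \eqref{eq:CD} (or \eqref{eq:CDneg} if $N<0$). Using the disintegration formula \eqref{eq:disint}, all three R\'enyi integrals become iterated integrals of $\rho_\gamma^{(N-1)/N}$ over an $\fL^1$-interval of length $2\delta$ against $\fv$ on $U_\epsilon$, while the $\bm{\tau}$-weights depend only on $d=t-s$. Dividing by $\fv(U_\epsilon)\cdot 2\delta$ and sending $\delta \downarrow 0$ by dominated convergence and continuity of $\rho_{\gamma_0}$, then shrinking $\epsilon \downarrow 0$ via Lebesgue differentiation in $\fv$, I expect to obtain
\[ \rho_{\gamma_0}(u)^{-1/N} \;\ge\; \bm{\tau}^{(1-\lambda)}_{K,N}(t-s)\, \rho_{\gamma_0}(s)^{-1/N} \,+\, \bm{\tau}^{(\lambda)}_{K,N}(t-s)\, \rho_{\gamma_0}(t)^{-1/N} \]
for $\fv$-a.e.\ $\gamma_0$ and all $s<t<b$, $\lambda \in (0,1)$ in a countable dense set; continuity in the parameters and a countable exhaustion of the rectangles of Lemma~\ref{lm:Rconv} then handle all parameters and all $\fv$-a.e.\ rays. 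Finally, the identity $\bm{\tau}^{(\lambda)}_{K,N}(r)^N = \lambda\,\bm{\sigma}^{(\lambda)}_{K/(N-1)}(r)^{N-1}$ rearranges this to \eqref{eq:CDbd}.

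The main obstacle is the first step: rigorously identifying the $d^2$-optimal transport between $\mu_0$ and $\mu_1$ as precisely the ``straight'' ray map $T$, so that no mass escapes the tube structure and $\mu_\lambda$ is concentrated in $A_u$. This demands a careful marriage of Lemma~\ref{lm:d^2} (whose ordering hypothesis $\sup_{A_s}\varphi \le \inf_{A_t}\varphi$ is delicate in the non-reversible setting where $d(\cdot,\cdot)$ is asymmetric), the uniqueness portion of Lemma~\ref{lm:conca}, and the Brenier--McCann uniqueness of $d^2$-optimal couplings with absolutely continuous first marginal. A secondary technical point is to realize $\int \rho_\lambda^{(N-1)/N}\,d\fm$ as an integral of $\rho_\gamma^{(N-1)/N}$ against $\fv$ with the correct Jacobian along the intermediate tube $A_u$; here the positivity and local Lipschitz continuity of $\rho_\gamma$ from Proposition~\ref{pr:ac}(iii) are indispensable.
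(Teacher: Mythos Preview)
Your localization scheme is sound and matches the paper's strategy up to the point where you extract a pointwise inequality, but the final step contains a genuine gap. With tubes of \emph{equal} width $2\delta$ at both $s$ and $t$ (which is the case $c=1$ in the paper's notation), the localization of the $\CD(K,N)$ inequality yields only
\[
\rho_{\gamma}(u)^{1/N} \ge \bm{\tau}^{(1-\lambda)}_{K,N}(t-s)\,\rho_{\gamma}(s)^{1/N}
 + \bm{\tau}^{(\lambda)}_{K,N}(t-s)\,\rho_{\gamma}(t)^{1/N},
\]
not the displayed inequality with exponents $-1/N$ (which, for $N>1$, would give an upper rather than lower bound on $\rho_\gamma(u)$). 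More importantly, the identity $\bm{\tau}^{(\lambda)}_{K,N}(r)^N=\lambda\,\bm{\sigma}^{(\lambda)}_{K/(N-1)}(r)^{N-1}$ does \emph{not} convert this $\bm{\tau}$-bound into \eqref{eq:CDbd}: already for $K=0$ the former is concavity of $\rho_\gamma^{1/N}$ while the latter is the strictly stronger concavity of $\rho_\gamma^{1/(N-1)}$. The paper addresses exactly this point in Remark~\ref{rm:CDbd}(a).

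The missing idea is to allow the tubes at $s$ and $t$ to have \emph{different} widths, introducing a free parameter $c>0$ so that the tube at $t_\lambda$ has width $c_\lambda\varepsilon$ with $c_\lambda=(1-\lambda)+\lambda c$. Applying $\CD(K,N)$ now produces an inequality of the form $c_\lambda f(\lambda)\ge\{\bm{\tau}^{(1-\lambda)}_{K,N}f(0)^{1/N}+\bm{\tau}^{(\lambda)}_{K,N}c^{1/N}f(1)^{1/N}\}^N$ with $f(\lambda)=\rho_\gamma(t_\lambda)$, and optimizing over $c$ (the paper's choice \eqref{eq:c}) is precisely what upgrades the $\bm{\tau}$-coefficients to the $\bm{\sigma}$-coefficients and the exponent $1/N$ to $1/(N-1)$. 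Without this shrinking/expanding transport trick your argument cannot reach \eqref{eq:CDbd}.
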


\begin{proof}
One can prove \eqref{eq:CDbd} by a localization argument similar to
the derivation of \eqref{eq:MCPbd'} from \eqref{eq:MCPbd} with the help of Lemma~\ref{lm:Rconv}.
Fix a compact set $Z \subset M$, $\alpha \in \R$ and $l \gg \ve>0$ satisfying
$\fv(\sigma(Z^{\alpha}_1))>0$ and $\sup_{Z^{\alpha}_0} \varphi \le \alpha -l-\ve$,
where $Z^{\alpha}_0$ is as in \eqref{eq:Z^a_t}.
Put $s:=\alpha-l$, $t:=\alpha$, $t_{\lambda}:=(1-\lambda)s+\lambda t$ and
\[ \mu_{\lambda}:=\frac{1}{\fv(\sigma(Z^{\alpha}_1)) \cdot c_{\lambda} \ve}
 \cdot \cG_{\sharp} \big[ (\fv \times \fL^1)|_{\Omega_{\lambda}} \big] \in \cP(M) \]
for $\lambda \in [0,1]$, where
\begin{align*}
\Omega_{\lambda}
&:=\{ (x,r) \,|\, x \in \sigma(Z^{\alpha}_1),\,
 r \in [t_{\lambda}-c_{\lambda} \ve-\varphi(x),t_{\lambda}-\varphi(x)] \}
 \subset \Dom(\cG), \\
c_{\lambda} &:=(1-\lambda)+\lambda c,
\end{align*}
and $c>0$ is a constant (depending on $l$) chosen later in \eqref{eq:c}.
Notice that, since $\ve \ll l$,
\[ \alpha-l-\ve \le t_{\lambda}-c_{\lambda}\ve \le r+\varphi(x) \le t_{\lambda} \le \alpha \]
for $(x,r) \in \Omega_{\lambda}$,
thus indeed $\Omega_{\lambda} \subset \Dom(\cG)$.
It follows from Lemma~\ref{lm:d^2} that the set
\[ \left\{ \big( \cG(x,s-\delta-\varphi(x)),\cG(x,t-c\delta-\varphi(x)) \big) \,\big|\,
 x \in \sigma(Z^{\alpha}_1),\, \delta \in [0,\ve] \right\} \subset \Gamma_{\varphi} \]
is $d^2$-cyclically monotone.
Therefore $(\mu_{\lambda})_{\lambda \in [0,1]}$ is a minimal geodesic with respect to $W_2$.
Moreover, by construction,
$\mu_{\lambda}=(T_{\lambda})_{\sharp}\mu_0$ holds with
\[ T_{\lambda}\big( \cG(x,s-\delta-\varphi(x)) \big)
 :=\cG\big( x,t_{\lambda}-c_{\lambda} \delta-\varphi(x) \big) \qquad
 \text{for}\ x \in \sigma(Z^{\alpha}_1),\, \delta \in [0,\ve]. \]
Let us rewrite $\mu_{\lambda}$ by using \eqref{eq:Lebd} as
\[ \mu_{\lambda} =\frac{\rho^{-1}}{\fv(\sigma(Z^{\alpha}_1)) \cdot c_{\lambda} \ve} \cdot
 \fm|_{\cG(\Omega_{\lambda})}. \]
Applying the curvature-dimension condition \eqref{eq:CD} or \eqref{eq:CDneg}
following from $\Ric_N \ge K$
to $(\mu_{\lambda})_{\lambda \in [0,1]}$, we obtain
\begin{align*}
&\bigg( \int_{\supp\mu_{\lambda}} (c_{\lambda}^{-1} \rho^{-1})^{(N-1)/N} \,d\fm \bigg)^N \\
&\ge \bigg( \int_{\supp \mu_0} \bm{\tau}_{K,N}^{(1-\lambda)}\big( d(z,T_1(z)) \big)
 \rho(z)^{(1-N)/N} \,\fm(dz) \\
&\qquad +\int_{\supp \mu_1} \bm{\tau}_{K,N}^{(\lambda)}\big( d(T_1^{-1}(z),z) \big)
 \big( c\rho(z) \big)^{(1-N)/N} \,\fm(dz) \bigg)^N \\
&= \bigg( \int_0^{\ve} \int_{\sigma(Z^{\alpha}_1)}
 \bm{\tau}_{K,N}^{(1-\lambda)}\big( l+(1-c)\delta \big)
 \rho\big( \cG(x,s-\delta-\varphi(x)) \big)^{1/N} \,\fv(dx) d\delta \\
&\qquad +\int_0^{c\ve} \int_{\sigma(Z^{\alpha}_1)}
 \bm{\tau}_{K,N}^{(\lambda)}\big( l+(c^{-1}-1)\delta \big)
 c^{(1-N)/N} \rho\big( \cG(x,t-\delta-\varphi(x)) \big)^{1/N} \,\fv(dx) d\delta \bigg)^N.
\end{align*}
The localization of this inequality (as $\delta \downarrow 0$) leads to,
for $\fv$-almost all $x \in \sigma(Z^{\alpha}_1)$,
\begin{align}
&c_{\lambda} \rho_{\gamma}\big( \cG(x,t_{\lambda}-\varphi(x)) \big) \nonumber\\
&\ge \left\{ \bm{\tau}_{K,N}^{(1-\lambda)}(l) \rho_{\gamma}\big( \cG(x,s-\varphi(x)) \big)^{1/N}
 +\bm{\tau}_{K,N}^{(\lambda)}(l) c^{1/N} \rho_{\gamma}\big( \cG(x,t-\varphi(x)) \big)^{1/N} \right\}^N.
 \label{eq:CDwbd}
\end{align}

Set $f(\lambda):=\rho_{\gamma}(\cG(x,t_{\lambda}-\varphi(x)))$ and
let us rewrite \eqref{eq:CDwbd} as
\[ f(\lambda) \ge
 \left\{ \bm{\tau}_{K,N}^{(1-\lambda)}(l) c_{\lambda}^{-1/N} f(0)^{1/N}
 +\bm{\tau}_{K,N}^{(\lambda)}(l) (c_{\lambda}/c)^{-1/N} f(1)^{1/N} \right\}^N. \]
By calculation, the RHS is maximized when
\begin{equation}\label{eq:c}
c=\left\{ \frac{(1-\lambda) \bm{\tau}^{(\lambda)}_{K,N}(l) f(1)^{1/N}}
 {\lambda \bm{\tau}^{(1-\lambda)}_{K,N}(l) f(0)^{1/N}} \right\}^{N/(N-1)}.
\end{equation}
On the one hand, substituting this into \eqref{eq:CDwbd} yields
\begin{align*}
c_{\lambda} f(\lambda)
&\ge \left\{ \bm{\sigma}^{(1-\lambda)}_{K/(N-1)}(l) f(0)^{1/(N-1)}
 +\bm{\sigma}^{(\lambda)}_{K/(N-1)}(l) f(1)^{1/(N-1)} \right\}^N \\
&\qquad \times \left\{ \frac{1-\lambda}{\bm{\tau}^{(1-\lambda)}_{K,N}(l) f(0)^{1/N}}
 \right\}^{N/(N-1)}.
\end{align*}
On the other hand, we find
\[ c_{\lambda}
 = \left\{ \bm{\sigma}^{(1-\lambda)}_{K/(N-1)}(l) f(0)^{1/(N-1)}
 +\bm{\sigma}^{(\lambda)}_{K/(N-1)}(l) f(1)^{1/(N-1)} \right\}
 \left\{ \frac{1-\lambda}{\bm{\tau}^{(1-\lambda)}_{K,N}(l) f(0)^{1/N}}
 \right\}^{N/(N-1)}. \]
Therefore we obtain
\[ f(\lambda)
 \ge \left\{ \bm{\sigma}^{(1-\lambda)}_{K/(N-1)}(l) f(0)^{1/(N-1)}
 +\bm{\sigma}^{(\lambda)}_{K/(N-1)}(l) f(1)^{1/(N-1)} \right\}^{N-1}. \]
This is exactly the desired inequality \eqref{eq:CDbd}
(recall that the domain $\Dom(\gamma)$ was taken so as to satisfy $\varphi(\gamma(r))=r$).
$\qedd$
\end{proof}

\begin{remark}\label{rm:CDbd}
(a)
Notice that choosing $c=1$ in \eqref{eq:CDwbd} gives only the weaker bound
\[ \rho_{\gamma}\big( (1-\lambda)s+\lambda t \big)
 \ge \left\{ \bm{\tau}_{K,N}^{(1-\lambda)}(t-s) \rho_{\gamma}(s)^{1/N}
 +\bm{\tau}_{K,N}^{(\lambda)}(t-s) \rho_{\gamma}(t)^{1/N} \right\}^N. \]
The above improving argument employing a shrinking ($c<1$) or expanding ($c>1$) transport
would be compared with a similar technique in the implication from $\CD(K,N)$ to $\Ric_N \ge K$
via the Brunn--Minkowski inequality
(recall the role of $a$ in the proof of Theorem~\ref{th:CD}).

(b)
If $\rho_{\gamma} \in \cC^2(\Dom(\gamma))$, then comparing \eqref{eq:CDbd} with
\begin{align*}
&\frac{\del^2}{\del \lambda^2} \left[ \bm{\sigma}_{K/(N-1)}^{(1-\lambda)}(t-s) \rho_{\gamma}(s)^{1/(N-1)}
 +\bm{\sigma}_{K/(N-1)}^{(\lambda)}(t-s) \rho_{\gamma}(t)^{1/(N-1)} \right] \\
&= -\frac{K(t-s)^2}{N-1} \left[ \bm{\sigma}_{K/(N-1)}^{(1-\lambda)}(t-s) \rho_{\gamma}(s)^{1/(N-1)}
 +\bm{\sigma}_{K/(N-1)}^{(\lambda)}(t-s) \rho_{\gamma}(t)^{1/(N-1)} \right]
\end{align*}
shows that
\begin{align*}
\left( \rho_{\gamma}^{1/(N-1)} \right)'' +\frac{K}{N-1} \rho_{\gamma}^{1/(N-1)}
&\le 0 \qquad \text{for}\ N \in [n,\infty), \\
\left( \rho_{\gamma}^{1/(N-1)} \right)'' +\frac{K}{N-1} \rho_{\gamma}^{1/(N-1)}
&\ge 0 \qquad \text{for}\ N \in (-\infty,0).
\end{align*}
This is equivalent to, in either case,
\[ \psi'' -\frac{(\psi')^2}{N-1} \ge K, \qquad
 \text{where}\ \rho_{\gamma}=\e^{-\psi}. \]
Hence $(\Dom(\gamma),|\cdot|,\bm{\mu}_{\gamma})$ satisfies $\Ric_N \ge K$.
We stress that the standard distance $|\cdot|$ cannot be replaced with
the distance function induced from the Finsler structure $F$,
since we do not know anything about the behavior of the reverse curve of $\gamma$
(that is not necessarily a geodesic nor of constant speed with respect to $F$).
\end{remark}

\begin{theorem}[$N=0$ case]
Suppose that $(M,F,\fm)$ satisfies $\Ric_0 \ge K$ for some $K \in \R$.
Then, along $\fv$-almost every $\gamma \in \bm{R}_{\varphi}$, we have
\begin{equation}\label{eq:CD0bd}
\rho_{\gamma}\big( (1-\lambda)s+\lambda t \big)
 \ge \Big\{ \bm{\sigma}_{-K}^{(1-\lambda)}(t-s) \rho_{\gamma}(s)^{-1}
 +\bm{\sigma}_{-K}^{(\lambda)}(t-s) \rho_{\gamma}(t)^{-1} \Big\}^{-1}
\end{equation}
for all $a<s<t<b$ with $a,b \in \Dom(\gamma)$ and $\lambda \in (0,1)$.
\end{theorem}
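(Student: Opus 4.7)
The argument parallels the proof of Theorem~\ref{th:CDbd}, with the max-form curvature-dimension condition \eqref{eq:CD0} replacing the integral inequalities \eqref{eq:CD}/\eqref{eq:CDneg}, and then the same \textquotedblleft shrinking/expanding transport\textquotedblright\ optimization in the free parameter $c>0$ recovers the sharp harmonic-mean-type bound.

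The plan is as follows. Fix a compact set $Z \subset M$, a level $\alpha \in \R$, and parameters $l \gg \ve > 0$ as in the proof of Theorem~\ref{th:CDbd}, put $s:=\alpha-l$, $t:=\alpha$, $t_{\lambda}:=(1-\lambda)s+\lambda t$, and for each $c>0$ set $c_\lambda := (1-\lambda)+\lambda c$ and construct the probability measures
\[
\mu_\lambda := \frac{1}{\fv(\sigma(Z^{\alpha}_1)) \cdot c_\lambda \ve}
 \cdot \cG_{\sharp}\bigl[(\fv \times \fL^1)|_{\Omega_\lambda}\bigr],
\]
with $\Omega_\lambda$ and the transport map $T_\lambda(\cG(x,s-\delta-\varphi(x))) := \cG(x, t_\lambda-c_\lambda\delta-\varphi(x))$ exactly as before. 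By Lemma~\ref{lm:d^2} and Theorem~\ref{th:cymo}, $(\mu_\lambda)_{\lambda\in[0,1]}$ is a minimal $W_2$-geodesic, and the density of $\mu_\lambda$ with respect to $\fm$ is $(\fv(\sigma(Z^{\alpha}_1))\, c_\lambda \ve)^{-1} \rho^{-1}$ on $\cG(\Omega_\lambda)$, where $\rho$ is the Radon--Nikodym derivative from \eqref{eq:Lebd} and equals $\rho_\gamma\circ\varphi$ along each ray $\gamma$.

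Next, applying \eqref{eq:CD0} to $(\mu_\lambda)$ and cancelling the common normalizing factor $\fv(\sigma(Z^\alpha_1))\ve$, the distance along the coupling $d(z_0,z_1)=l+(1-c)\delta$ gives
\[
\esssup \frac{\rho_{\gamma}(t_\lambda - c_\lambda \delta)^{-1}}{c_\lambda}
 \le \max\!\left\{ \esssup \frac{\bs_{-K}((1-\lambda)(l+(1-c)\delta))}{(1-\lambda)\bs_{-K}(l+(1-c)\delta)} \rho_\gamma(s-\delta)^{-1},\ \esssup \frac{\bs_{-K}(\lambda(l+(1-c)\delta))}{\lambda \bs_{-K}(l+(1-c)\delta)}\, \frac{\rho_\gamma(t-c\delta)^{-1}}{c} \right\},
\]
the essential suprema being taken over $(x,\delta)$ in the support of the optimal coupling. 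Using the continuity of $\rho_\gamma$ (Proposition~\ref{pr:ac}(iii)), Lemma~\ref{lm:Rconv} to reduce to a ray-by-ray analysis, and shrinking $Z$ around a fixed $\gamma_0 \in \bm{R}_\varphi$ while letting $\delta \downarrow 0$, the essential suprema converge to the corresponding pointwise values, yielding for $\fv$-a.e. $\gamma$:
\[
\frac{\rho_\gamma(t_\lambda)^{-1}}{c_\lambda} \le \max\!\left\{ \frac{\bs_{-K}((1-\lambda)l)}{(1-\lambda)\bs_{-K}(l)} \rho_\gamma(s)^{-1},\ \frac{\bs_{-K}(\lambda l)}{\lambda \bs_{-K}(l)}\, \frac{\rho_\gamma(t)^{-1}}{c} \right\}.
\]

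Finally, we optimize over $c>0$ to extract the sharp inequality. After multiplication by $c_\lambda$, the first entry in the max is linearly increasing in $c$ while the second is of the form $(\text{const})(1+(1-\lambda)/(\lambda c))$, hence decreasing in $c$; thus the minimum of $c_\lambda\cdot\max\{\cdot,\cdot\}$ over $c$ is attained at the intersection. A direct calculation at the intersecting value $c=(1-\lambda)\bm{\sigma}_{-K}^{(\lambda)}(l)\rho_\gamma(s)/(\lambda\bm{\sigma}_{-K}^{(1-\lambda)}(l)\rho_\gamma(t))$ gives the common value $\bm{\sigma}_{-K}^{(1-\lambda)}(l)\rho_\gamma(s)^{-1}+\bm{\sigma}_{-K}^{(\lambda)}(l)\rho_\gamma(t)^{-1}$, whence
\[
\rho_\gamma(t_\lambda)^{-1} \le \bm{\sigma}_{-K}^{(1-\lambda)}(l)\rho_\gamma(s)^{-1}+\bm{\sigma}_{-K}^{(\lambda)}(l)\rho_\gamma(t)^{-1},
\]
which is exactly \eqref{eq:CD0bd} after taking reciprocals and recalling $l=t-s$. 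The main technical obstacle is the localization step: unlike the integral bounds \eqref{eq:CD},\eqref{eq:CDneg}, whose Lebesgue-differentiation is standard, the $\esssup$ on both sides of \eqref{eq:CD0} requires one to control the concentration of the coupling as $Z$ shrinks toward a single ray---this is where the continuity of $\rho_\gamma$ and the ray-by-ray reduction via Lemma~\ref{lm:Rconv} become indispensable, and is the analogue, in this essentially $L^\infty$-setting, of the Lebesgue point argument used for $N\neq 0$.
$\qedd$
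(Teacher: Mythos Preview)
Your proof is correct and follows essentially the same route as the paper's: construct the one-parameter family $(\mu_\lambda)$ with the free dilation parameter $c$, apply the $\CD(K,0)$ inequality \eqref{eq:CD0}, localize to a single ray, and then choose $c$ so that the two entries of the $\max$ coincide---your optimal value of $c$ and the resulting bound match the paper's exactly. The only difference is expository: the paper records the localized inequality
\[
c_\lambda^{-1} f(\lambda)^{-1} \le \max\bigg\{ \frac{\bm{\sigma}_{-K}^{(1-\lambda)}(l)}{1-\lambda} f(0)^{-1},\ \frac{\bm{\sigma}_{-K}^{(\lambda)}(l)}{c\lambda} f(1)^{-1} \bigg\}
\]
immediately by reference to the proof of Theorem~\ref{th:CDbd}, whereas you spell out why the passage from the $\esssup$ inequality \eqref{eq:CD0} to this per-ray statement requires shrinking $Z$ and invoking the continuity of $\rho_\gamma$ rather than a Lebesgue-point argument. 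That extra care is appropriate, though not strictly a different method.
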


\begin{proof}
The proof is a modification of that of Theorem~\ref{th:CDbd}.
Instead of \eqref{eq:CDwbd}, we obtain
\[ c_{\lambda}^{-1} f(\lambda)^{-1}
 \le \max\bigg\{ \frac{\bm{\sigma}_{-K}^{(1-\lambda)}(l)}{1-\lambda} f(0)^{-1},
 \frac{\bm{\sigma}_{-K}^{(\lambda)}(l)}{c\lambda} f(1)^{-1} \bigg\}, \]
where $f(\lambda):=\rho_{\gamma}(\cG(x,t_{\lambda}-\varphi(x)))$ as before.
Choosing
\[ c=\frac{(1-\lambda) \bm{\sigma}_{-K}^{(\lambda)}(l) f(0)}
 {\lambda \bm{\sigma}_{-K}^{(1-\lambda)}(l) f(1)}, \]
we have
\[ c_{\lambda}^{-1} f(\lambda)^{-1} \le
 \frac{\bm{\sigma}_{-K}^{(1-\lambda)}(l)}{1-\lambda} f(0)^{-1}. \]
Note also that
\[ c_{\lambda} =\big\{ \bm{\sigma}_{-K}^{(1-\lambda)}(l)f(0)^{-1}
 +\bm{\sigma}_{-K}^{(\lambda)}(l)f(1)^{-1} \big\}
 \cdot \frac{(1-\lambda)f(0)}{\bm{\sigma}_{-K}^{(1-\lambda)}(l)}. \]
Thus we obtain
\[ f(\lambda) \ge
 c_{\lambda}^{-1} \frac{(1-\lambda)f(0)}{\bm{\sigma}_{-K}^{(1-\lambda)}(l)}
 =\big\{ \bm{\sigma}_{-K}^{(1-\lambda)}(l)f(0)^{-1}
 +\bm{\sigma}_{-K}^{(\lambda)}(l)f(1)^{-1} \big\}^{-1}. \]
$\qedd$
\end{proof}

We remark that the estimate \eqref{eq:CD0bd}
is indeed the limit of \eqref{eq:CDbd} as $N \uparrow 0$.
The case of $N=\infty$ is simpler and goes as follows.

\begin{theorem}[$N=\infty$ case]\label{th:CDbd'}
Suppose that $(M,F,\fm)$ satisfies $\Ric_{\infty} \ge K$ for some $K \in \R$.
Then, along $\fv$-almost every $\gamma \in \bm{R}_{\varphi}$, we have
\begin{equation}\label{eq:CDbd'}
 \log\rho_{\gamma}\big( (1-\lambda)s+\lambda t \big)
 \ge (1-\lambda) \log\rho_{\gamma}(s) +\lambda \log\rho_{\gamma}(t)
 +\frac{K}{2}(1-\lambda)\lambda (t-s)^2
\end{equation}
for all $a<s<t<b$ with $a,b \in \Dom(\gamma)$ and $\lambda \in (0,1)$.
\end{theorem}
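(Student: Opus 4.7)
The strategy mirrors that of Theorem~\ref{th:CDbd}: construct a family of probability measures $(\mu_\lambda)_{\lambda\in[0,1]}$ that is a $W_2$-geodesic along the gradient flow of $\varphi$, apply the curvature-dimension condition $\CD(K,\infty)$, and localize to produce a pointwise inequality on $\fv$-almost every transport ray. The $N=\infty$ case is in fact simpler, since the shrinking/expanding parameter $c$ used in Theorem~\ref{th:CDbd} has no role here (there is no dimensional parameter to optimize over), and we may take $c=1$ throughout.

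Fix a compact $Z\subset M$, $\alpha\in\R$, and $l\gg\varepsilon>0$ satisfying $\fv(\sigma(Z^\alpha_1))>0$ and $\sup_{Z^\alpha_0}\varphi\le\alpha-l-\varepsilon$; set $s:=\alpha-l$, $t:=\alpha$, $t_\lambda:=(1-\lambda)s+\lambda t$, and $A:=\sigma(Z^\alpha_1)$. Define
\[
\mu_\lambda := \frac{1}{\fv(A)\,\varepsilon}\cG_\sharp\bigl[(\fv\times\fL^1)|_{\Omega_\lambda}\bigr],\quad \Omega_\lambda := \bigl\{(x,r) : x\in A,\ r\in[t_\lambda-\varepsilon-\varphi(x),\,t_\lambda-\varphi(x)]\bigr\}.
\]
Lemma~\ref{lm:d^2} ensures $(\mu_\lambda)_{\lambda\in[0,1]}$ is the unique $W_2$-geodesic between $\mu_0$ and $\mu_1$, with $W_2(\mu_0,\mu_1)=l$. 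Using \eqref{eq:Lebd} and the change of variables $y=\cG(x,r)$, the density of $\mu_\lambda$ against $\fm$ on $\cG(\Omega_\lambda)$ equals $1/(\fv(A)\varepsilon\,\rho_\gamma(\varphi(x)+r))$, whence a short computation yields
\[
\Ent_\fm(\mu_\lambda)=-\log\bigl(\fv(A)\varepsilon\bigr)-\frac{1}{\fv(A)\,\varepsilon}\int_A\int_{t_\lambda-\varepsilon}^{t_\lambda}\log\rho_\gamma(\tau)\,d\tau\,\fv(d\gamma).
\]

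Applying $\CD(K,\infty)$ to $(\mu_\lambda)$ and cancelling the common $-\log(\fv(A)\varepsilon)$ terms gives
\[
\frac{1}{\fv(A)\,\varepsilon}\int_A\int_{t_\lambda-\varepsilon}^{t_\lambda}\log\rho_\gamma \geq \frac{1-\lambda}{\fv(A)\,\varepsilon}\int_A\int_{s-\varepsilon}^s\log\rho_\gamma + \frac{\lambda}{\fv(A)\,\varepsilon}\int_A\int_{t-\varepsilon}^t\log\rho_\gamma + \frac{K}{2}(1-\lambda)\lambda l^2.
\]
Exhausting $\Dom(\cG)$ by countably many compact rectangles $\sigma(Z_i)\times[a_i,b_i]$ via Lemma~\ref{lm:Rconv}, and applying a Lebesgue differentiation argument for $\fv$ (letting $A$ shrink around a typical $\gamma$), one extracts the pointwise inequality for $\fv$-almost every $\gamma\in\bm{R}_\varphi$ and almost every triple $s<t_\lambda<t$ in the relevant interval. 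Sending $\varepsilon\downarrow 0$ and invoking the continuity of $\rho_\gamma$ from Proposition~\ref{pr:ac}(iii) produces \eqref{eq:CDbd'} at the specific triple; a standard countable-density argument then extends it to all $a<s<t<b$ in the interior of $\Dom(\gamma)$ and all $\lambda\in(0,1)$.

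The principal technical point is the localization step, namely the passage from the $A$-averaged inequality to a pointwise inequality on $\fv$-almost every ray; this requires a careful use of Lemma~\ref{lm:Rconv} combined with Lebesgue differentiation for $\fv$. Secondary care is needed to ensure $\Ent_\fm(\mu_\lambda)<\infty$ so that $\CD(K,\infty)$ genuinely applies, which can be arranged by restricting $Z$ so that $\rho_\gamma$ is bounded above and below away from $0$ on the intervals appearing in the construction, making the density of $\mu_\lambda$ with respect to $\fm$ bounded.
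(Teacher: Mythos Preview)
Your proposal is correct and follows essentially the same approach as the paper's own proof: set $c=1$ in the construction from Theorem~\ref{th:CDbd}, apply $\CD(K,\infty)$ to the resulting $W_2$-geodesic $(\mu_\lambda)$, and localize the averaged entropy inequality to obtain \eqref{eq:CDbd'} along $\fv$-almost every ray. Your write-up is slightly more explicit about the entropy computation and the finiteness check, but the argument is the same.
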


\begin{proof}
Arguing as in the proof of Theorem~\ref{th:CDbd} with $c=1$,
we have by $\Ric_{\infty} \ge K$
\begin{align*}
&\int_{\supp\mu_{\lambda}} \rho^{-1} \log(\rho^{-1}) \,d\fm \\
&\le (1-\lambda) \int_0^{\ve} \int_{\sigma(Z^{\alpha}_1)}
 \log\big[ \rho\big( \cG(x,s-\delta-\varphi(x)) \big)^{-1} \big] \,\fv(dx) d\delta \\
&\quad +\lambda \int_0^{\ve} \int_{\sigma(Z^{\alpha}_1)}
 \log\big[ \rho\big( \cG(x,t-\delta-\varphi(x)) \big)^{-1} \big] \,\fv(dx) d\delta \\
&\quad -\fv\big( \sigma(Z^{\alpha}_1) \big) \ve \cdot
 \frac{K}{2}(1-\lambda)\lambda W_2^2(\mu_0,\mu_1).
\end{align*}
The localization gives
\begin{align*}
&\log\rho_{\gamma}\big( \cG(x,t_{\lambda}-\varphi(x)) \big) \\
&\ge (1-\lambda) \log\rho_{\gamma}\big( \cG(x,s-\varphi(x)) \big)
 +\lambda \log\rho_{\gamma}\big( \cG(x,t-\varphi(x)) \big)
 +\frac{K}{2}(1-\lambda)\lambda l^2
\end{align*}
which shows \eqref{eq:CDbd'}.
$\qedd$
\end{proof}

\begin{remark}\label{rm:CDbd'}
Similarly to Remark~\ref{rm:CDbd}(b), if $\rho_{\gamma} \in \cC^2(\Dom(\gamma))$,
then the inequalities \eqref{eq:CD0bd} and \eqref{eq:CDbd'} imply
$\psi''+(\psi')^2 \ge K$ and $\psi'' \ge K$, respectively.
\end{remark}

One can deduce \eqref{eq:CDbd'} also from \eqref{eq:CDbd} as $N \downarrow -\infty$.
Indeed, $\Ric_N \ge \Ric_{\infty} \ge K$ for $N<0$ and, by putting $\ve=-1/(N-1)$,
\begin{align*}
&(N-1) \log\left[ \bm{\sigma}^{(1-\lambda)}_{K/(N-1)}(l) a^{1/(N-1)}
 +\bm{\sigma}^{(\lambda)}_{K/(N-1)}(l) b^{1/(N-1)} \right] \\
&= -\frac{1}{\ve} \log \left[ \frac{\bs_{-K\ve}((1-\lambda)l)}{\bs_{-K\ve}(l)} a^{-\ve}
 +\frac{\bs_{-K\ve}(\lambda l)}{\bs_{-K\ve}(l)} b^{-\ve} \right] \\
&\to (1-\lambda) \log a +\lambda \log b +\frac{K}{2}(1-\lambda)\lambda l^2
\end{align*}
as $\ve \downarrow 0$ for $a,b>0$.

\section{Isoperimetric inequalities}\label{sc:isop}

We prove in this section the isoperimetric inequality (Theorem~\ref{th:isop}).
We follow the argument in \cite[\S 6]{CM}, however,
the non-reversibility finally makes a difference.

\subsection{Proof of Theorem~\ref{th:isop}}\label{ssc:isop}

The strategy is to reduce the isoperimetric inequality \eqref{eq:Fisop}
to the one-dimensional isoperimetric inequalities on needles
$(\Dom(\gamma),F,\bm{\mu}_{\gamma})$.
Then, since the density function $\rho_{\gamma}$ of $\bm{\mu}_{\gamma}$
is only locally Lipschitz, we approximate $\rho_{\gamma}$ by smooth functions
and apply the isoperimetric inequality in \cite{Misharp,Mineg}.
Before discussing the general situation, let us consider the special case of $n=N=1$.

\begin{step}[The case of $n=N=1$]
In this extremal case, only $K=0$ is meaningful and $D<\infty$ is necessary for $\fm(M)=1$
(recall Remark~\ref{rm:wRic}(c)).
Thus it is sufficient to consider the case where
$(M,F)$ is diffeomorphic to the circle $\Sph^1=\R/\Z$ with
$F(\del/\del x) \equiv D$ and $F({-}\del/\del x) \equiv \Lambda_{(M,F)}^{-1} \cdot D$,
and $\fm$ coincides with the standard (Hausdorff) measure $dx$.
Therefore we have, for all $\theta \in (0,1)$,
\[ \cI_{(M,F,\fm)}(\theta) =\frac{1}{D}+\frac{1}{\Lambda_{(M,F)}^{-1} \cdot D}
 =\frac{1+\Lambda_{(M,F)}}{D}. \]
This bound is better than $\cI_{0,1,D}(\theta)=D^{-1}$
for the reason that $\del M=\emptyset$ is assumed.
If one admits manifolds with boundary, then for $M=[0,1]$
we indeed have $\cI(\theta)=D^{-1}$.
\end{step}

\begin{step}[Reduction to needles]
Assume $N \neq 1$ from here on.
Since $\cI_{(M,F,\fm)}(\theta)=\cI_{K,N,D}(\theta)$ clearly holds
for $\theta=0,1$ (recall \eqref{eq:I} for the definition of $\cI_{(M,F,\fm)}(\theta)$),
we fix $\theta \in (0,1)$.
Given a Borel set $A \subset M$ with $\fm(A)=\theta$,
we consider the function
\[ f(x):=\chi_A(x)-\theta, \qquad x \in M. \]
Clearly $\int_M f \,d\fm=0$ holds and hence the argument in \S \ref{sc:mean0} applies.
Let $\varphi:M \lra \R$ be a $1$-Lipschitz function given by Lemma~\ref{lm:MK}.
Since $f \neq 0$ on whole $M$,
we have $\fm(\bm{D}_{\varphi})=0$ by Proposition~\ref{pr:mean0}.
It also follows from Proposition~\ref{pr:mean0} that,
for $\fv$-almost all $\gamma \in \bm{R}_{\varphi}$,
\[ 0=\int_{\Dom(\gamma)} f \circ \gamma \,d\bm{\mu}_{\gamma}
 =\bm{\mu}_{\gamma} \big( \gamma^{-1}(A) \big) -\theta. \]
Hence we have $\bm{\mu}_{\gamma}(\gamma^{-1}(A)) =\theta$
and, by the definition of the isoperimetric profile,
\[ \bm{\mu}_{\gamma}^+ \big( \gamma^{-1}(A) \big)
 \ge \cI_{(\Dom(\gamma),F_{\gamma},\bm{\mu}_{\gamma})}(\theta), \]
where $F_{\gamma}$ denotes the Finsler structure induced from $F|_{\Image(\gamma)}$
by identifying $\Dom(\gamma)$ and $\Image(\gamma)$.
We shall see in the following steps that
\begin{equation}\label{eq:1D}
\cI_{(\Dom(\gamma),F_{\gamma},\bm{\mu}_{\gamma})}(\theta)
 \ge \Lambda_{(M,F)}^{-1} \cdot \cI_{K,N,D}(\theta).
\end{equation}
It follows from Fubini's theorem and Fatou's lemma that
\begin{align*}
\fm^+(A) &= \liminf_{\ve \downarrow 0} \frac{\fm(B^+(A,\ve))-\fm(A)}{\ve} \\
&= \liminf_{\ve \downarrow 0} \int_{\bm{R}_{\varphi}}
 \frac{\bm{\mu}_{\gamma}(\gamma^{-1}(B^+(A,\ve)))
 -\bm{\mu}_{\gamma}(\gamma^{-1}(A))}{\ve} \,\fv(d\gamma) \\
&\ge \int_{\bm{R}_{\varphi}} \liminf_{\ve \downarrow 0}
 \frac{\bm{\mu}_{\gamma}(\gamma^{-1}(B^+(A,\ve)))
 -\bm{\mu}_{\gamma}(\gamma^{-1}(A))}{\ve} \,\fv(d\gamma).
\end{align*}
Notice that $\gamma^{-1}(B^+(A,\ve)) \supset B^+(\gamma^{-1}(A),\ve)$,
where $B^+(\gamma^{-1}(A),\ve)$ is the $\ve$-neighborhood in $\Dom(\gamma)$
with respect to $F_{\gamma}$.
Assuming \eqref{eq:1D}, we obtain
\[ \liminf_{\ve \downarrow 0}
 \frac{\bm{\mu}_{\gamma}(B^+(\gamma^{-1}(A),\ve))
 -\bm{\mu}_{\gamma}(\gamma^{-1}(A))}{\ve}
 =\bm{\mu}_{\gamma}^+ \big( \gamma^{-1}(A) \big)
 \ge \Lambda_{(M,F)}^{-1} \cdot \cI_{K,N,D}(\theta). \]
Therefore we conclude that the desired isoperimetric inequality
\[ \fm^+(A) \ge \Lambda_{(M,F)}^{-1} \cdot \cI_{K,N,D}(\theta) \]
holds.
There only remains to show the one-dimensional inequality \eqref{eq:1D}.
\end{step}

\begin{step}[Reduction to smooth densities]
We next reduce \eqref{eq:1D} to smooth density functions similarly to \cite{CM},
in order to apply the smooth arguments in \cite{Misharp,Mineg}.
Precisely, we apply the following lemma (borrowed from \cite[Lemma~6.2]{CM})
to the density function $\rho_{\gamma}:\Dom(\gamma) \lra [0,\infty)$
of $\bm{\mu}_{\gamma}$.
As a mollifier, let us fix an arbitrary non-negative function $\phi \in \cC^{\infty}(\R)$
with $\supp\phi \subset [0,1]$ and $\int_{\R} \phi \,d\fL^1=1$.

\begin{lemma}\label{lm:smooth}
Let $I_D:=[0,D]$ for $D \in (0,\infty)$ and $I_{\infty}:=\R$,
and take a non-negative continuous function $\rho:\R \lra [0,\infty)$
with $\int_{\R} \rho \,dt=1$ and $\supp\rho \subset I_D$.
For $N \in (-\infty,0] \cup [n,\infty)$, $\ve>0$ and $t \in \R$,
put $\phi_{\ve}(t):=\phi(t/\ve)/\ve$ and consider
\[ \rho_{N,\ve}(t):=(\rho^{1/(N-1)} * \phi_{\ve})(t)^{N-1}
 =\bigg( \int_{\R} \rho(t-s)^{1/(N-1)} \phi_{\ve}(s) \,ds \bigg)^{N-1},
 \quad t \in \R, \]
and
\[ \rho_{\infty,\ve}(t) :=\exp\big[ (\log\rho * \phi_{\ve})(t) \big]
 =\exp\bigg[ \int_{\R} \log\big( \rho(t-s) \big) \phi_{\ve}(s) \,ds \bigg], \quad t \in \R. \]
Then we have the following.
\begin{enumerate}[{\rm (i)}]
\item
$\rho_{N,\ve} \in \cC^{\infty}(\R)$ with $\supp\rho_{\ve} \subset I_{D+\ve}$,
and $\rho_{N,\ve}$ converges to $\rho$ uniformly on each bounded interval as $\ve \downarrow 0$.
If $D=\infty$, we also have $\rho_{N,\ve} \to \rho$ in $L^1$.

\item
If $\rho$ satisfies \eqref{eq:CDbd}
$($or \eqref{eq:CD0bd} if $N=0$, \eqref{eq:CDbd'} if $N=\infty)$,
then $\rho_{N,\ve}$ satisfies
\begin{equation}\label{eq:psiNe}
\psi_{N,\ve}'' -\frac{(\psi_{N,\ve}')^2}{N-1} \ge K,
 \qquad \text{where}\ \rho_{N,\ve}=\e^{-\psi_{N,\ve}}.
\end{equation}
\end{enumerate}
\end{lemma}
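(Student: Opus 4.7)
Part~(i) is essentially standard mollification theory. For $N \in (-\infty,0] \cup [n,\infty)$, the function $\rho^{1/(N-1)}$ is continuous on $\{\rho > 0\}$, and its convolution with the smooth compactly supported $\phi_{\ve}$ is smooth, so $\rho_{N,\ve} = (\rho^{1/(N-1)} * \phi_{\ve})^{N-1}$ lies in $\cC^{\infty}$. The support relation $\supp \rho_{N,\ve} \subset I_{D+\ve}$ follows from $\supp \phi_{\ve} \subset [0,\ve]$ together with the additive law $\supp(f*g) \subset \supp f + \supp g$. Uniform convergence on each bounded interval as $\ve \downarrow 0$ follows from uniform continuity of $\rho^{1/(N-1)}$ (respectively $\log \rho$ when $N = \infty$) on compact subsets of $\{\rho > 0\}$, while $L^{1}$-convergence when $D = \infty$ is routine.

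Part~(ii) is the substantive point, and the plan is to exploit the fact that the curvature-dimension inequalities \eqref{eq:CDbd}, \eqref{eq:CD0bd} and \eqref{eq:CDbd'} are \emph{translation-invariant}. Taking the $(N-1)$-th root (which preserves the direction of the inequality for $N \in [n,\infty)$ and reverses it for $N \in (-\infty,0)$), \eqref{eq:CDbd} becomes a weighted concavity/convexity statement for $\rho^{1/(N-1)}$ whose coefficients $\bm{\sigma}_{K/(N-1)}^{(1-\lambda)}(t-s)$ and $\bm{\sigma}_{K/(N-1)}^{(\lambda)}(t-s)$ depend only on $t-s$ and $\lambda$, not on $s$ and $t$ individually. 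Substituting $s - u$ and $t - u$ in place of $s$ and $t$ therefore preserves the inequality with the same weights. Integrating the resulting family of inequalities against the non-negative measure $\phi_{\ve}(u)\,du$ then produces exactly the same inequality for $\rho^{1/(N-1)} * \phi_{\ve} = \rho_{N,\ve}^{1/(N-1)}$, hence for $\rho_{N,\ve}$. The cases $N = 0$ (via $\rho^{-1}$ in \eqref{eq:CD0bd}) and $N = \infty$ (via $\log \rho$ in \eqref{eq:CDbd'}, which is already linear in $\log \rho$) work by the same translation-and-integrate strategy.

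With the integrated curvature-dimension inequality established for the smooth density $\rho_{N,\ve}$, the final step is to pass to the differential form \eqref{eq:psiNe}. Writing $\rho_{N,\ve} = \e^{-\psi_{N,\ve}}$, I would fix $t_{0}$ and Taylor-expand the integrated inequality as $s,t \to t_{0}$ with $\lambda$ fixed; the second-order coefficient in $(t-s)$ collapses to precisely $\psi_{N,\ve}'' - (\psi_{N,\ve}')^{2}/(N-1) \geq K$, as already observed in Remark~\ref{rm:CDbd}(b) (and Remark~\ref{rm:CDbd'} for $N = \infty$).

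The main obstacle is bookkeeping rather than substance: correctly tracking the direction of inequalities as one raises to powers whose sign depends on the range of $N$, and handling the boundary of $\{\rho > 0\}$ (where $\rho^{1/(N-1)}$ may blow up for $N < 1$). Since $\phi_{\ve}$ has support in $[0,\ve]$ and the curvature-dimension inequality is only meaningful when both $s,t \in \supp \rho$, these boundary questions can be controlled by working on $(0,D) + [0,\ve]$ and taking endpoint limits, so they do not disrupt the main translation-invariance argument.
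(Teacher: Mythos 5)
Your proposal is correct and follows essentially the same route as the paper: part (i) is treated as standard mollification (the paper additionally uses Jensen's inequality to dominate $\rho_{N,\ve}$ by $\rho*\phi_{\ve}$ for the $L^1$ statement when $D=\infty$), and part (ii) is exactly the translation-invariance-plus-integration argument applied to $\rho^{1/(N-1)}$ (resp.\ $\rho^{-1}$, $\log\rho$), followed by the passage to the differential inequality \eqref{eq:psiNe} via Remark~\ref{rm:CDbd}(b). The sign bookkeeping you flag is indeed the only delicate point and works out as you describe.
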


\begin{proof}
Since the proofs are common, we consider only $N \in (-\infty,0] \cup [n,\infty)$.

(i)
It is a standard fact that $\rho_{N,\ve}^{1/(N-1)}$ converges to $\rho^{1/(N-1)}$
uniformly on each bounded interval, then the first assertion follows.
When $D=\infty$, observe from Jensen's inequality that
\[ \rho_{N,\ve}(t) \le \int_{\R} \rho(t-s) \phi_{\ve}(s) \,ds =: \rho_{\ve}(t). \]
Hence $-\rho \le \rho_{N,\ve}-\rho \le \rho_{\ve}-\rho$,
then the dominated convergence theorem and
$\rho_{\ve} \to \rho$ in $L^1$ show that $\rho_{N,\ve} \to \rho$ in $L^1$.

(ii)
We first remark that $\supp\rho$ is convex (thus a closed interval) by
\eqref{eq:CDbd} or \eqref{eq:CD0bd}.
Take $a<t_0<t_1<b$ with $a,b \in \supp\rho$ and put $l:=t_1 -t_0$.
Then we observe for $\lambda \in (0,1)$
\begin{align*}
&\rho_{N,\ve} \big( (1-\lambda) t_0+\lambda t_1 \big)
 =(\rho^{1/(N-1)} * \phi_{\ve}) \big( (1-\lambda) t_0+\lambda t_1 \big)^{N-1} \\
&= \bigg( \int_{\R} \rho \big( (1-\lambda)(t_0 -s)+\lambda (t_1 -s) \big)^{1/(N-1)}
 \phi_{\ve}(s) \,ds \bigg)^{N-1} \\
&\ge \bigg( \int_{\R} \Big\{ \bm{\sigma}^{(1-\lambda)}_{K/(N-1)}(l) \rho(t_0 -s)^{1/(N-1)}
 +\bm{\sigma}^{(\lambda)}_{K/(N-1)}(l) \rho(t_1 -s)^{1/(N-1)} \Big\} \phi_{\ve}(s) \,ds \bigg)^{N-1} \\
&= \Big\{ \bm{\sigma}^{(1-\lambda)}_{K/(N-1)}(l) \rho_{N,\ve}(t_0)^{1/(N-1)}
 +\bm{\sigma}^{(\lambda)}_{K/(N-1)}(l) \rho_{N,\ve}(t_1)^{1/(N-1)} \Big\}^{N-1}.
\end{align*}
This concavity inequality implies \eqref{eq:psiNe} as we discussed in Remark~\ref{rm:CDbd}(b)
(and Remark~\ref{rm:CDbd'} for $N=0,\infty$).
$\qedd$
\end{proof}

When we apply Lemma~\ref{lm:smooth} to $\rho=\rho_{\gamma}$,
the resulting smooth function needs to be normalized since
$\int_{\R} \rho_{N,\ve} \,dt =:m_{N,\ve}$ may not be $1$.
Notice that $\rho_{N,\ve}/m_{N,\ve}$ still enjoys \eqref{eq:psiNe}
since $\rho_{N,\ve}/m_{N,\ve}=\e^{-\psi_{N,\ve}-\log m_{N,\ve}}$.
Thus it follows from Lemma~\ref{lm:smooth}
and the isoperimetric inequality \eqref{eq:M-isop} in \cite{Misharp,Mineg} that
\[ \cI_{(\Dom(\gamma),|\cdot|,\bm{\mu}_{\gamma})}(\theta)
 \ge \lim_{\ve \downarrow 0} \cI_{K,N,D+\ve}(\theta)
 =\cI_{K,N,D}(\theta), \]
where the latter equality holds
thanks to the precise formula of $\cI_{K,N,D}(\theta)$ in \cite{Misharp,Mineg}.
(We remark that, to be precise, when $\Dom(\gamma)$ is a half-infinite interval
we use a variant of Lemma~\ref{lm:smooth} on $[0,\infty)$ or $(-\infty,0]$.)
\end{step}

\begin{step}[Proof of \eqref{eq:1D}]\label{step3}
We finally need to compare $F$ and $|\cdot|$ along $\gamma$,
this is the only difference between the reversible and non-reversible cases.
For $s,t \in \Dom(\gamma)$ with $s<t$, we observe by the definition of the Finsler distance
and the reversibility constant $\Lambda_{(M,F)}$ that
\[ d\big( \gamma(s),\gamma(t) \big)=t-s, \qquad
 d\big( \gamma(t),\gamma(s) \big) \le \Lambda_{(M,F)} \cdot (t-s). \]
Therefore we have \eqref{eq:1D} as
\[ \cI_{(\Dom(\gamma),F_{\gamma},\bm{\mu}_{\gamma})}(\theta)
 \ge \Lambda_{(M,F)}^{-1} \cdot \cI_{(\Dom(\gamma),|\cdot|,\bm{\mu}_{\gamma})}(\theta)
 \ge \Lambda_{(M,F)}^{-1} \cdot \cI_{K,N,D}(\theta), \]
and complete the proof of Theorem~\ref{th:isop}.
$\qedd$
\end{step}

\begin{remark}\label{rm:proof}
In Step~\ref{step3}, one can readily find an isoperimetric minimizer $I \subset \Dom(\gamma)$
satisfying $\bm{\mu}_{\gamma}(I)=\theta$ and
$\bm{\mu}_{\gamma}^+(I)=\cI_{\Dom(\gamma),F_{\gamma},\bm{\mu}_{\gamma}}(\theta)$.
If $I$ is of the form $I=[a,c]$ with $\Dom(\gamma)=[a,b]$
or $I=(-\infty,c]$ with $\Dom(\gamma)=(-\infty,b]$,
then in the calculation of $\bm{\mu}_{\gamma}^+(I)$
only $d(\gamma(s),\gamma(t))=t-s$ for $s<t$ matters,
and we obtain
$\cI_{(\Dom(\gamma),F_{\gamma},\bm{\mu}_{\gamma})}(\theta) \ge \cI_{K,N,D}(\theta)$.
This condition on $I$, however, seems not true for general $(M,F,\fm)$.
\end{remark}

\subsection{Corollaries}\label{ssc:cor}

We state two special cases of Theorem~\ref{th:isop} as corollaries.

\begin{corollary}[L\'evy--Gromov's isoperimetric inequality]\label{cr:LG}
If $K>0$, $N \in [n,\infty)$ $($with $N \neq 1$ if $n=1)$
and $D \ge \pi\sqrt{(N-1)/K}$ in $Theorem~\ref{th:isop}$,
then we have
\[ \cI_{(M,F,\fm)}(\theta) \ge \Lambda_{(M,F)}^{-1} \cdot
 \frac{\bs_{K/(N-1)}(R(\theta))^{N-1}}{\int_0^{\pi\sqrt{(N-1)/K}} \bs_{K/(N-1)}(r)^{N-1} \,dr}, \]
where $R(\theta)$ is given by
\[ \int_0^{R(\theta)} \bs_{K/(N-1)}(r)^{N-1} \,dr
 =\theta \int_0^{\pi\sqrt{(N-1)/K}} \bs_{K/(N-1)}(r)^{N-1} \,dr. \]
\end{corollary}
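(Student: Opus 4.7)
The plan is to specialize Theorem~\ref{th:isop} to the present parameter range and substitute the classical L\'evy--Gromov--Bayle explicit formula for the model profile $\cI_{K,N,D}$; no new analysis is required beyond what has already been established.

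First I would invoke Theorem~\ref{th:isop} directly: under the hypotheses $\Ric_N \ge K$, $\diam M \le D$ and $\Lambda_{(M,F)} < \infty$ the inequality
\[ \cI_{(M,F,\fm)}(\theta) \ge \Lambda_{(M,F)}^{-1} \cdot \cI_{K,N,D}(\theta) \]
holds for every $\theta \in [0,1]$. Since here $K>0$ and $N \in [n,\infty)$ (with $N \neq 1$ when $n=1$), the Bonnet--Myers theorem already forces $\diam M \le \pi\sqrt{(N-1)/K}$, so the diameter assumption $D \ge \pi\sqrt{(N-1)/K}$ is automatically compatible, and the value of $D$ no longer enters the model profile.

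Next I would recall the explicit expression for $\cI_{K,N,D}$ in this regime, which was already reproduced in the introduction from \cite{Misharp,Mineg,Bay}:
\[ \cI_{K,N,D}(\theta) = \frac{\sin(\sqrt{K/(N-1)}\,R(\theta))^{N-1}}{\int_0^{\pi\sqrt{(N-1)/K}} \sin(\sqrt{K/(N-1)}\,r)^{N-1} \,dr}, \]
where $R(\theta) \in [0,\pi\sqrt{(N-1)/K}]$ is the unique solution of the corresponding volume equation.

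Finally I would rewrite the formula using the notation $\bs_\kappa$ from \eqref{eq:bs}. For $\kappa = K/(N-1) > 0$ we have $\bs_\kappa(r) = \kappa^{-1/2} \sin(\sqrt{\kappa}\, r)$, hence $\bs_\kappa(r)^{N-1} = \kappa^{-(N-1)/2} \sin(\sqrt{\kappa}\, r)^{N-1}$. The constant factor $\kappa^{-(N-1)/2}$ appears identically in numerator and denominator of the expression for $\cI_{K,N,D}(\theta)$, and likewise in both sides of the implicit equation defining $R(\theta)$, so it cancels in each case. This is precisely the statement of the corollary, so there is no genuine obstacle: the entire content sits in Theorem~\ref{th:isop}, and the work of this corollary is the (purely notational) identification of the one-dimensional model.
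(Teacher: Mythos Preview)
Your proposal is correct and matches the paper's approach exactly: the paper simply presents this corollary as a special case of Theorem~\ref{th:isop} without a separate proof, and your substitution of the explicit L\'evy--Gromov--Bayle model profile (together with the harmless cancellation of the $\kappa^{-(N-1)/2}$ factor when passing to the $\bs_{\kappa}$ notation) is precisely the intended content.
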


\begin{corollary}[Bakry--Ledoux's isoperimetric inequality]\label{cr:BL}
If $K>0$ and $N=D=\infty$ in $Theorem~\ref{th:isop}$,
then we have
\[ \cI_{(M,F,\fm)}(\theta) \ge \Lambda_{(M,F)}^{-1} \cdot
 \sqrt{\frac{K}{2\pi}} \e^{-Ka(\theta)^2/2}, \qquad
 \text{where}\ \theta=\int_{-\infty}^{a(\theta)} \sqrt{\frac{K}{2\pi}} \e^{-Ks^2/2} \,ds. \]
\end{corollary}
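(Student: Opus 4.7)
This corollary is a direct specialization of Theorem~\ref{th:isop}, so the plan is simply to record that specialization. First I would apply Theorem~\ref{th:isop} with the parameter choice $K>0$, $N=\infty$, $D=\infty$; the diameter hypothesis is vacuous when $D=\infty$, while $\fm(M)=1$ and $\Lambda_{(M,F)}<\infty$ are part of the standing assumptions of Theorem~\ref{th:isop}. This yields
\[ \cI_{(M,F,\fm)}(\theta) \ge \Lambda_{(M,F)}^{-1} \cdot \cI_{K,\infty,\infty}(\theta). \]

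Next I would invoke the explicit form of $\cI_{K,\infty,\infty}$ already recalled in the introduction, namely
\[ \cI_{K,\infty,\infty}(\theta)=\sqrt{\frac{K}{2\pi}}\,\e^{-Ka(\theta)^2/2}, \]
where $a(\theta)$ is determined by $\theta=\int_{-\infty}^{a(\theta)}\sqrt{K/(2\pi)}\,\e^{-Ks^2/2}\,ds$. This is the classical Gaussian isoperimetric profile, established in the weighted Riemannian setting by Bakry--Ledoux~\cite{BL} and in the sharp form used in Theorem~\ref{th:isop} by E.~Milman~\cite{Misharp}. Substituting this formula into the bound above gives exactly the statement of the corollary.

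For context on where the ingredients live inside the proof of Theorem~\ref{th:isop}: in the $N=\infty$ case one uses the needle log-concavity bound \eqref{eq:CDbd'} together with Lemma~\ref{lm:smooth} to see that each smoothed needle density satisfies the one-dimensional $\psi''\ge K$ condition, i.e.\ $\Ric_{\infty}\ge K$ on $(\Dom(\gamma),|\cdot|,\bm{\mu}_{\gamma})$. E.~Milman's one-dimensional isoperimetric inequality then contributes the Gaussian profile on each needle, and the factor $\Lambda_{(M,F)}^{-1}$ comes from converting the Euclidean perimeter on $\Dom(\gamma)$ back to the Finsler perimeter along $\gamma$ via $d(\gamma(t),\gamma(s))\le \Lambda_{(M,F)}(t-s)$ for $s<t$.

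There is no genuine obstacle here, since the whole content has already been absorbed into Theorem~\ref{th:isop}; the only point of care is to verify that the formula for $\cI_{K,\infty,\infty}$ cited in the introduction matches the right-hand side of the corollary verbatim, which it does.
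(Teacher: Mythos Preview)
Your proposal is correct and matches the paper's approach exactly: the paper presents this corollary without a separate proof, treating it as an immediate specialization of Theorem~\ref{th:isop} together with the explicit formula for $\cI_{K,\infty,\infty}$ recalled in the introduction. Your additional contextual remarks about \eqref{eq:CDbd'}, Lemma~\ref{lm:smooth}, and the role of $\Lambda_{(M,F)}^{-1}$ are accurate but already contained in the proof of Theorem~\ref{th:isop} itself.
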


Notice finally that, although our Finsler structures are necessarily $\cC^{\infty}$ and strongly convex,
bi-Lipschitz approximations give the following isoperimetric inequality on general normed spaces.

\begin{corollary}\label{cr:norm}
Let $n \ge 2$, $N \in (-\infty,0] \cup [n,\infty]$, $K \in \R$,
and $\|\cdot\|:\R^n \lra [0,\infty)$ be a continuous function satisfying$:$
\begin{enumerate}[{\rm (1)}]
\item $\|x\| >0$ for all $x \neq 0:= (0,\ldots,0);$
\item $\|cx\|=c\|x\|$ for any $x \in \R^n$ and $c>0;$
\item $\|x+y\| \le \|x\| +\|y\|$ for any $x,y \in \R^n$.
\end{enumerate}
Take a probability measure $\fm=\rho \fL^n$ on $\R^n$
absolutely continuous with respect to
the $n$-dimensional Lebesgue measure $\fL^n$ such that
$\rho$ is positive, continuous and satisfies$:$
\begin{equation}\label{eq:normCD}
\rho\big( (1-\lambda)x+\lambda y \big) \ge
 \big\{ \bm{\sigma}^{(1-\lambda)}_{K/(N-1)}(\|y-x\|) \rho(x)^{1/(N-1)}
 +\bm{\sigma}^{(\lambda)}_{K/(N-1)}(\|y-x\|) \rho(y)^{1/(N-1)} \big\}^{N-1}
\end{equation}
for all $x,y \in \R^n$ and $\lambda \in (0,1)$ if $N \in (-\infty,0] \cup [n,\infty);$
\begin{equation}\label{eq:normCD'}
\log\rho\big( (1-\lambda)x+\lambda y \big) \ge
 (1-\lambda) \log\rho(x) +\lambda \log\rho(y)
 +\frac{K}{2}(1-\lambda)\lambda \|y-x\|^2
\end{equation}
if $N=\infty$.
Then we have
\[ \cI_{(\R^n,\|\cdot\|,\fm)}(\theta)
 \ge \Lambda_{(\R^n,\|\cdot\|)}^{-1} \cdot \cI_{K,N,\infty}(\theta) \]
for all $\theta \in [0,1]$,
where we consider the distance function $d(x,y):=\|y-x\|$ associated with $\|\cdot\|$
and $\Lambda_{(\R^n,\|\cdot\|)}:=\sup_{x \neq 0}\|{-}x\|/\|x\|$.
\end{corollary}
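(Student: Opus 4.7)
The plan is to reduce Corollary~\ref{cr:norm} to Theorem~\ref{th:isop} via a bi-Lipschitz smooth approximation of the triple $(\R^n,\|\cdot\|,\rho\fL^n)$. First I would replace the unit ball $B:=\{v\,|\,\|v\|\le 1\}$ by a slightly dilated smooth strictly convex body $B_\ve$ with $(1-\delta_\ve)B\subset B_\ve\subset(1+\delta_\ve)B$ and $\delta_\ve\downarrow 0$ (for instance by convolving $\chi_B$ with a radial Euclidean mollifier and then taking a suitable sublevel set), and let $F_\ve$ be the Minkowski functional of $B_\ve$; this is a $\cC^\infty$-strongly convex Minkowski norm satisfying $(1-\delta_\ve)\|\cdot\|\le F_\ve\le(1+\delta_\ve)\|\cdot\|$, and its reversibility constant converges to $\Lambda_{(\R^n,\|\cdot\|)}$. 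Next I would smooth the density following Lemma~\ref{lm:smooth}: set $\rho_\ve:=(\rho^{1/(N-1)}\ast\phi_\ve)^{N-1}$ in the finite-$N$ case and $\rho_\ve:=\exp(\log\rho\ast\phi_\ve)$ when $N=\infty$, using a smooth Euclidean mollifier $\phi_\ve$. The same Jensen-type computation as in Lemma~\ref{lm:smooth}(ii), applied along straight segments of $\R^n$, shows that $\rho_\ve$ inherits \eqref{eq:normCD} (resp.\ \eqref{eq:normCD'}) with $\|\cdot\|$ and the same $K,N$; after renormalizing to a probability density, $\rho_\ve\fL^n$ becomes a positive $\cC^\infty$-probability measure with $\rho_\ve\to\rho$ locally uniformly.

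Next I would transfer the curvature-dimension condition from $\|\cdot\|$ to $F_\ve$. Straight segments are minimal geodesics both for $\|\cdot\|$ and for $F_\ve$, so conditions \eqref{eq:CD}--\eqref{eq:CDneg} for $F_\ve$ differ from \eqref{eq:normCD} only in that the argument of $\bm{\sigma}^{(\lambda)}_{K/(N-1)}$ is $F_\ve(y-x)$ rather than $\|y-x\|$. Using the scaling identity $\bm{\sigma}^{(\lambda)}_\kappa(cr)=\bm{\sigma}^{(\lambda)}_{c^2\kappa}(r)$ and the monotonicity of $\bm{\sigma}^{(\lambda)}_\kappa$ in $\kappa$, the factor $F_\ve(y-x)/\|y-x\|\in[1-\delta_\ve,1+\delta_\ve]$ can be absorbed into a shifted curvature parameter $K_\ve$ with $K_\ve\to K$ (the sign of the shift depending on the signs of $K$ and $N-1$). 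By Theorem~\ref{th:CD} this means that $(\R^n,F_\ve,\rho_\ve\fL^n)$ satisfies $\Ric_N\ge K_\ve$, and the triple meets all hypotheses of Theorem~\ref{th:isop} with $D=\infty$, so I obtain
\[ \cI_{(\R^n,F_\ve,\rho_\ve\fL^n)}(\theta_\ve)\ge \Lambda_{(\R^n,F_\ve)}^{-1}\cdot \cI_{K_\ve,N,\infty}(\theta_\ve) \]
for every $\theta_\ve\in[0,1]$.

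Finally I would pass to the limit $\ve\downarrow 0$. Given a Borel set $A\subset\R^n$ with $(\rho\fL^n)(A)=\theta$, the bi-Lipschitz bound on $F_\ve$ sandwiches the $F_\ve$-neighborhood $B^+(A,r)$ between the $\|\cdot\|$-neighborhoods of radii $r/(1\pm\delta_\ve)$, while $\rho_\ve\to\rho$ locally uniformly; combined with $(\rho_\ve\fL^n)(A)\to\theta$, $\Lambda_{(\R^n,F_\ve)}\to\Lambda_{(\R^n,\|\cdot\|)}$, $K_\ve\to K$, and continuity of $\cI_{K,N,\infty}$ in its parameters, these ingredients yield the desired bound. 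The main technical obstacle is the interchange of the two limits hidden in $\fm^+$: since the Minkowski boundary measure is defined as a $\liminf$ in $r\downarrow 0$, swapping it with $\ve\downarrow 0$ is not automatic. I would address this by working with finite-scale difference quotients $\fm(B^+(A,r))/r$, applying Fatou in $\ve$ first and then letting $r\downarrow 0$, and exploiting that small perturbations of the norm only shift the relevant neighborhood by a factor close to $1$.
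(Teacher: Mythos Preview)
Your overall strategy matches the paper's (approximate $\|\cdot\|$ by a smooth Minkowski norm, invoke Theorem~\ref{th:isop}, let $\ve\downarrow 0$), but the transfer step contains a real gap. Condition \eqref{eq:normCD} is \emph{not} $\CD(K,N)$ and does not imply $\Ric_N\ge K_\ve$ for $n\ge 2$: along a unit-speed line, \eqref{eq:normCD} reads $\psi''-(\psi')^2/(N-1)\ge K$ (with $\rho=\e^{-\psi}$), whereas on a flat Minkowski space $\Ric_N\ge K$ reads $\psi''-(\psi')^2/(N-n)\ge K$, which is strictly stronger except when $N=\infty$. So your appeal to Theorem~\ref{th:CD} fails. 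The paper never claims a Ricci bound either; it only records that $(\R^n,\|\cdot\|_\ve,\fm)$ still satisfies \eqref{eq:normCD} with $K$ replaced by $\min\{K,(1+\ve)^{-2}K\}$ and then invokes (the proof of) Theorem~\ref{th:isop}, in which the curvature hypothesis is used solely to obtain the needle inequality \eqref{eq:CDbd}. Since needles in a Minkowski space are straight segments, \eqref{eq:normCD} furnishes \eqref{eq:CDbd} directly, and the localization argument runs unchanged.

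Two smaller points where the paper's execution is sharper. A sublevel set of $\chi_B*\phi_\ve$ is smooth but need not be strongly convex (take $B$ a cube); the paper instead mollifies the norm with scale $\delta|x|$ to preserve homogeneity, then sets $\|x\|_\ve:=\sqrt{(\|x\|'_\delta)^2+\delta|x|^2}$, the added Euclidean term enforcing the strong convexity required in Definition~\ref{df:Fstr}. And the paper's approximation is one-sided, $\|x\|\le\|x\|_\ve\le(1+\ve)\|x\|$; then $d_\ve\ge d$ gives $B^+_\ve(A,r)\subset B^+(A,r)$ and hence $\fm^+(A)\ge\fm^+_\ve(A)$ for each fixed $\ve$, so the limit $\ve\downarrow 0$ is immediate and the interchange-of-limits issue you worry about never arises.
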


\begin{proof}
We first see that,
for any $\ve>0$, there exists a Minkowski norm $\|\cdot\|_{\ve}$
$($in the sense of Definition~$\ref{df:Fstr})$ with
$\|x\| \le \|x\|_{\ve} \le (1+\ve) \|x\|$ for all $x \in \R^n$.
To this end, we modify $\|\cdot\|$ in two steps.
Denote by $|\cdot|$ the Euclidean norm of $\R^n$.
We first employ a rotationally symmetric mollifier $\phi \in \cC^{\infty}(\R^n)$
such that $\supp\phi$ is included in the unit ball with respect to $|\cdot|$
and $\int_{\R^n} \phi \,d\fL^n=1$.
For $\delta>0$ and $y \in \R^n$, we define
$\phi_{\delta}(y):=\phi(\delta^{-1} \cdot y)/\delta^n$ and
\[ \|x\|'_{\delta} :=\int_{\R^n} \|x-y\| \phi_{\delta|x|}(y) \,\fL^n(dy) \]
for $x \in \R^n \setminus \{0\}$, and $\|0\|'_{\delta}:=0$.
Then $\|\cdot\|'_{\delta}$ is $\cC^{\infty}$ on $\R^n \setminus \{0\}$ and
\[ \|\cdot\| \le \|\cdot\|'_{\delta} \le \|\cdot\|'_{\delta'} \qquad
 \text{for}\ 0<\delta<\delta'. \]
Using this monotonicity we also observe the convexity of $\|\cdot\|'_{\delta}$.
Next we consider
\[ \|x\|''_{\delta} :=\sqrt{(\|x\|'_{\delta})^2 +\delta|x|^2}, \qquad x \in \R^n. \]
Clearly $\|\cdot\|''_{\delta}$ is strongly convex and enjoys $\|\cdot\|'_{\delta} \le \|\cdot\|''_{\delta}$.
Choosing small $\delta>0$ and letting $\|\cdot\|_{\ve}:=\|\cdot\|''_{\delta}$ shows the claim.

The space $(\R^n,\|\cdot\|_{\ve},\fm)$ satisfies
\eqref{eq:normCD} or \eqref{eq:normCD'} by replacing $K$ with $\min\{K,(1+\ve)^{-2}K\}$.
Therefore the assertion follows from Theorem~\ref{th:isop} by letting $\ve \downarrow 0$.
$\qedd$
\end{proof}

We remark that
the above proof heavily relies on the special properties of normed spaces;
The bi-Lipschitz deformations $\|\cdot\|_{\ve}$ still have straight lines as geodesics,
and they do not destroy the curvature bounds \eqref{eq:normCD}, \eqref{eq:normCD'}.
Hence it is unclear if one can generalize Theorem~\ref{th:isop} to
non-smooth and non-strongly convex Finsler structures.

\section{Further problems}\label{sc:prob}

This final section is devoted to further problems on isoperimetric inequalities on Finsler manifolds.
We refer to \cite[\S 6]{Kl} for other kinds of open problems related to needle decompositions.
(The first problem suggested in \cite[\S 6]{Kl} is actually a generalization to Finsler manifolds,
which is resolved by \cite{CM} and the present article.)

\begin{enumerate}[(A)]
\item
\emph{$N \in (0,1)$ and $N=1$ cases.}
Because of recent works \cite{Mineg,Kl,Wy3} on weighted Riemannian manifolds
with $\Ric_N \ge K$ for $N \in (0,1)$ and even $N=1$,
it is natural to try to generalize the entropy-based curvature-dimension condition $\CD(K,N)$
to $N \in (0,1)$ and $N=1$.
It seems necessary to modify the entropy $S_N$,
since the generating function $h(s)=s^{(N-1)/N}$ is still convex
but $h(0) \neq 0$ for $N \in (0,1)$.

\item
\emph{Concavity of isoperimetric profile.}
In the weighted Riemannian case,
we have a more precise control of the isoperimetric profile.
Under $\Ric_N \ge K$, the differential inequality
\begin{equation}\label{eq:Icon}
\left( \cI_{(M,g,\fm)}^{N/(N-1)} \right)'' \le -\frac{KN}{N-1} \cI_{(M,g,\fm)}^{(2-N)/(N-1)}
\end{equation}
holds in the weak sense (see \cite{Bay,BR,Mo}).
Then, together with the asymptotic analysis at $\theta \to 0$,
we have the sharp isoperimetric inequality.
The standard technique showing this inequality uses the regularity theorem
of isoperimetric minimizers, that is not yet established in the Finsler situation.
If it is generalized, then one could follow the lines of the Riemannian case
(even in the non-reversible case).
Alternatively, it is very interesting if one can derive \eqref{eq:Icon}
by using needle decompositions.

\item
\emph{Asymmetric $\CD$-spaces.}
As a general framework including both $\CD$-spaces in \cite{CM} and
non-reversible Finsler manifolds in the present article,
it would be worthwhile to consider asymmetric metric measure spaces
satisfying the curvature-dimension condition.
Many results could be generalized verbatim,
while we need a careful treatment on the relation between $d(x,y)$ and $d(y,x)$,
they are always locally comparable in the Finsler case by virtue of the smoothness of Finsler structures.

\item
\emph{Other curvature bounds.}
Our construction of needle decompositions used only the manifold structure of the space
and the curvature bound $\Ric_N \ge K$ came into play only from \S \ref{ssc:quan}.
Thus there would be applications of this technique to other curvature bounds,
for instance, Hadamard manifolds
(simply connected Riemannian manifolds of nonpositive sectional curvature)
or its Finsler counterparts.
Then, since needles are weighted spaces (even if the original manifold is unweighted),
we need to develop the theory of sectional curvature for weighted spaces.
This is not as well understood as the weighted Ricci curvature
(we refer to \cite{Wy1,KW,Wy2} for recent attempts in such a direction).

\item
\emph{Rigidity.}
In \cite{CM}, some rigidity results for the isoperimetric inequality
in positively curved spaces were obtained with the help of
the maximal diameter rigidity in \cite{Ke} for metric measure spaces satisfying
the Riemannian curvature-dimension condition.
In the Finsler setting, rigidity is a more challenging problem
and we know only a few including the following.
\begin{enumerate}[(a)]
\item
The maximal diameter under $\Ric_N \ge K>0$, $N \in [n,\infty)$,
implies the spherical suspension structure.
This is essentially included in the framework of the measure contraction property (see \cite{Omcp2}).

\item
Under $\Ric_N \ge 0$ with $N \in [n,\infty]$, the existence of a straight line implies that
the space splits off the real line $\R$ (see \cite{Osplit}).
This is a generalization of Cheeger--Gromoll's classical \emph{splitting theorem}.
\end{enumerate}

Now, it would be interesting to consider rigidity problems, for instance,
in the setting of Corollary~\ref{cr:LG} for reversible spaces.

\item
\emph{Semigroup approach to Bakry--Ledoux's inequality.}
Bakry--Ledoux's isoperimetric inequality \cite{BL}
was shown in the more abstract framework of linear semigroups
satisfying an inequality corresponding to the dimension-free Bochner inequality
(this is the original ``curvature-dimension condition'' $\CD(K,\infty)$).
Applying this result to the weighted Laplacian
on weighted Riemannian manifolds gives the Riemannian case of Corollary~\ref{cr:BL}.
In the Finsler case, although the natural Laplacian is not linear (see \cite{OShf}),
a kind of Bochner--Weitzenb\"ock formula was established in \cite{OSbw}
and has a number of applications including the aforementioned splitting theorem in \cite{Osplit}.
It seems worthwhile to consider whether this alternative approach works or not,
that may improve the estimate in Corollary~\ref{cr:BL}.
\end{enumerate}
\medskip

{\it Note added in proof.}
In our subsequent paper \cite{Oisop},
the sharp isoperimetric inequality under $\Ric_{\infty} \ge K>0$
is resolved along the lines of Bakry--Ledoux's $\Gamma$-calculus approach,
exactly suggested in (F) above.
A similar technique further yields several functional inequalities
on non-reversible Finsler manifolds (\cite{Ofunc}, see also the survey \cite{Onlga}),
as well as Bakry--Ledoux's isoperimetric inequality on $\RCD(K,\infty)$-spaces
with $K>0$ (\cite{AM}, note that in \cite{CM} they assumed $N \in (1,\infty)$).

\renewcommand{\refname}{{\large References}}
{\small

}

\end{document}